
\documentclass[11pt]{article}

\usepackage{geometry}
\usepackage{latexsym}
\usepackage{mathrsfs}
\usepackage{amsfonts}
\usepackage{amssymb}
\usepackage{subfigure}    
\usepackage{graphicx}
\usepackage{epstopdf}
\usepackage{float}
\usepackage{bm}
\usepackage{amsmath}
\usepackage{amsthm}
\usepackage{color}
\usepackage[subnum]{cases}
\usepackage{rotating}
\usepackage{enumitem}
\usepackage{accents}
\usepackage[ruled]{algorithm2e}
\usepackage[title]{appendix}
\usepackage{mathtools}
\usepackage{caption}
\usepackage{url}
\usepackage{longtable}
\allowdisplaybreaks


\usepackage{multirow}
\usepackage{multicol}
\usepackage[usestackEOL]{stackengine}
\usepackage{comment}


\geometry{left=2.8cm,right=2.8cm,top=2.5cm,bottom=2.5cm}

\makeatletter
\@addtoreset{equation}{section}
\makeatother

\makeatletter
\def \wideubar{\underaccent{{\cc@style\underline{\mskip15mu}}}}
\def \widebar{\accentset{{\cc@style\underline{\mskip10mu}}}}
\makeatother

\definecolor{blue}{rgb}{0,0,0.9}
\definecolor{red}{rgb}{0.9,0,0}
\definecolor{green}{rgb}{0,0.9,0}
\definecolor{brown}{rgb}{0.6,0.1,0.1}
\definecolor{lightgreen}{rgb}{0.1,0.5,0.1}
\newcommand{\blue}[1]{#1}

\newcommand{\red}[1]{\begin{color}{red}#1\end{color}}

\usepackage[colorlinks=true,
breaklinks=true,
bookmarks=true,
urlcolor=blue,
citecolor=lightgreen,
linkcolor=lightgreen,
bookmarksopen=false,
draft=false]{hyperref}

\graphicspath{{figures/}}

\newcommand{\abs}[1]{\left\lvert #1 \right\rvert}
\newcommand{\argmin}[1]{\underset{#1}{\arg\min}\,}

\newcommand{\Diag}{\mathtt{Diag}}

\newcommand{\inner}[1]{\left\langle #1 \right\rangle }

\newcommand{\norm}[1]{ \left\| #1 \right\|}

\newcommand{\prox}{\mathtt{prox}}

\renewcommand{\Vec}[1]{\mathtt{vec}\left(#1\right)}

\begin{document}

\newtheorem{property}{Property}[section]
\newtheorem{proposition}{Proposition}[section]
\newtheorem{append}{Appendix}[section]
\newtheorem{definition}{Definition}[section]
\newtheorem{lemma}{Lemma}[section]
\newtheorem{corollary}{Corollary}[section]
\newtheorem{theorem}{Theorem}[section]
\newtheorem{remark}{Remark}[section]
\newtheorem{problem}{Problem}[section]
\newtheorem{example}{Example}
\newtheorem{assumption}{Assumption}
\renewcommand*{\theassumption}{\Alph{assumption}}

\title{A Corrected Inexact Proximal Augmented Lagrangian Method with a Relative Error Criterion for a Class of Group-quadratic Regularized Optimal Transport Problems}
\author{
Lei Yang\thanks{School of Computer Science and Engineering, and Guangdong Province Key Laboratory of Computational Science, Sun Yat-Sen University ({\tt yanglei39@mail.sysu.edu.cn}). The research of this author is supported in part by the National Natural Science Foundation of China under grant 12301411, and the Natural Science Foundation of Guangdong under grant 2023A1515012026.},~~
Ling Liang\thanks{(Corresponding author) Department of Mathematics, University of Maryland at College Park ({\tt liang.ling@u.nus.edu}).},~~
Hong T.M. Chu\thanks{Department of Mathematics, National University of Singapore ({\tt hongtmchu@u.nus.edu}).},~~
Kim-Chuan Toh\thanks{Department of Mathematics, and Institute of Operations Research and Analytics, National University of Singapore, Singapore 119076 ({\tt mattohkc@nus.edu.sg}).}
}
\date{}
\maketitle

\begin{abstract}
The optimal transport (OT) problem and its related problems have attracted significant attention and have been extensively studied in various applications. In this paper, we focus on a class of group-quadratic regularized OT problems which aim to find solutions with specialized structures that are advantageous in practical scenarios. To solve this class of problems, we propose a corrected inexact proximal augmented Lagrangian method (ciPALM), with the subproblems being solved by the semi-smooth Newton ({\sc Ssn}) method. We establish that the proposed method exhibits appealing convergence properties under mild conditions. Moreover, our ciPALM distinguishes itself from the recently developed semismooth Newton-based inexact proximal augmented Lagrangian ({\sc Snipal}) method for linear programming. Specifically, {\sc Snipal} uses an absolute error criterion for the approximate minimization of the subproblem for which a summable sequence of tolerance parameters needs to be pre-specified for practical implementations. In contrast, our ciPALM adopts a relative error criterion with a \textit{single} tolerance parameter, which would be more friendly to tune from computational and implementation perspectives. These favorable properties position our ciPALM as a promising candidate for tackling large-scale problems. Various numerical studies validate the effectiveness of employing a relative error criterion for the inexact proximal augmented Lagrangian method, and also demonstrate that our ciPALM is competitive for solving large-scale group-quadratic regularized OT problems. 

\vspace{5mm}
\noindent {\bf Keywords:} Optimal transport; group-quadratic regularizer; proximal augmented Lagrangian method; relative error criterion

\noindent {\bf AMS subject classifications.} 90C05, 90C06, 90C25

\end{abstract}

\section{Introduction}

Optimal transport (OT), which provides an effective computational tool to compare two probability distributions, has gained increasing attention in a wide range of application areas such as computer vision \cite{rtg2000earth}, data analytics \cite{courty2014domain,courty2016optimal}, and machine learning \cite{arjovsky2017wasserstein,blondel2018smooth}. In contrast to other popular information divergences (e.g., Euclidean, Kullback-Leibler, Bregman) which typically perform a direct pointwise comparison of two distributions, OT aims to quantify the minimal effort of transferring one probability distribution to another by solving an optimization problem with a properly specified cost function. Mathematically, given two weight vectors $\bm{\alpha}:=(\alpha_1,\cdots,\alpha_m)\in\mathbb{R}^m_+$, $\bm{\beta}:=(\beta_1,\cdots,\beta_n)\in\mathbb{R}^n_+$, and two sets of support points $\big\{\bm{p}_1,\cdots,\bm{p}_m\big\}\subset\mathbb{R}^d$, $\big\{\bm{q}_1,\cdots,\bm{q}_n\big\}\subset\mathbb{R}^{{d}}$, we consider two discrete distributions $\bm{\mu}=\sum_{i=1}^{m}\alpha_i\delta_{\bm{p}_i}$ and $\bm{\nu}=\sum_{j=1}^{n}\beta_j\delta_{\bm{q}_j}$, where $\delta_{\bm{p}_i}$ (resp. $\delta_{\bm{q}_j}$) denotes the Dirac function at the point $\bm{p}_i$ (resp. $\bm{q}_j$). The discrete OT problem is then given as follows:
\begin{equation}\label{eq-DOT}
\min_{X\in \mathbb{R}^{m\times n}}\quad \inner{C,X} \quad \mathrm{s.t.} \quad X\bm{1}_n = \bm{\alpha}, ~X^{\top}\bm{1}_m = \bm{\beta}, ~X\geq 0,
\end{equation}
where $C\in\mathbb{R}^{m \times n}$ is a given cost matrix and $\bm{1}_n$ (resp. $\bm{1}_m$) denotes the vector of all ones in $\mathbb{R}^n$ (resp. $\mathbb{R}^m$). Problem \eqref{eq-DOT} was originally formulated by Kantorovich \cite{kantorovich1942translocation} via relaxing the Monge OT problem \cite{monge1781memoire} and is now well-known as the Monge-Kantorovich OT problem; we refer readers to \cite{v2008optimal} for a historical review. In the particular case when $C_{ij} = \norm{\bm{p}_i - \bm{q}_j}^p$ with $p \geq 1$ for $i = 1,\cdots, m$ and $j = 1, \cdots, n$, the value $(\mathcal{W}_p(\bm{\mu}, \bm{\nu}))^{1/p}$ defines the famous $p$-Wasserstein distance between $\bm{\mu}$ and $\bm{\nu}$, where $\mathcal{W}_p(\bm{\mu}, \bm{\nu})$ denotes the optimal objective function value of problem \eqref{eq-DOT}; see \cite[Chapter 6]{v2008optimal} for more details.
Since OT can capture the underlying geometry structures via constructing the cost matrix $C$ in \eqref{eq-DOT}, it usually provides a more robust comparison tool for the probability distributions. This underlies many recent practical successes of OT and its various generalizations such as the Wasserstein distributionally robust optimization problem \cite{kuhn2019wasserstein}.

Following the wave of research on OT, in this paper, we consider a class of group-quadratic regularized OT problems that can be formulated as follows:
\begin{equation}\label{eq-regOTpro}
\min_{X\in\mathbb{R}^{m\times n}} \inner{C,X} + \mathcal{R}(X) \quad \mathrm{s.t.} \quad X \in \mathscr{T}.
\end{equation}
Here, $\mathcal{R}:\mathbb{R}^{m\times n} \rightarrow \mathbb{R}$ is a proper closed convex regularizer taking the following form:
\begin{equation}\label{eq:R}
\mathcal{R}(X) \coloneqq \lambda_1 \sum_{G\in\mathcal{G}} \omega_G \norm{\bm{x}_G} + \frac{\lambda_2}{2}\norm{X}_F^2,
\end{equation}
where $\lambda_1, \lambda_2\geq0$ are regularization parameters, $\mathcal{G}$ is a partition of the index set $\{1,\dots, m\}\times \{1,\dots, n\}$ satisfying that $G\neq \emptyset$ for any $G\in \mathcal{G}$, $G\cap G'=\emptyset$ for any $G, \,G'\in\mathcal{G}$, and $\cup_{G\in\mathcal{G}}\, G=\{1,\dots, m\}\times \{1,\dots, n\}$, $\omega_G \geq 0$ is a weight scalar associated with the group $G$, $\bm{x}_{G}\in\mathbb{R}^{|G|}$ denotes the vector formed from a matrix $X$ by picking the entries indexed by $G$, and $\norm{\bm{x}_G}$ and $\norm{X}_F$ denote the Euclidean norm of $\bm{x}_G$ and the Frobenius norm of $X$, respectively. Moreover, $\mathscr{T}\subseteq\mathbb{R}^{m\times n}$ is a nonempty convex feasible set taking the following form:
\begin{equation}\label{eq-cT}
\mathscr{T}:= \left\{X\in\mathbb{R}^{m\times n}\,:\, AXB = S, ~\bm{\alpha} - X\bm{1}_n\in \mathcal{K}_r, ~\bm{\beta} - X^{\top}\bm{1}_m \in \mathcal{K}_c, ~ X\geq 0 \right\},
\end{equation}
where $A\in \mathbb{R}^{\widetilde m\times m}$, $B\in \mathbb{R}^{n\times \widetilde n}$ and $S\in \mathbb{R}^{\widetilde m\times \widetilde n}$ are given matrices, and $\mathcal{K}_r\subseteq \mathbb{R}^m$ and $ \mathcal{K}_c\subseteq \mathbb{R}^n$ are two convex cones which are typically chosen as the zero spaces or the nonnegative orthants. One can verify that the following constraint sets usually used in the literature readily fall into the form of \eqref{eq-cT} with proper choices of $A$, $B$, $S$, $\mathcal{K}_r$, and $\mathcal{K}_c$:
\begin{itemize}
\item[{\tt [T1]}] The classical OT constraint set $\mathscr{T}:=\big\{X\in\mathbb{R}^{m\times n}: X\bm{1}_n=\bm{\alpha}, ~X^{\top}\bm{1}_m=\bm{\beta}, ~X\geq 0\big\}$;

\item[{\tt [T2]}] The partial OT constraint set $\mathscr{T}:=\big\{X\in\mathbb{R}^{m\times n}: \bm{1}_m^\top X \bm{1}_n = s,  ~ X\bm{1}_n \leq \bm{\alpha}, ~X^{\top}\bm{1}_m\leq\bm{\beta}, ~X\geq0\big\}$ provided that $0 < s \leq \min\big\{\sum_{i=1}^m {\alpha}_i, \,\sum_{j=1}^n{\beta}_j\big\}$;

\item[{\tt [T3]}] The martingale OT constraint set $\mathscr{T}:=\big\{X\in\mathbb{R}^{m\times n}: XQ=\Diag(\bm{\alpha})P, ~ X\bm{1}_n=\bm{\alpha}, ~X^{\top}\bm{1}_m=\bm{\beta}, ~X\geq0\big\}$, where $P:=[\bm{p}_1,\cdots,\bm{p}_m]^{\top}\in\mathbb{R}^{m\times d}$, $Q:=[\bm{q}_1,\cdots,\bm{q}_n]^{\top}\in\mathbb{R}^{n\times d}$, and $\Diag(\bm{\alpha})$ denotes the diagonal matrix whose $i$th diagonal entry is $\alpha_i$.
\end{itemize}

Problem \eqref{eq-regOTpro} covers the Monge-Kantorovich OT problem \eqref{eq-DOT} and its several popular variants in the literature. First, when $\lambda_1=\lambda_2=0$ (namely, the unregularized case), problem \eqref{eq-regOTpro} has been studied in \cite{alfonsi2019sampling,beiglbock2016problem,benamou2015iterative,caffarelli2010free,figalli2010optimal,guo2019computational,de2018entropic} under different mass transport constraints. It is known that the classical OT constraint set \texttt{[T1]}
enforces that the amount of mass $\alpha_i$ at location $\bm{p}_i$ in the source distribution is \textit{fully} assigned and location $\bm{q}_j$ in the target distribution collects exactly the amount of mass $\beta_j$.
However, one significant limitation of this constraint set is that it imposes a mass conservation requirement, necessitating that the source distribution $\bm{\mu}$ and the target distribution $\bm{\nu}$ must have identical total mass, which may not be achievable in real-world scenarios. To relax such a requirement and to avoid the normalization which might amplify some artifacts,
the partial OT constraint set  \texttt{[T2]}
can be employed; see, for example, \cite{benamou2015iterative,caffarelli2010free,figalli2010optimal}. Compared with \texttt{[T1]}, \texttt{[T2]} allows that only a fraction of mass would be transported to the target distribution, and hence is more flexible to fit different practical circumstances to achieve better empirical performances. Moreover, one may also impose other constraints on the transportation plan to tailor the resulting model for specific applications. For instance, the martingale OT problem, as an important variant of the Monge-Kantorovich OT problem \eqref{eq-DOT}, has been studied recently as the dual problem of the robust superhedging of exotic options in mathematical finance; see, for example, \cite{alfonsi2019sampling,beiglbock2016problem,guo2019computational,de2018entropic}.  It additionally assumes that random variables $\mathscr{X}$ and $\mathscr{Y}$ associated with probability distributions $\bm{\mu}$ and $\bm{\nu}$ form a martingale sequence satisfying $\mathbb{E}[\mathscr{Y}| \mathscr{X} ] = \mathscr{X}$. In the discrete setting, this condition can be reformulated as $\sum_{j=1}^n X_{ij}\bm{q}_j = {\alpha}_i\bm{p}_i$ for all $i=1,\dots,m$, as in the constraint set \texttt{[T3]}; we refer readers to
\cite[Chaper 4]{durrett2019probability} for more details on martingales.

The rationale that underlines the relevance and usefulness of introducing a nontrivial regularizer $\mathcal{R}$ in \eqref{eq-regOTpro} stems from both the algorithmic aspect and the modeling aspect. Indeed, a proper choice of $\mathcal{R}$ may lead to a computationally more tractable regularized problem. A representative example is the entropy regularizer $\mathcal{R}(X):=\lambda\mathfrak{e}(X)$ with $\mathfrak{e}(X):=\lambda\sum_{i=1}^{m}\sum_{j=1}^{n}X_{ij}(\log X_{ij}-1)$ and $\lambda>0$. Here the resulting entropic regularized problem can be efficiently solved by, for example, Sinkhorn's algorithm or more generally the Bregman iterative projection algorithm for \texttt{[T1]} \cite{benamou2015iterative,cuturi2013sinkhorn} or for \texttt{[T2]} \cite{de2018entropic}, Newton's method for \texttt{[T1]} \cite{brauer2017sinkhorn}, and the Dykstra's algorithm with Kullback-Leibler projections for  \texttt{[T3]} \cite{benamou2015iterative}, in order to obtain an approximate solution within a favorable computational complexity (see also, e.g., \cite{awr2017near,d2018computational,lin2022efficiency}). Meanwhile, many other convex regularizers have also been shown to admit such computational advantages \cite{dessein2018regularized,d2018computational,essid2018quadratically,lorenz2021quadratically}.
The underlying idea is that a proper regularizer $\mathcal{R}$ can define a strongly convex problem \eqref{eq-regOTpro} so that the corresponding dual problem admits a smooth objective possibly with some simple and well-structured constraints. Hence, the regularized problem can be readily solved by many well-developed algorithms. In addition, a convex regularizer can help to induce a solution with desired structures to fit different applications, and hence improve the effectiveness and robustness of the model in practice.
For example, the entropy regularizer encourages a smooth solution with a spread support \cite{benamou2015iterative,cuturi2013sinkhorn,cp2016a}; the quadratic regularizer can maintain the sparsity of the solution \cite{blondel2018smooth,essid2018quadratically,lorenz2021quadratically}; a special variation regularizer helps to remove colorization artifacts \cite{fppa2014regularized}; the group regularizer enables one
to incorporate the label information \cite{courty2014domain,courty2016optimal}; the Laplacian regularizer can encode the neighborhood similarity between samples \cite{flamary2014optimal}. The aforementioned potential advantages of regularization motivate the study of various regularized OT problems.

\blue{In this paper, we are particularly interested in the group-quadratic regularizer $\mathcal{R}$ given as \eqref{eq:R} and consider problem \eqref{eq-regOTpro}. As outlined above, problem \eqref{eq-regOTpro} encompasses the classical OT problem along with several significant variants, including the partial/martingale OT problem, the quadratic regularized OT problem, the group regularized OT problem, and others. All these models have been studied in the literature and have shown considerable potential in a range of applications such as image retrieval \cite{rtg2000earth}, domain adaptation \cite{courty2014domain,courty2016optimal}, color transfer \cite{blondel2018smooth,bonneel2019spot}, human activity recognition \cite{lu2021cross}, object and face recognition \cite{montesuma2021wasserstein}, finance and economics \cite{bhp2013model,guo2019computational}, and so on.}
Moreover, compared with \cite{courty2014domain,courty2016optimal} where the entropy regularizer is used together with the group-sparsity regularizer (thereby leading to completely dense solutions), the regularizer in \eqref{eq:R} can take into account prior group structures while still promoting sparsity of $X$. On the other hand, compared with \cite{blondel2018smooth} which also considered \eqref{eq:R}, the quadratic term in our paper is optional (namely, $\lambda_2$ can be set to 0), and by using the notation $\bm{x}_G$ as in \eqref{eq:R}, elements in a group can also be arbitrarily selected from $X$. Moreover, existing solution methods used in \cite{blondel2018smooth,courty2014domain,courty2016optimal,essid2018quadratically,lorenz2021quadratically} \textit{fully} rely on the strong convexity of the objective and hence cannot be easily extended to the case of solely using the group-sparsity regularizer (namely, \eqref{eq:R} with $\lambda_2=0$).

When it comes to the solution methods for solving problem \eqref{eq-regOTpro}, to the best of our knowledge, most existing works only focused on the classical OT constraint set \texttt{[T1]} together with the quadratic regularizer
or group-quadratic regularizer, and proposed to use the accelerated gradient descent (APG) method \cite{d2018computational} or Newton-type methods \cite{blondel2018smooth,essid2018quadratically,li2020efficient,lorenz2021quadratically} for solving a certain dual problem.
However, APG would suffer from the slow convergence speed when the regularization parameter is small, and Newton-type methods should require a certain nondegeneracy condition to guarantee a fast convergence rate, which is uncheckable and may not be satisfied in practice. Note that, for the unregularized case, problem \eqref{eq-regOTpro} under the constraint set $\mathcal{T}$ is essentially a linear programming (LP) problem.
However, the problem size can be huge when the dimension of the distribution ($m$ or $n$) is large. Thus, classical LP methods such as the simplex and interior point methods are no longer efficient enough or consume excessive computational resources when solving such large-scale LP problems.
This could limit the potential applicability of OT and its various generalizations. Note also that in such an LP problem, the number of variables is typically much larger than the number of linear constraints. To efficiently solve this kind of LP problems, Li, Sun, and Toh \cite{li2020asymptotically} recently proposed to apply a semismooth Newton-based inexact proximal augmented Lagrangian ({\sc Snipal}) method.
The proposed {\sc Snipal} is shown to have a much better performance in comparison to current state-of-the-art LP solvers. But, to guarantee the global convergence and the asymptotic superlinear convergence rate of the proposed algorithm, the {\sc Snipal} subproblems have to be solved approximately under an \textit{absolute} error criterion for which a summable sequence of error tolerances must be pre-specified.
Consequently, one generally needs to perform hyperparameter tuning of the sequence to achieve superior convergence performances. This might be less friendly to users in practice. We refer readers to Section \ref{sec:PALM} for more detailed discussions. This also motivates us to seek a possibly simpler inexact error criterion for the augmented Lagrangian subproblems so that the appealing convergence properties can be preserved in both theoretical and numerical aspects, and meanwhile, the task of hyperparameter tunings can also be simplified.

In view of the above, in this paper, we attempt to develop a unified algorithmic framework for efficiently solving problem \eqref{eq-regOTpro} with $\mathcal{R}$ chosen as \eqref{eq:R} and $\mathscr{T}$ chosen as \eqref{eq-cT}, \blue{aiming to achieve a reasonable level of accuracy with less computational resources.} To this end, we first rewrite the problem in a unified manner and derive its dual problem in Section \ref{sec:PALM}. We then apply a corrected inexact proximal augmented Lagrangian method (ciPALM) in Algorithm \ref{algo:ciPALM} to solve the resulting dual problem and show that our ciPALM is in fact an application of a variable metric hybrid proximal extragradient (VHPE) method in Algorithm \ref{algo:VHPE}. Hence, the convergence properties of the ciPALM can be obtained as a direct application of the general theory for the VHPE as presented in Section \ref{sec:VHPE}. Further, in Section \ref{sec:SSN}, we apply a semismooth Newton method ({\sc Ssn}), which is
a second-order method that has a fast superlinear (or even quadratic) convergence rate, to solve the ciPALM subproblem \eqref{ciPALM-subpro}. We emphasize that the second-order sparsity structure of the problem is fully uncovered and exploited to significantly reduce the computational cost of solving the semismooth Newton systems. Various numerical experiments conducted in Section \ref{sec:num} demonstrate that the proposed ciPALM with {\sc Ssn} as a subsolver is efficient for solving problem \eqref{eq-regOTpro} with different choices of $\mathcal{R}$ and $\mathscr{T}$. Note that our ciPALM shares a similar algorithmic framework as the {\sc Snipal} in \cite{li2020asymptotically}.
However, we should point out that the {\sc Snipal} is specifically developed for solving the linear programming problems, while our ciPALM is tailored to problem \eqref{eq-regOTpro}, involving an additional group-quadratic regularizer \eqref{eq:R}.
Moreover, we have also made an essential change to the algorithm by introducing a more practical relative error criterion \eqref{inexcond-ciALM} for solving the subproblem \eqref{ciPALM-subpro} which requires an extra correction step in \eqref{extrastep-ciALM} to guarantee the convergence. It turns out that our ciPALM has shown comparable theoretical properties and numerical performances as {\sc Snipal} but only has a single tolerance parameter $\rho\in[0,1)$ in the error criterion \eqref{inexcond-ciALM}. Hence the corresponding parameter tuning is typically easier than that in the {\sc Snipal} from the computation and implementation perspectives, as shown in Section \ref{section-classic-ot} where we investigate the effects of different inexactness conditions.


The rest of this paper is organized as follows. We introduce the VHPE and present its convergence results in Section \ref{sec:VHPE}. We then develop the ciPALM for solving problem \eqref{eq-regOTpro} in Section \ref{sec:PALM}. Moreover, we derive its connection to the VHPE for obtaining the convergence properties for the ciPALM. Section \ref{sec:SSN} is devoted to applying the {\sc Ssn} for solving the ciPALM subproblem.
In Section \ref{sec:num}, we evaluate the numerical performance of our algorithm by solving various large-scale (un)regularized OT problems. Finally, we conclude the paper in Section \ref{sec:conc}.

\paragraph{Notation and preliminaries.}
We use $\mathbb{R}^n$, $\mathbb{R}^n_+$, $\mathbb{R}^{m\times n}$ and $\mathbb{R}^{m\times n}_+$ to denote the set of $n$-dimensional real vectors, $n$-dimensional nonnegative vectors, $m\times n$ real matrices, and $m\times n$ nonnegative matrices, respectively. We also denote $\overline{\mathbb{R}}:= \mathbb{R}\cup \{\pm \infty \}$ as the extended reals. For a vector $\bm{x}\in\mathbb{R}^n$, $x_i$ denotes its $i$-th entry, $\|\bm{x}\|$ denotes its Euclidean norm, and $\|\bm{x}\|_M:=\sqrt{\langle\bm{x},\, M \bm{x}\rangle}$ denotes its weighted norm associated with the symmetric positive semidefinite matrix $M$. For any $X\in\mathbb{R}^{m_1\times n_1}$ and $Y \in\mathbb{R}^{m_2\times n_2}$, $(X;Y)\in \mathbb{R}^{m_1\times n_1} \times \mathbb{R}^{m_2\times n_2}$ denotes the matrix obtained by vertically concatenating $X$ and $Y$. For a matrix $X\in\mathbb{R}^{m\times n}$, ${X_{ij}}$ denotes its $(i,j)$-th entry, and $\Vec{X}$ denotes the vectorization of $X$, where $[\Vec{X}]_{i+(j-1)m}=X_{ij}$ for any $1\leq i\leq m$ and $1\leq j \leq n$. For an index set $G\subseteq\{1,\cdots\!,m\} \times \{1,\cdots\!,n\}$ whose elements are arranged in the lexicographical order, let $|G|$ denote its cardinality and $G^c$ denote its complementarity set. We denote by $\bm{x}_{G}\in\mathbb{R}^{|G|}$ the vector formed from a matrix $X\in\mathbb{R}^{m\times n}$ by picking the entries indexed by $G$. The identity matrix of size $n \times n$ is denoted by $I_n$. We also use $\bm{x}\geq0$ and $X \geq 0$ to denote $x_i\geq0$ for all $i$ and $X_{ij}\geq 0$ for all $(i,j)$. Let $\mathcal{S}$ be a closed convex subset of $\mathbb{R}^n$. We write the weighted distance of $\bm{x}\in\mathbb{R}^n$ to $\mathcal{S}$ by $\mathrm{dist}_{M}(\bm{x},\mathcal{S}):=
\inf_{\bm{y}\in\mathcal{S}}\|\bm{x}-\bm{y}\|_M$. When $M$ is the identity matrix, we omit $M$ in the notation and simply use $\mathrm{dist}(\bm{x},\mathcal{S})$ to denote the Euclidean distance of $\bm{x}\in\mathbb{R}^n$ to $\mathcal{S}$. Moreover, we use $\Pi_{\mathcal{S}}(\bm{x})$ to denote the projection of $\bm{x}$ onto $\mathcal{S}$.

For an extended-real-valued function $f: \mathbb{R}^{n} \rightarrow [-\infty,\infty]$, we say that it is \textit{proper} if $f(\bm{x}) > -\infty$ for all $\bm{x}\in\mathbb{R}^{n}$ and its domain ${\rm dom}\,f:=\{\bm{x} \in \mathbb{R}^{n} : f(\bm{x})<\infty\}$ is nonempty. A proper function $f$ is said to be closed if it is lower semicontinuous. Assume that $f: \mathbb{R}^{n} \rightarrow (-\infty,\infty]$ is a proper and closed convex function, the subdifferential of $f$ at $\bm{x}\in{\rm dom}\,f$ is defined by $\partial f(\bm{x}):=\big\{\bm{d}\in\mathbb{R}^n: f(\bm{y}) \geq f(\bm{x}) + \langle \bm{d}, \,\bm{y}-\bm{x}\rangle, ~\forall\,\bm{y}\in\mathbb{R}^n\big\}$ and its conjugate function $f^*: \mathbb{R}^{n} \rightarrow (-\infty,\infty]$ is defined by $f^*(\bm{y}):=\sup\big\{\langle \bm{y},\,\bm{x}\rangle-f(\bm{x}) : \bm{x}\in\mathbb{R}^n\big\}$. For any $\nu>0$, the Moreau envelope of $\nu f$ at $\bm{x}$ is defined by $\mathtt{M}_{\nu f}(\bm{x}) := \min_{\bm{y}} \big\{f(\bm{y}) + \frac{1}{2\nu}\|\bm{y} - \bm{x}\|^2\big\}$,
and the proximal mapping of $\nu f$ at $\bm{x}$ is defined by $\mathtt{prox}_{\nu f}(\bm{x}) := \arg\min_{\bm{y}} \big\{f(\bm{y}) + \frac{1}{2\nu}\|\bm{y} - \bm{x}\|^2\big\}$.
For a given real symmetric matrix $M$, $\lambda_{\max}(M)$ and $\lambda_{\min}(M)$ denote its largest and smallest eigenvalues, respectively.

\blue{Let $\mathbb{X}$ and $\mathbb{Y}$ be two finite dimensional Euclidean spaces, we call a multivalued function $\mathcal{F}:\mathbb{X}\rightrightarrows \mathbb{Y}$ to be a multifunction. If for any $x\in \mathbb{X}$, the set $\mathcal{F}(x)\subset \mathbb{Y}$ is a polyhedral set, then we say that $\mathcal{F}$ is a polyhedral multifunction.}

\section{A variable metric hybrid proximal extragradient method}\label{sec:VHPE}

In this section, we present a variable metric hybrid proximal extragradient (VHPE) method and study its convergence properties, which will pave the way to establish the convergence of the method for solving problem \eqref{eq-regOTpro} developed in the next section. The VHPE is indeed a special case of a general hybrid inexact variable metric proximal point algorithm developed by Parente, Lotito, and Solodov \cite{pls2008class}, and can be viewed as an extension of the well-recognized hybrid proximal extragradient (HPE) method developed by Solodov and Svaiter \cite{ss1999hybridap,ss1999hybridpr}. Let $\mathcal{T}:\mathbb{R}^\ell\to\mathbb{R}^\ell$ be a maximal monotone operator. The VHPE for solving the monotone inclusion problem $0\in\mathcal{T}(\bm{x})$ is presented as Algorithm \ref{algo:VHPE}.

\begin{algorithm}[htb!]
\caption{A variable metric hybrid proximal extragradient (VHPE) method}\label{algo:VHPE}
	
\textbf{Initialization:} Choose $0\leq\rho<1$, $\bm{x}^0\in\mathbb{R}^{\ell}$, and choose two sequences $\{c_k\}\subseteq\mathbb{R}$ and $\{M_k\}\subseteq\mathbb{R}^{\ell\times\ell}$.
	Set $k=0$.

\While{a termination criterion is not met,}{

\textbf{Step 1.} Approximately solve
\begin{equation}\label{VHPE-subpro}
0\in c_kM_k\mathcal{T}(\bm{x}) + (\bm{x}-\bm{x}^k)
\end{equation}
to find a triplet $(\widetilde{\bm{x}}^{k+1}, \bm{d}^{k+1}, \varepsilon_{k+1}) \in\mathbb{R}^{\ell}\times\mathbb{R}^{\ell}\times\mathbb{R}_+$ such that
\begin{equation}\label{VHPE-inexcond}
\left\{\,\begin{aligned}
&\bm{d}^{k+1} \in \mathcal{T}^{\varepsilon_{k+1}}(\widetilde{\bm{x}}^{k+1}), \\[3pt]
&\|c_kM_k\bm{d}^{k+1} + \widetilde{\bm{x}}^{k+1} - \bm{x}^k\|^2_{M_k^{-1}} + 2c_k\varepsilon_{k+1} \leq \rho^2\|\widetilde{\bm{x}}^{k+1} - \bm{x}^k\|^2_{M_k^{-1}}.
\end{aligned}\right.
\end{equation}

\textbf{Step 2.} Update $\bm{x}^{k+1}=\bm{x}^k - c_kM_k\bm{d}^{k+1}$.
		
\textbf{Step 3.} Set $k=k+1$ and go to \textbf{Step 1}.

}	
\end{algorithm}

In the following, we study the convergence properties of the VHPE in Algorithm \ref{algo:VHPE}. To this end, we first make the following assumptions.

\begin{assumption}\label{assmp-cM}
The sequences $\{c_k\}\subseteq\mathbb{R}$ and $\{M_k\}\subseteq\mathbb{R}^{\ell\times\ell}$ satisfy the following conditions.
\begin{itemize}
\item[{\rm (i)}] $\{c_k\}\subseteq\mathbb{R}$ is a sequence of positive
numbers and is bounded away from zero, i.e., there exists a constant $c>0$ such that $c_k\geq c$ for all $k\geq0$.
\item[{\rm (ii)}] $\{M_k\}\subseteq\mathbb{R}^{\ell\times\ell}$ is a sequence of symmetric positive definite matrices satisfying $\frac{1}{1+\eta_k}M_k\preceq M_{k+1}$ and $\underline{\lambda}\leq\lambda_{\min}(M_k)\leq\lambda_{\max}(M_k)\leq\overline{\lambda}$ for all $k\geq0$, with some nonnegative summable sequence $\{\eta_k\}$ and constants $0<\underline{\lambda}<\overline{\lambda}$.
\end{itemize}
\end{assumption}

We then present the global convergence of the VHPE in the next theorem. Here, we should point out that the following results (i), (iii), and (iv) can be obtained by directly applying
\cite[Proposition 3.1, Proposition 4.1 and Theorem 4.2]{pls2008class} since the VHPE in Algorithm \ref{algo:VHPE} falls into the general algorithmic framework in \cite{pls2008class}. For the self-contained purpose, we provide a more succinct proof in the appendix.

\begin{theorem}\label{thm-VHPEcon}
Suppose that $\Omega:=\mathcal{T}^{-1}(0)\neq\emptyset$ and Assumption \ref{assmp-cM} holds. Let $\{\bm{x}^k\}$ be the sequence generated by the VHPE in Algorithm \ref{algo:VHPE}. Then, the following statements hold.
\begin{itemize}
\item[{\rm (i)}] The sequence $\{\bm{x}^k\}$ is bounded.
\item[{\rm (ii)}] For any $k\geq0$, we have
                  \begin{equation}\label{ineqzomega}
                  \mathrm{dist}_{M_{k+1}^{-1}}(\bm{x}^{k+1},\Omega) \leq (1+\eta_k)\,\mathrm{dist}_{M_{k}^{-1}}(\bm{x}^{k},\Omega).
                  \end{equation}
\item[{\rm (iii)}] $\lim\limits_{k\to\infty}\|\widetilde{\bm{x}}^{k+1}-\bm{x}^{k}\|
    = \lim\limits_{k\to\infty}\|\bm{d}^{k}\|
    = \lim\limits_{k\to\infty}\varepsilon_{k} = 0$.
\item[{\rm (iv)}] The sequence $\{\bm{x}^k\}$ converges to a point $\bm{x}^{\infty}$ such that $0\in\mathcal{T}(\bm{x}^{\infty})$.
\end{itemize}
\end{theorem}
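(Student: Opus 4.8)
The plan is to treat this as a fairly standard inexact proximal point / hybrid proximal extragradient convergence analysis, carried out with respect to the variable inner products induced by $\{M_k^{-1}\}$. The overarching tool is a quasi-Fej\'er monotonicity argument with respect to the solution set $\Omega = \mathcal{T}^{-1}(0)$, together with summability of the error terms extracted from the relative error condition \eqref{VHPE-inexcond}.

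First I would establish the basic one-step inequality. Fix $\bm{x}^*\in\Omega$. Since $\bm{d}^{k+1}\in\mathcal{T}^{\varepsilon_{k+1}}(\widetilde{\bm{x}}^{k+1})$ and $0\in\mathcal{T}(\bm{x}^*)$, the enlargement property of the monotone operator gives $\langle \bm{d}^{k+1}-0,\ \widetilde{\bm{x}}^{k+1}-\bm{x}^*\rangle \geq -\varepsilon_{k+1}$. Using $\bm{x}^{k+1}=\bm{x}^k-c_kM_k\bm{d}^{k+1}$, i.e. $c_k\bm{d}^{k+1}=M_k^{-1}(\bm{x}^k-\bm{x}^{k+1})$, one expands $\|\bm{x}^{k+1}-\bm{x}^*\|^2_{M_k^{-1}}$ and obtains, after substituting the above monotonicity inequality and the quantity $\bm{r}^{k+1}:=c_kM_k\bm{d}^{k+1}+\widetilde{\bm{x}}^{k+1}-\bm{x}^k$ (the ``residual'' appearing in \eqref{VHPE-inexcond}), an estimate of the form
\begin{equation*}
\|\bm{x}^{k+1}-\bm{x}^*\|^2_{M_k^{-1}} \leq \|\bm{x}^{k}-\bm{x}^*\|^2_{M_k^{-1}} - \big(\|\widetilde{\bm{x}}^{k+1}-\bm{x}^k\|^2_{M_k^{-1}} - \|\bm{r}^{k+1}\|^2_{M_k^{-1}} - 2c_k\varepsilon_{k+1}\big).
\end{equation*}
The second bullet of \eqref{VHPE-inexcond} says exactly that the bracketed term is at least $(1-\rho^2)\|\widetilde{\bm{x}}^{k+1}-\bm{x}^k\|^2_{M_k^{-1}}\geq 0$, so $\|\bm{x}^{k+1}-\bm{x}^*\|^2_{M_k^{-1}}\leq \|\bm{x}^{k}-\bm{x}^*\|^2_{M_k^{-1}}$. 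Combining this with the monotonicity $\tfrac{1}{1+\eta_k}M_k\preceq M_{k+1}$ from Assumption \ref{assmp-cM}(ii), which yields $\|\bm{v}\|^2_{M_{k+1}^{-1}}\leq (1+\eta_k)\|\bm{v}\|^2_{M_k^{-1}}$, gives $\|\bm{x}^{k+1}-\bm{x}^*\|^2_{M_{k+1}^{-1}}\leq (1+\eta_k)\|\bm{x}^{k}-\bm{x}^*\|^2_{M_k^{-1}}$; taking infimum over $\bm{x}^*\in\Omega$ proves statement (ii), i.e. \eqref{ineqzomega}, in its squared form (and then its stated form). Since $\{\eta_k\}$ is summable, $\prod_k(1+\eta_k)<\infty$, so $\{\mathrm{dist}_{M_k^{-1}}(\bm{x}^k,\Omega)\}$ is bounded; using the uniform eigenvalue bounds $\underline\lambda\le\lambda_{\min}(M_k)\le\lambda_{\max}(M_k)\le\overline\lambda$ this transfers to boundedness of $\mathrm{dist}(\bm{x}^k,\Omega)$, hence of $\{\bm{x}^k\}$ — statement (i).

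Next I would prove (iii). Running the quasi-Fej\'er recursion and telescoping (absorbing the $(1+\eta_k)$ factors via the finite product), the nonnegative ``gap'' terms $(1-\rho^2)\|\widetilde{\bm{x}}^{k+1}-\bm{x}^k\|^2_{M_k^{-1}}$ are summable; with $\rho<1$ and the lower eigenvalue bound this gives $\sum_k\|\widetilde{\bm{x}}^{k+1}-\bm{x}^k\|^2<\infty$, hence $\|\widetilde{\bm{x}}^{k+1}-\bm{x}^k\|\to0$. From the first inequality in \eqref{VHPE-inexcond}, $\|\bm{r}^{k+1}\|^2_{M_k^{-1}}\leq\rho^2\|\widetilde{\bm{x}}^{k+1}-\bm{x}^k\|^2_{M_k^{-1}}\to0$ and $2c_k\varepsilon_{k+1}\leq\rho^2\|\widetilde{\bm{x}}^{k+1}-\bm{x}^k\|^2_{M_k^{-1}}\to0$; since $c_k\geq c>0$ this forces $\varepsilon_{k+1}\to0$. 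Finally $c_kM_k\bm{d}^{k+1}=\bm{r}^{k+1}-(\widetilde{\bm{x}}^{k+1}-\bm{x}^k)\to0$, and again using $c_k\geq c$ together with $\lambda_{\min}(M_k)\geq\underline\lambda$ gives $\|\bm{d}^{k+1}\|\to0$, which is (iii). (A re-indexing remark is all that is needed to match the exact form in the statement.)

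For (iv) I would use the standard argument: by (i), $\{\bm{x}^k\}$ has a convergent subsequence $\bm{x}^{k_j}\to\bm{x}^\infty$; since $\|\widetilde{\bm{x}}^{k+1}-\bm{x}^k\|\to0$, also $\widetilde{\bm{x}}^{k_j+1}\to\bm{x}^\infty$. Because $\bm{d}^{k_j+1}\in\mathcal{T}^{\varepsilon_{k_j+1}}(\widetilde{\bm{x}}^{k_j+1})$ with $\bm{d}^{k_j+1}\to0$ and $\varepsilon_{k_j+1}\to0$, the (sequential) closedness of the graph of the $\varepsilon$-enlargement of a maximal monotone operator — or equivalently the fact that $\mathcal{T}$ is maximal monotone and the ``transportation formula''/outer-semicontinuity of $\mathcal{T}^{\varepsilon}$ — yields $0\in\mathcal{T}(\bm{x}^\infty)$, so $\bm{x}^\infty\in\Omega$. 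It then remains to upgrade subsequential to full convergence: apply \eqref{ineqzomega} with this particular $\bm{x}^\infty\in\Omega$ in place of the infimum, i.e. use $\|\bm{x}^{k+1}-\bm{x}^\infty\|^2_{M_{k+1}^{-1}}\leq(1+\eta_k)\|\bm{x}^{k}-\bm{x}^\infty\|^2_{M_k^{-1}}$; summability of $\{\eta_k\}$ makes $\{\|\bm{x}^{k}-\bm{x}^\infty\|^2_{M_k^{-1}}\}$ a convergent sequence (it is quasi-monotone and bounded below), and along $\{k_j\}$ it tends to $0$, so the whole sequence $\|\bm{x}^{k}-\bm{x}^\infty\|^2_{M_k^{-1}}\to0$, whence $\bm{x}^k\to\bm{x}^\infty$ by the eigenvalue bounds. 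The main obstacle — really the only non-bookkeeping point — is the closedness/outer-semicontinuity property of $\mathcal{T}^\varepsilon$ needed to pass to the limit in $\bm{d}^{k_j+1}\in\mathcal{T}^{\varepsilon_{k_j+1}}(\widetilde{\bm{x}}^{k_j+1})$; this is a known property of enlargements of maximal monotone operators (Burachik--Iusem--Svaiter), and since the excerpt says these results follow from \cite{pls2008class}, I would simply cite it, while the rest is careful manipulation of the variable-metric norms using Assumption \ref{assmp-cM}.
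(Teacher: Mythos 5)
Your proposal is correct and follows essentially the same route as the paper's proof: the one-step quasi-Fej\'er inequality obtained from the $\varepsilon$-enlargement property and the relative error condition \eqref{VHPE-inexcond}, transfer between the metrics via $M_{k+1}^{-1}\preceq(1+\eta_k)M_k^{-1}$, vanishing of the gap terms for (iii), and the graph-closedness/maximal-monotonicity limit argument plus the quasi-Fej\'er upgrade for (iv). The only cosmetic differences are that the paper takes $\bm{x}^*=\Pi_{\Omega}(\bm{x}^k)$ rather than an infimum for (ii) and deduces $\|\widetilde{\bm{x}}^{k+1}-\bm{x}^k\|\to0$ from convergence of $\{\|\bm{x}^k-\bm{x}^*\|^2_{M_k^{-1}}\}$ rather than from summability of the gap terms; both variants are valid.
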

\begin{proof}
See Appendix \ref{sec-appendix-vhpecon}.
\end{proof}

We next study the convergence rate of the VHPE under the following error-bound assumption. Note from \cite[Lemma 2.4]{li2020asymptotically} that this error bound condition is weaker than the local upper Lipschitz continuity of $\mathcal{T}^{-1}$ at the origin used in \cite{pls2008class} and has been employed in \cite{li2020asymptotically} for establishing the asymptotic Q-superlinear convergence rate of a preconditioned proximal point algorithm with absolute error criteria.

\begin{assumption}\label{assmp-errbdweak}
For any $r>0$, there exist a $\kappa>0$ such that
\begin{equation}\label{errbdineq}
\mathrm{dist}\big(\bm{x}, \mathcal{T}^{-1}(0)\big)
\leq\kappa\,\mathrm{dist}\big(0, \mathcal{T}(\bm{x})\big), \quad \forall\,\bm{x}\in\big\{\bm{x}\in\mathbb{R}^{\ell} \mid \mathrm{dist}\big(\bm{x}, \mathcal{T}^{-1}(0)\big)\leq r\big\}.
\end{equation}
\end{assumption}


\begin{theorem}\label{thm-VHPEconrate}
Under the same assumptions in Theorem \ref{thm-VHPEcon} and suppose additionally that Assumption \ref{assmp-errbdweak} holds
with $r:=\sqrt{\overline{\lambda}}\, \mathrm{dist}_{M_0^{-1}}(\bm{x}^0,\Omega) \prod^{\infty}_{i=0} (1+\eta_i)$. Let $\{\bm{x}^k\}$ be the sequence generated by the VHPE in Algorithm \ref{algo:VHPE}. Then, for all $k\geq0$, we have
\begin{equation*}
\mathrm{dist}_{M_{k+1}^{-1}}(\bm{x}^{k+1},\Omega) \leq \mu_k\,\mathrm{dist}_{M_{k}^{-1}}(\bm{x}^{k},\Omega),
\end{equation*}
where
\begin{equation}\label{defmuk}
\mu_k := \frac{1+\eta_k}{1-\rho(1-\rho)^{-1}}\left(\rho(1-\rho)^{-1}
+\frac{(1+\rho(1-\rho)^{-1})\kappa}{\sqrt{\kappa^2+\underline{\lambda}^2c_k^2}}\right)<1
\end{equation}
for sufficiently small $\rho$ and sufficiently large $c_k$.
\end{theorem}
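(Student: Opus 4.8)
The plan is to bootstrap Theorem~\ref{thm-VHPEcon} (in particular the monotone contraction estimate \eqref{ineqzomega}) together with the error-bound Assumption~\ref{assmp-errbdweak} to upgrade the linear rate $1+\eta_k$ into the genuinely contractive rate $\mu_k<1$. The first step is bookkeeping with the norms: since $\underline{\lambda} I \preceq M_k \preceq \overline{\lambda} I$, the $M_k^{-1}$-weighted distances are all equivalent to the Euclidean distance up to the fixed constants $\underline{\lambda}^{-1/2}, \overline{\lambda}^{-1/2}$. Combining this with \eqref{ineqzomega} telescoped from $0$ to $k$ shows that every iterate $\widetilde{\bm x}^{k+1}$ (and $\bm x^{k}$) stays inside the Euclidean ball $\{\bm x : \mathrm{dist}(\bm x,\Omega)\le r\}$ with $r$ chosen exactly as in the hypothesis; this is why $r$ is defined with the product $\prod_{i\ge 0}(1+\eta_i)$ — so that the error bound \eqref{errbdineq} is applicable at all the relevant points $\widetilde{\bm x}^{k+1}$.

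Next I would exploit the inexactness condition \eqref{VHPE-inexcond}. From $\bm d^{k+1}\in\mathcal T^{\varepsilon_{k+1}}(\widetilde{\bm x}^{k+1})$ one gets a handle on $\mathrm{dist}(0,\mathcal T(\widetilde{\bm x}^{k+1}))$ up to the enlargement error $\varepsilon_{k+1}$; the second line of \eqref{VHPE-inexcond} controls both $\|c_k M_k\bm d^{k+1}+\widetilde{\bm x}^{k+1}-\bm x^k\|_{M_k^{-1}}$ and $c_k\varepsilon_{k+1}$ by $\rho^2\|\widetilde{\bm x}^{k+1}-\bm x^k\|_{M_k^{-1}}^2$. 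The key algebraic manoeuvre is to write $\widetilde{\bm x}^{k+1}-\bm x^k = -c_k M_k\bm d^{k+1} + \bm r^{k+1}$ where $\bm r^{k+1}$ is the residual bounded above, so that $\|c_k M_k\bm d^{k+1}\|_{M_k^{-1}}\ge (1-\rho)\|\widetilde{\bm x}^{k+1}-\bm x^k\|_{M_k^{-1}}$, i.e. $c_k\|\bm d^{k+1}\| \gtrsim (1-\rho)\underline{\lambda}^{1/2}\|\widetilde{\bm x}^{k+1}-\bm x^k\|$ after unfolding the weighted norms. Feeding $\|\bm d^{k+1}\|$ and $\varepsilon_{k+1}$ into the error bound $\mathrm{dist}(\widetilde{\bm x}^{k+1},\Omega)\le \kappa\,\mathrm{dist}(0,\mathcal T(\widetilde{\bm x}^{k+1}))$ and using the $\mathcal T^{\varepsilon}$-version of it (the same derivation as in \cite[Section~2]{li2020asymptotically}) yields an estimate of the form $\mathrm{dist}(\widetilde{\bm x}^{k+1},\Omega)^2 \lesssim \kappa^2(\|\bm d^{k+1}\|^2 + \tfrac{2}{c_k}\varepsilon_{k+1})$. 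One then combines this with the Step~2 update $\bm x^{k+1}=\bm x^k - c_k M_k\bm d^{k+1}$ and a Pythagoras-type inequality (projecting onto $\Omega$ in the $M_k^{-1}$ metric, exactly as in the proof of \eqref{ineqzomega}) to bound $\mathrm{dist}_{M_{k+1}^{-1}}(\bm x^{k+1},\Omega)$ by a multiple of $\|\widetilde{\bm x}^{k+1}-\bm x^k\|_{M_k^{-1}}$, which in turn is $\le \tfrac{1}{1-\rho}\|c_k M_k \bm d^{k+1}\|_{M_k^{-1}}$. Chaining these, and using $c_k\ge c$ from Assumption~\ref{assmp-cM}(i) plus $\lambda_{\min}(M_k)\ge\underline{\lambda}$, produces the quotient $\mathrm{dist}_{M_{k+1}^{-1}}(\bm x^{k+1},\Omega)/\mathrm{dist}_{M_k^{-1}}(\bm x^k,\Omega) \le \mu_k$ with $\mu_k$ precisely of the claimed form \eqref{defmuk}: the $\rho(1-\rho)^{-1}$ terms come from the residual $\bm r^{k+1}$, the factor $\kappa/\sqrt{\kappa^2+\underline{\lambda}^2 c_k^2}$ comes from solving the quadratic lower bound $c_k\|\bm d^{k+1}\| \gtrsim \|\widetilde{\bm x}^{k+1}-\bm x^k\|$ against the error bound $\mathrm{dist}(\widetilde{\bm x}^{k+1},\Omega)\lesssim \kappa\|\bm d^{k+1}\|$, and the leading $(1+\eta_k)$ is inherited from the metric-change factor $\tfrac{1}{1+\eta_k}M_k\preceq M_{k+1}$.

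Finally, the fact that $\mu_k<1$ for $\rho$ small and $c_k$ large is a one-line limit computation: as $\rho\downarrow 0$ the prefactor $(1-\rho(1-\rho)^{-1})^{-1}\to 1$ and the bracket tends to $\kappa/\sqrt{\kappa^2+\underline{\lambda}^2 c_k^2}<1$, and that quantity $\to 0$ as $c_k\to\infty$; one can make the dependence explicit by noting $\eta_k\to 0$ as well (summability), so for all $k$ large and $\rho$ below an explicit threshold depending on $\kappa,\underline{\lambda},c$ one has $\mu_k<1$. \textbf{The main obstacle} I anticipate is the careful handling of the enlargement $\mathcal T^{\varepsilon_{k+1}}$ rather than $\mathcal T$ itself: one cannot directly say $\bm d^{k+1}\in\mathcal T(\widetilde{\bm x}^{k+1})$, so the error bound must be applied in a perturbed form, and tracking how the $2c_k\varepsilon_{k+1}$ term in \eqref{VHPE-inexcond} propagates through to the $\sqrt{\kappa^2+\underline{\lambda}^2 c_k^2}$ denominator — without losing a constant that would spoil the "$<1$" conclusion — is the delicate part. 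The rest is the same projection/Pythagoras machinery already used to prove Theorem~\ref{thm-VHPEcon}.
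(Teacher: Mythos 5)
Your high-level plan (error bound $+$ inexactness $+$ the metric-drift factor $1+\eta_k$) is the right one, and you correctly locate the origin of each piece of $\mu_k$. But the central step is done at the wrong point, and the gap you yourself flag as ``the main obstacle'' is a real one that your sketch does not resolve. You propose to apply Assumption \ref{assmp-errbdweak} at the inexact iterate $\widetilde{\bm{x}}^{k+1}$, using $\bm{d}^{k+1}\in\mathcal{T}^{\varepsilon_{k+1}}(\widetilde{\bm{x}}^{k+1})$ together with ``the $\mathcal{T}^{\varepsilon}$-version'' of the error bound. No such version is available from the hypotheses: the error bound \eqref{errbdineq} controls $\mathrm{dist}(0,\mathcal{T}(\bm{x}))$, whereas membership in the enlargement $\mathcal{T}^{\varepsilon}(\widetilde{\bm{x}}^{k+1})$ gives no bound on $\mathrm{dist}(0,\mathcal{T}(\widetilde{\bm{x}}^{k+1}))$ in general (the enlargement at a point can contain small vectors even when $\mathcal{T}$ at that same point contains none), so your claimed estimate $\mathrm{dist}(\widetilde{\bm{x}}^{k+1},\Omega)^2\lesssim \kappa^2\bigl(\|\bm{d}^{k+1}\|^2+\tfrac{2}{c_k}\varepsilon_{k+1}\bigr)$ is unjustified, and any rescue via enlargement--transportation lemmas would degrade the constants and not yield \eqref{defmuk} as stated.

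The paper's proof sidesteps this entirely by introducing the \emph{exact} resolvent $\mathcal{P}_k:=(\mathcal{I}+c_kM_k\mathcal{T})^{-1}$ and $\mathcal{Q}_k:=\mathcal{I}-\mathcal{P}_k$, and applying the error bound at $\mathcal{P}_k(\bm{x}^k)$, where one has a genuine inclusion $c_k^{-1}M_k^{-1}\mathcal{Q}_k(\bm{x}^k)\in\mathcal{T}(\mathcal{P}_k(\bm{x}^k))$ with no $\varepsilon$ attached. The factor $\kappa/\sqrt{\kappa^2+\underline{\lambda}^2c_k^2}$ then comes not from the quadratic trade-off you describe but from combining $\mathrm{dist}_{M_k^{-1}}(\mathcal{P}_k(\bm{x}^k),\Omega)\le \tfrac{\kappa}{c_k\underline{\lambda}}\|\mathcal{Q}_k(\bm{x}^k)\|_{M_k^{-1}}$ with the firm-nonexpansiveness identity $\|\mathcal{Q}_k(\bm{x}^k)\|_{M_k^{-1}}^2\le \mathrm{dist}_{M_k^{-1}}^2(\bm{x}^k,\Omega)-\mathrm{dist}_{M_k^{-1}}^2(\mathcal{P}_k(\bm{x}^k),\Omega)$ (Proposition \ref{prop-PQ}(c)) --- a Pythagoras inequality for the resolvent pair, not the one used in the proof of \eqref{ineqzomega}. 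The inexactness and the enlargement enter only in the separate estimate $\|\bm{x}^{k+1}-\mathcal{P}_k(\bm{x}^k)\|_{M_k^{-1}}\le \rho(1-\rho)^{-1}\|\bm{x}^{k+1}-\bm{x}^k\|_{M_k^{-1}}$, where $\varepsilon_{k+1}$ is absorbed harmlessly via the monotonicity pairing $\langle\bm{d}^{k+1}-\bm{w}^{k+1},\widetilde{\bm{x}}^{k+1}-\mathcal{P}_k(\bm{x}^k)\rangle\ge-\varepsilon_{k+1}$ with $\bm{w}^{k+1}\in\mathcal{T}(\mathcal{P}_k(\bm{x}^k))$; a final triangle inequality through $\Pi_{\Omega}(\mathcal{P}_k(\bm{x}^k))$ assembles $\mu_k$. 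Without this detour through the exact resolvent, your argument cannot be completed as written.
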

\begin{proof}
See Appendix \ref{sec-appendix-vhpecon}.
\end{proof}

\begin{remark}[\textbf{Comments on the coefficient $\mu_k$}]
One can see from the definition of $\mu_k$ in \eqref{defmuk} that $\mu_k$ can be less than 1 whenever $\rho$ is sufficiently small and $c_k$ is sufficiently large. In practical implementations, one can choose a constant $\rho<\frac{1}{3}$ and an increasing sequence of $\{c_k\}$ with $c_k\uparrow\infty$. Recall that $\eta_k\to0$ (since $\{\eta_k\}$ is summable). Note also that $\{\eta_k\}$ is not involved in the error criterion \eqref{VHPE-inexcond}. Then, we have
\begin{equation*}
\lim\limits_{k\to\infty}\,\mu_k = \frac{\rho(1-\rho)^{-1}}{1-\rho(1-\rho)^{-1}}
= \frac{\rho}{1-2\rho} < 1.
\end{equation*}
This implies that the sequence $\big\{\mathrm{dist}_{M_{k}^{-1}}(\bm{x}^{k},\Omega)\big\}$ converges linearly to zero after finitely many iterations.
\end{remark}

\section{A corrected inexact proximal augmented Lagrangian method}\label{sec:PALM}

In this section, we aim to design a unified algorithmic framework to solve the regularized OT problem \eqref{eq-regOTpro} with $\mathcal{R}$ chosen as \eqref{eq:R}, and $\mathcal{T}$ chosen as \eqref{eq-cT}. To this end, we first rewrite the problem in the following unified manner:
\begin{equation}\label{eq:pmain}
\begin{aligned}
&\min_{X\in\mathbb{R}^{m\times n}, \,\bm{y}\in \mathbb{R}^{m}, \,\bm{z}\in \mathbb{R}^n} \inner{C, X} + p(X) + p_r(\bm{y}) + p_c(\bm{z}) \\
&\hspace{1.4cm} \mathrm{s.t.} \hspace{1.4cm}  AXB = S, ~~X\bm{1}_n + \bm{y} = \bm{\alpha}, ~~X^\top \bm{1}_m + \bm{z} = \bm{\beta},
\end{aligned}
\end{equation}
where $p:\mathbb{R}^{m\times n}\to\overline{\mathbb{R}}$, $p_r:\mathbb{R}^m\to \overline{\mathbb{R}}$ and $p_c:\mathbb{R}^n\to\overline{\mathbb{R}}$ are three proper closed convex functions, $\bm{\alpha}\in\mathbb{R}^m$, $ \bm{\beta}\in \mathbb{R}^{n} $, $C\in\mathbb{R}^{m\times n}$, $ A\in \mathbb{R}^{\widetilde m\times m} $, $B\in \mathbb{R}^{n\times \widetilde n}$ and $S\in \mathbb{R}^{\widetilde m\times \widetilde n}$ are given data. It is easy to see that problem \eqref{eq-regOTpro} falls into the form of \eqref{eq:pmain} with
\begin{equation*}
p(X) := \lambda_1 \sum_{G\in \mathcal{G}}\omega_G\norm{\bm{x}_G} + \frac{\lambda_2}{2}\norm{X}_F^2 + \delta_{\mathbb{R}_+^{m\times n}}(X),\quad p_r(\bm{y}):= \delta_{\mathcal{K}_r}(\bm{y}),\quad p_c(\bm{z}):= \delta_{\mathcal{K}_c}(\bm{z}).
\end{equation*}

Let $p^*:\mathbb{R}^{m\times n}\to \overline{\mathbb{R}}$, $p_r^*:\mathbb{R}^m\to \overline{\mathbb{R}}$ and $p_c^*:\mathbb{R}^n\to \overline{\mathbb{R}}$ be the conjugate functions of $p(\cdot)$, $p_r(\cdot)$ and $p_r(\cdot)$, respectively. Then, the dual problem of \eqref{eq:pmain} is equivalently given by
(modulo a minus sign)
\begin{equation}\label{eq:dmain}
\hspace{-2mm}
\min_{W\in \mathbb{R}^{\widetilde m\times \widetilde n}, \bm{u}\in\mathbb{R}^m, \bm{v}\in\mathbb{R}^n}
f(W, \bm{u}, \bm{v}) := \left\{\!\!\begin{array}{l}
-\langle S, \,W\rangle - \inner{\bm{\alpha},\bm{u}} - \inner{\bm{\beta},\bm{v}}
\\[5pt]
+\,  p^*\big(\bm{u}\bm{1}_n^\top + \bm{1}_m\bm{v}^\top + A^\top W B^\top - C\big)
+ p_r^*(\bm{u}) + p_c^*(\bm{v}).
\end{array}\right.\!\!
\end{equation}

Next, we present a corrected inexact proximal augmented Lagrangian method (ciPALM) with a relative error criterion to solve problem \eqref{eq:dmain}.
The algorithmic framework is developed based on the parametric convex duality framework (see, for example, \cite{r1970convex,r1974conjugate} and
\cite[Chapter 11]{rw1998variational}). We first identify problem \eqref{eq:dmain} with the following problem
\begin{equation}\label{para-dmain}
\min\limits_{W\in \mathbb{R}^{\widetilde m\times \widetilde n}, \,\bm{u}\in\mathbb{R}^m, \,\bm{v}\in\mathbb{R}^n} G\big(W, \bm{u}, \bm{v}, 0, 0, 0\big),
\end{equation}
where $G:\mathbb{R}^{\widetilde m\times \widetilde n} \times \mathbb{R}^m\times \mathbb{R}^n \times \mathbb{R}^{m \times n} \times \mathbb{R}^m\times \mathbb{R}^n \to \overline{\mathbb{R}} $ is defined by
\begin{equation*}
\begin{aligned}
\quad G\big(W, \bm{u}, \bm{v}, \Xi, \bm{\zeta}, \bm{\xi}\big)
:= & - \langle S, \,W\rangle -\inner{\bm{\alpha},\bm{u}} - \inner{\bm{\beta},\bm{v}}  \\
& + p^*\big(\bm{u}\bm{1}_n^\top + \bm{1}_m\bm{v}^\top + A^\top W B^\top - C + \Xi\big) + p_r^*(\bm{u} + \bm{\zeta}) + p_c^*\big(\bm{v}+\bm{\xi} \big).
\end{aligned}
\end{equation*}
Note that $G$ is proper closed convex since $p^*$, $p_r^*$ and $p_c^*$ are all proper closed convex. We also define $F:\mathbb{R}^{\widetilde m\times \widetilde n} \times\mathbb{R}^m\times \mathbb{R}^n \times  \mathbb{R}^{m\times n}\times \mathbb{R}^m \times \mathbb{R}^n \to \overline{\mathbb{R}}$ to be the concave conjugate of $G$, that is
\begin{equation*}
F(\widetilde{W}, \widetilde{\bm{u}}, \widetilde{\bm{v}},  X, \bm{y}, \bm{z})
:= \inf\limits_{W, \bm{u}, \bm{v}, \Xi,\bm{\zeta},\bm{\xi}}
\left\{
\begin{array}{c}
G(W, \bm{u}, \bm{v}, \Xi, \bm{\zeta}, \bm{\xi})  - \langle\widetilde{W},W\rangle
- \inner{\widetilde{\bm{u}}, \bm{u}}
- \inner{\widetilde{\bm{v}}, \bm{v}} \\[5pt]
- \inner{X,\Xi} - \langle\bm{y},\bm{\zeta}\rangle - \langle\bm{z},\bm{\xi}\rangle
\end{array}\right\},
\end{equation*}
which is a closed (upper semicontinuous) concave function. Then, the dual problem of problem \eqref{para-dmain} is given by
\begin{equation}\label{para-pmain}
\max\limits_{X\in \mathbb{R}^{m\times n}, \,\bm{y}\in\mathbb{R}^m, \,\bm{z}\in\mathbb{R}^n} F\big(0, 0, 0,X, \bm{y}, \bm{z}\big),
\end{equation}
which can be equivalently rewritten as problem \eqref{eq:pmain}.

The (ordinary) Lagrangian function of problem \eqref{eq:dmain} can be defined by taking the concave conjugate of $G$ with respect to its last three arguments (see \cite[Definition 11.45]{rw1998variational}), that is,
\begin{equation*}
\begin{aligned}
\ell\big(W, \bm{u}, \bm{v}, X, \bm{y}, \bm{z}\big)
:= &\; \inf\limits_{(\Xi, \bm{\zeta},\bm{\xi})\in \mathbb{R}^{m \times n} \times \mathbb{R}^m\times \mathbb{R}^n}\big\{G(W, \bm{u}, \bm{v},  \Xi, \bm{\zeta}, \bm{\xi})
- \inner{X, \Xi} - \langle\bm{y}, \bm{\zeta}\rangle - \langle\bm{z}, \bm{\xi}\rangle\big\} \\
= &\; - \inner{S, \,W} - \inner{\bm{\alpha},\bm{u}} - \inner{\bm{\beta},\bm{v}} - p(X) - p_r( \bm{y}) - p_c(\bm{z})  \\
&\;  + \inner{\bm{u}\bm{1}_n^\top + \bm{1}_m\bm{v}^\top + A^\top WB^\top - C, \,X} + \inner{\bm{u},\bm{y}} + \inner{\bm{v},\bm{z}}.
\end{aligned}
\end{equation*}
Clearly, $\ell$ is convex in its first three arguments and concave in the remaining arguments. Let $\partial\ell$ denote its subgradient map (see
\cite[Page 374]{r1970convex}). If $\big( W^*, \bm{u}^*, \bm{v}^*, X^*,\bm{y}^*, \bm{z}^*\big)$
is such that $0 \in\partial \ell\big(W^*, \bm{u}^*, \bm{v}^*, X^*, \bm{y}^*, \bm{z}^*\big)$, then $\big(W^*, \bm{u}^*, \bm{v}^*\big)$ solves problem \eqref{para-dmain} (i.e., problem \eqref{eq:dmain}) and $\big(X^*, \bm{y}^*, \bm{z}^*\big)$ solves problem \eqref{para-pmain} (i.e., problem \eqref{eq:pmain}). In this case, we say that $\big(W^*, \bm{u}^*, \bm{v}^*, X^*, \bm{y}^*, \bm{z}^*\big)$ is a \textit{saddle point} of the Lagrangian function $\ell\big(W, \bm{u}, \bm{v},  X, \bm{y}, \bm{z}\big)$. If such a saddle point exists, then strong duality holds, that is, $G\big(W^*, \bm{u}^*, \bm{v}^*, 0, 0, 0\big) = F\big(0, 0, 0, X^*, \bm{y}^*, \bm{z}^*\big)$ and thus the optimal values of the primal and dual problems \eqref{para-dmain} and \eqref{para-pmain} exist and coincide.

For a given parameter $\sigma>0$, the augmented Lagrangian function of problem \eqref{eq:dmain} is defined by (see \cite[Example 11.57]{rw1998variational})
\begin{align*}
&\; \mathcal{L}_{\sigma}\big(W, \bm{u}, \bm{v}, X, \bm{y}, \bm{z}\big) \\
:=&\; \sup\limits_{\Xi \in\mathbb{R}^{m\times n}, \bm{\zeta}\in\mathbb{R}^m, \bm{\xi}\in\mathbb{R}^n}\left\{\ell\big(W, \bm{u}, \bm{v}, \Xi, \bm{\zeta}, \bm{\xi}\big) - \frac{1}{2\sigma}\|\Xi - X\|_F^2
- \frac{1}{2\sigma}\|\bm{\zeta}-\bm{y}\|^2
- \frac{1}{2\sigma}\|\bm{\xi}-\bm{z}\|^2\right\} \\
=&\; - \inner{S, \,W} -\inner{\bm{\alpha},\bm{u}} - \inner{\bm{\beta},\bm{v}} - \frac{1}{2\sigma}\norm{X}_F^2 -  \frac{1}{2\sigma}\norm{\bm{y}}^2 -  \frac{1}{2\sigma}\norm{\bm{z}}^2\\
&\; - \mathtt{M}_{\sigma p}\big(X + \sigma(\bm{u}\bm{1}_n^\top + \bm{1}_m\bm{v}^\top + A^\top W B^\top - C)\big) + \frac{1}{2\sigma}\norm{X + \sigma(\bm{u}\bm{1}_n^\top + \bm{1}_m\bm{v}^\top + A^\top W B^\top - C)}_F^2   \\
&\; - \mathtt{M}_{\sigma p_r}\big(\bm{y} + \sigma \bm{u}\big)
+ \frac{1}{2\sigma}\norm{\bm{y} + \sigma \bm{u}}^2 - \mathtt{M}_{\sigma p_c}\big(\bm{z} + \sigma \bm{v}\big)
+ \frac{1}{2\sigma}\norm{\bm{z} + \sigma \bm{v}}^2.
\end{align*}
From the property of the Moreau envelope (see
\cite[Proposition 12.29]{bc2011convex}), we know that $\mathcal{L}_{\sigma}$ is continuously differentiable with respect to its first three arguments. In particular, given $(X,\,\bm{y},\,\bm{z}) \in \mathbb{R}^{m\times n}\times \mathbb{R}^m\times \mathbb{R}^n$, let
\begin{equation*}
\begin{aligned}
X_\sigma(W, \bm{u}, \bm{v}):= &\; \mathtt{prox}_{\sigma p} \big(X + \sigma(\bm{u}\bm{1}_n^\top + \bm{1}_m\bm{v}^\top + A^\top W B^\top - C)\big), \\
\bm{y}_\sigma(W, \bm{u}, \bm{v}):= &\; \mathtt{prox}_{\sigma p_r}\big(\bm{y} + \sigma \bm{u}\big), \quad
\bm{z}_\sigma(W, \bm{u}, \bm{v}):= \, \mathtt{prox}_{\sigma p_c}\big(\bm{z} + \sigma \bm{v}\big).
\end{aligned}
\end{equation*}
Then, it holds that
\begin{equation*}
\begin{aligned}
\nabla_{W}\mathcal{L}_{\sigma}\big(W, \bm{u}, \bm{v}, X, \bm{y}, \bm{z}\big) = &\;AX_\sigma(\bm{u}, \bm{v}, W)B - S, \\
\nabla_{\bm{u}}\mathcal{L}_{\sigma}\big(W, \bm{u}, \bm{v}, X, \bm{y}, \bm{z}\big) = &\; X_\sigma(\bm{u}, \bm{v}, W)\bm{1}_n + \bm{y}_\sigma(W, \bm{u}, \bm{v}) - \bm{\alpha}, \\
\nabla_{\bm{v}}\mathcal{L}_{\sigma}\big(W, \bm{u}, \bm{v}, X, \bm{y}, \bm{z}\big) = &\; X_\sigma(\bm{u}, \bm{v}, W)^\top \bm{1}_m + \bm{z}_\sigma(W, \bm{u}, \bm{v}) - \bm{\beta}.
\end{aligned}
\end{equation*}
With the above preparations, we are now ready to present the ciPALM for solving problem \eqref{eq:dmain} in Algorithm \ref{algo:ciPALM}.

\begin{algorithm}[htb!]
\caption{A corrected inexact proximal augmented Lagrangian method (ciPALM) for solving problem \eqref{eq:dmain}}\label{algo:ciPALM}
	
\textbf{Input:} Let $\rho\in[0,1)$, and let $\{\sigma_k\}_{k=0}^{\infty}$ and $\{\tau_k\}_{k=0}^{\infty}$ be two sequences of positive real numbers. Choose $\big(W^0, \bm{u}^0, \bm{v}^0, X^0, \bm{y}^0, \bm{z}^0\big) \in \mathbb{R}^{\widetilde m\times \widetilde n}\times\mathbb{R}^m\times \mathbb{R}^n \times \mathbb{R}^{m\times n}\times \mathbb{R}^m \times \mathbb{R}^n$ arbitrarily. Set $k=0$.

\While{a termination criterion is not met,}{
\vspace{1mm}
\textbf{Step 1.} Approximately solve the subproblem
\begin{equation}\label{ciPALM-subpro}
\min\limits_{\bm{u},\bm{v},W}~\mathcal{L}_{\sigma_k}\big(W,\bm{u},\bm{v},X^k,\bm{y}^k,\bm{z}^k\big) + \frac{\tau_k}{2\sigma_k}\left(\big\|W-W^k\big\|_F^2
+ \big\|\bm{u}-\bm{u}^k\big\|^2
+ \big\|\bm{v}-\bm{v}^k\big\|^2\right)
\end{equation}
to find $\big(\widetilde{W}^{k+1}, \widetilde{\bm{u}}^{k+1}, \widetilde{\bm{v}}^{k+1}, \widetilde{X}^{k+1}, \widetilde{\bm{y}}^{k+1}, \widetilde{\bm{z}}^{k+1}\big)$ such that
\begin{eqnarray}
\widetilde{X}^{k+1}
\!\!&:=&\!\! \mathtt{prox}_{\sigma_k p} \left(X^k + \sigma_k\big(\widetilde{\bm{u}}^{k+1}\bm{1}_n^\top + \bm{1}_m(\widetilde{\bm{v}}^{k+1})^\top + A^\top\widetilde{W}^{k+1}B^\top - C\big)\right), \nonumber \\[2pt]
\widetilde{\bm{y}}^{k+1}
\!\!&:=&\!\! \mathtt{prox}_{\sigma_k p_r}\big(\bm{y}^k + \sigma_k \widetilde{\bm{u}}^{k+1}\big),  \nonumber \\[2pt]
\quad
\widetilde{\bm{z}}^{k+1}
\!\!&:=&\!\! \mathtt{prox}_{\sigma_k p_c}\big(\bm{z}^k + \sigma_k \widetilde{\bm{v}}^{k+1} \big),  \nonumber \\[3pt]
\big\|\Delta^{k+1}\big\|
\!\!&\leq&\!\! \frac{\min(\sqrt{\tau_k},\,1)}{\sigma_k} \rho \sqrt{\tau_k\big\|\Delta^{k+1}_d\big\|^2 + \big\|\Delta^{k+1}_p\big\|^2}, \label{inexcond-ciALM}
\end{eqnarray}
where
\begin{equation*}
\begin{aligned}
\Delta^{k+1} \,&:=\, \big(\Delta^{k+1}_W, \,\Delta^{k+1}_u, \,\Delta^{k+1}_v\big), \\[2pt]
\Delta^{k+1}_p \,&:=\, \big(\widetilde{X}^{k+1} - X^k, \,\widetilde{\bm{y}}^{k+1} - \bm{y}^k,\,\widetilde{\bm{z}}^{k+1} - \bm{z}^k\big), \\[2pt]
\Delta^{k+1}_d \,&:=\, \big(\widetilde{W}^{k+1} - W^k, \, \widetilde{\bm{u}}^{k+1} - \bm{u}^k,	\,\widetilde{\bm{v}}^{k+1} - \bm{v}^k\big), \\[2pt]
\Delta^{k+1}_u	\,&:=\, \nabla_{\bm{u}}\mathcal{L}_{\sigma_k}\big(\widetilde{W}^{k+1}, \widetilde{\bm{u}}^{k+1}, \widetilde{\bm{v}}^{k+1}, X^k, \bm{y}^k, \bm{z}^k\big) + \tau_k\sigma_k^{-1}\big(\widetilde{\bm{u}}^{k+1} - \bm{u}^k\big), \\[2pt]
\Delta^{k+1}_v	\,&:=\,  \nabla_{\bm{v}}\mathcal{L}_{\sigma_k}\big(\widetilde{W}^{k+1}, \widetilde{\bm{u}}^{k+1}, \widetilde{\bm{v}}^{k+1}, X^k, \bm{y}^k, \bm{z}^k\big) + \tau_k\sigma_k^{-1}\big(\widetilde{\bm{v}}^{k+1} - \bm{v}^k\big), \\[2pt]
\Delta^{k+1}_W	\,&:=\, \nabla_{W}\mathcal{L}_{\sigma_k}\big(\widetilde{W}^{k+1}, \widetilde{\bm{u}}^{k+1}, \widetilde{\bm{v}}^{k+1}, X^k, \bm{y}^k, \bm{z}^k\big) + \tau_k\sigma_k^{-1}\big(\widetilde{W}^{k+1} - W^k\big).
\end{aligned}
\end{equation*}
		
\textbf{Step 2.} Compute
\begin{equation}\label{extrastep-ciALM}
\begin{aligned}
W^{k+1} &= W^k - \tau_k^{-1}\sigma_k\big(A\widetilde{X}^{k+1}B - S\big),  \\
\bm{u}^{k+1} &= \bm{u}^k - \tau_k^{-1}\sigma_k\big(\widetilde{X}^{k+1}\bm{1}_n + \widetilde{\bm{y}}^{k+1} - \bm{\alpha}\big),  \\			
\bm{v}^{k+1} &= \bm{v}^k - \tau_k^{-1}\sigma_k\big((\widetilde{X}^{k+1})^\top \bm{1}_m + \widetilde{\bm{z}}^{k+1} - \bm{\beta}\big),  \\
X^{k+1} &= \widetilde{X}^{k+1}, \quad
\bm{y}^{k+1} =  \widetilde{\bm{y}}^{k+1}, \quad
\bm{z}^{k+1} = \widetilde{\bm{z}}^{k+1}.
\end{aligned}
\end{equation}
		
\textbf{Step 3.} Set $k=k+1$ and go to \textbf{Step 1}. \vspace{1mm}
}
	
\textbf{Output:} $\big(W^k, \bm{u}^{k}, \bm{v}^{k},  X^k, \bm{y}^{k}, \bm{z}^{k}\big) \in \mathbb{R}^{\widetilde m\times \widetilde n} \times\mathbb{R}^m\times \mathbb{R}^n \times \mathbb{R}^{m\times n}\times \mathbb{R}^m \times \mathbb{R}^n$
\end{algorithm}

The reader may have observed that our ciPALM in Algorithm \ref{algo:ciPALM} is developed based on the augmented Lagrangian function $\mathcal{L}_{\sigma}$ with an adaptive proximal term $\frac{\tau_k}{2\sigma_k}\big(\|W-W^k\|_F^2 + \|\bm{u}-\bm{u}^k\|^2 + \|\bm{v}-\bm{v}^k\|^2 \big)$, and thus, looks similar to the
recent semismooth Newton based inexact proximal augmented Lagrangian ({\sc Snipal}) method in \cite[Section 3]{li2020asymptotically}.
{However, we would like to point out that the {\sc Snipal} is specifically developed for solving linear programming problems, while our ciPALM is tailored to problem \eqref{eq-regOTpro}, which involves an additional group-quadratic regularizer \eqref{eq:R}.
Moreover, compared with the {\sc Snipal}, our ciPALM has used a very different error criterion \eqref{inexcond-ciALM} for solving the subproblem \eqref{ciPALM-subpro} and performed an extra correction step to update $(W^{k+1}, \bm{u}^{k+1},\bm{v}^{k+1})$ in \eqref{extrastep-ciALM}. Specifically, in our context, the {\sc Snipal} requires the error term $\Delta^{k+1}$ to satisfy
\begin{equation}\label{inexcond-SNIPAL}
\begin{aligned}
&(A) \quad \|\Delta^{k+1}\| \leq \frac{\min(\sqrt{\tau_k},\,1)}{\sigma_k}\, \varepsilon_k, ~~\varepsilon_k\geq0, ~~\sum^{\infty}_{k=1}\varepsilon_k<\infty,  \\
&(B) \quad \|\Delta^{k+1}\| \leq \frac{\min(\sqrt{\tau_k},\,1)}{\sigma_k}\,\delta_k \sqrt{\tau_k\big\|\Delta^{k+1}_d\big\|^2 + \big\|\Delta^{k+1}_p\big\|^2}, ~~0\leq\delta_k<1, ~~\sum^{\infty}_{k=1}\delta_k<\infty,
\end{aligned}
\end{equation}
to guarantee the asymptotic superlinear convergence\footnote{Note that the global convergence of the {\sc Snipal} can be readily guaranteed by only employing the error criterion (A); see \cite[Section 3]{li2020asymptotically}.}
and directly set $( W^{k+1},\bm{u}^{k+1},\bm{v}^{k+1}) = (\widetilde{W}^{k+1}, \widetilde{\bm{u}}^{k+1}, \widetilde{\bm{v}}^{k+1})$. Note that the error criteria (A) and (B) are of the absolute type and involve two summable sequences of error tolerance parameters {$\{\varepsilon_k\}\subseteq[0,\infty)$ and $\{\delta_k\}\subseteq[0,1)$}, which require careful tuning for the algorithm to achieve good convergence efficiency.
This indeed makes the parameter tuning of the {\sc Snipal} less friendly
in practical implementations since the performance of the 
algorithm may depend sensitively on the choices of those error tolerance parameters.
In contrast, our ciPALM employs a relative error criterion \eqref{inexcond-ciALM}, which only has a \textit{single} tolerance parameter $\rho\in[0,1)$, and hence the corresponding parameter tuning is typically easier from the computation and implementation perspectives as we shall see in Section \ref{section-classic-ot}. The extra correction step \eqref{extrastep-ciALM} to update the variables $W^{k+1}$, $\bm{u}^{k+1}$, $\bm{v}^{k+1}$ is another difference of our ciPALM from the {\sc Snipal}. It would help to establish the connection between the ciPALM in Algorithm \ref{algo:ciPALM} and the VHPE in Algorithm \ref{algo:VHPE} so that we can readily study the convergence properties of the ciPALM, as we shall see later.

In addition, unlike a recent inexact augmented Lagrangian method with a different relative error criterion developed by Eckstein and Silva \cite{es2013practical}, we are more interested in incorporating a proximal term $\frac{\tau_k}{2\sigma_k}\big(\|W-W^k\|_F^2 + \|\bm{u}-\bm{u}^k\|^2 + \|\bm{v}-\bm{v}^k\|^2\big)$ in the subproblem \eqref{ciPALM-subpro}. Such a proximal term would help not only to guarantee the existence of the optimal solution of the subproblem \eqref{ciPALM-subpro}, but also to ensure the positive definiteness of the coefficient matrix of the underlying semi-smooth Newton linear system when solving the subproblem \eqref{ciPALM-subpro}, as shown in Section \ref{sec:SSN}.


In the following, we study the convergence properties of our ciPALM by establishing the connection between the ciPALM and the VHPE. Then, the convergence results can be readily obtained as a direct application of the general theory of the VHPE in Section \ref{sec:VHPE}. To this end, we define an operator $\mathcal{T}_{\ell}$ associated with the Lagrangian function $\ell\big(W, \bm{u},\bm{v}, X, \bm{y}, \bm{z}\big)$ by
\begin{align*}
&\mathcal{T}_{\ell}\big(W,\bm{u},\bm{v},X,\bm{y},\bm{z}\big) \\[3pt]
:= &
\left\{\big(W',\bm{u}',\bm{v}', X',\bm{y}',\bm{z}'\big) \,\mid\, \big(W', \bm{u}',\bm{v}', -X', -\bm{y}', -\bm{z}'\big)\in\partial\ell\big(W, \bm{u}, \bm{v}, X, \bm{y}, \bm{z}\big)\right\}  \\[3pt]
= & \left\{\big(W',\bm{u}',\bm{v}', X',\bm{y}',\bm{z}'\big) ~\left\lvert~
\begin{aligned}
W' = &~ -S + AXB, \;\;
\bm{u}'=  -\bm{\alpha}+X\bm{1}_n +\bm{y}, \;\;
\bm{v}' = -\bm{\beta} + X^\top \bm{1}_m + \bm{z},\\
X' \in &~ C - \bm{u}\bm{1}_n^\top - \bm{1}_m\bm{v}^\top - A^\top W B^\top + \partial p(X), \\
\bm{y}'\in&~ -\bm{u}+\partial p_r(\bm{y}), \quad
\bm{z}'\in -\bm{v}+\partial p_c(\bm{z}),
\end{aligned}\right.
\right\}.
\end{align*}
It is known from \cite[Corollary 37.5.2]{r1970convex} that $\mathcal{T}_{\ell}$ is maximal monotone. Let $\mathcal{I}_{m}$, $\mathcal{I}_{n}$, $\mathcal{I}_{m,n}$, and $\mathcal{I}_{\widetilde m, \widetilde n}$ be the identity mappings over $\mathbb{R}^{m}$, $\mathbb{R}^{n}$, $\mathbb{R}^{m\times n}$, and $\mathbb{R}^{\widetilde m\times \widetilde n}$, respectively. We define the following self-adjoint positive definite operator over $\mathbb{R}^{\widetilde m\times \widetilde n} \times \mathbb{R}^{m} \times \mathbb{R}^{n} \times  \mathbb{R}^{m\times n} \times \mathbb{R}^{m} \times \mathbb{R}^{n}$:
\begin{equation*}
\Lambda_k := \big(\tau_k\mathcal{I}_{\widetilde m, \widetilde n}, \,\tau_k\mathcal{I}_m, \,\tau_k\mathcal{I}_n, \,\mathcal{I}_{m, n}, \,\mathcal{I}_m, \,\mathcal{I}_n\big)
\end{equation*}
such that for any $\big(W, \bm{u}, \bm{v}, X, \bm{y}, \bm{z}\big)\in \mathbb{R}^{\widetilde m\times \widetilde n} \times \mathbb{R}^{m} \times \mathbb{R}^{n} \times  \mathbb{R}^{m\times n} \times \mathbb{R}^{m} \times \mathbb{R}^{n}$,
\begin{equation*}
\Lambda_k\big(W, \bm{u}, \bm{v}, X, \bm{y}, \bm{z}\big) = \big(\tau_k W, \,\tau_k\bm{u}, \,\tau_k\bm{v}, \,X, \,\bm{y},\, \bm{z}\big),\quad \forall k\geq 0.
\end{equation*}
Clearly, $\Lambda_k$ is nonsingular, and hence $M_k:= \Lambda_k^{-1}$ for $k\geq 0$ is well-defined.

Now, we consider the sequences $\big\{( \widetilde{W}^{k}, \widetilde{\bm{u}}^{k}, \widetilde{\bm{v}}^{k},\widetilde{X}^{k}, \widetilde{\bm{y}}^{k}, \widetilde{\bm{z}}^{k})\big\}$ and $\big\{(W^k, \bm{u}^{k}, \bm{v}^{k}, X^k, \bm{y}^{k}, \bm{z}^{k})\big\}$ generated by the ciPALM. Using \eqref{inexcond-ciALM} with some manipulations, we can obtain that
\begin{equation}\label{eqvicond1}
\bm{d}^{k+1} := \big(\Delta^{k+1}-\tau_k\sigma_k^{-1}\Delta_d^{k+1}, \,-\sigma_k^{-1}\Delta_p^{k+1} \big) \in \mathcal{T}_\ell\big(\widetilde{W}^{k+1}, \widetilde{\bm{u}}^{k+1}, \widetilde{\bm{v}}^{k+1}, \widetilde{X}^{k+1}, \widetilde{\bm{y}}^{k+1}, \widetilde{\bm{z}}^{k+1}\big)
\end{equation}
and
\begin{equation}\label{eqvicond2}
\begin{aligned}
&~\left\|\sigma_k M_k \bm{d}^{k+1} + \big(\widetilde{W}^{k}, \widetilde{\bm{u}}^{k}, \widetilde{\bm{v}}^{k},\widetilde{X}^{k}, \widetilde{\bm{y}}^{k}, \widetilde{\bm{z}}^{k}\big) - \big(W^k, \bm{u}^{k}, \bm{v}^{k}, X^k, \bm{y}^{k}, \bm{z}^{k}\big)\right\|^2_{\Lambda_k} \\[3pt]
= &~ \tau_k^{-1}\sigma_k^2\big\|\Delta^{k+1}\big\|^2
\leq \left(\frac{\sigma_k}{\min(\sqrt{\tau_k},\,1)}\big\|\Delta^{k+1}\big\|\right)^2 \leq \rho^2 \left(\tau_k\big\|\Delta_d^{k+1}\big\|^2
+ \big\|\Delta_p^{k+1}\big\|^2\right) \\[3pt]
=&~ \rho^2\left\|\big(\widetilde{W}^{k+1}, \widetilde{\bm{u}}^{k+1}, \widetilde{\bm{v}}^{k+1}, \widetilde{X}^{k+1}, \widetilde{\bm{y}}^{k+1}, \widetilde{\bm{z}}^{k+1}\big) - \big(W^k, \bm{u}^{k}, \bm{v}^{k}, X^k, \bm{y}^{k}, \bm{z}^{k}\big)\right\|^2_{\Lambda_k}.
\end{aligned}
\end{equation}
Moreover, by the updates of $\big(\bm{u}^{k+1}, \bm{v}^{k+1}, W^{k+1}, X^{k+1}, \bm{x}^{k+1}\big)$ in \textbf{Step 2}, we further have that
\begin{align*}
W^{k+1} &= W^k - \tau_k^{-1}\sigma_k\big(\Delta_W^{k+1} - \tau_k\sigma_k^{-1}(\widetilde{W}^{k+1}-W^k)\big), \\
\bm{u}^{k+1} &= \bm{u}^k - \tau_k^{-1}\sigma_k\big(\Delta_u^{k+1} - \tau_k\sigma_k^{-1}(\widetilde{\bm{u}}^{k+1}-\bm{u}^k)\big), \\
\bm{v}^{k+1} &= \bm{v}^k - \tau_k^{-1}\sigma_k\big(\Delta_v^{k+1} - \tau_k\sigma_k^{-1}(\widetilde{\bm{v}}^{k+1}-\bm{v}^k)\big), \\
X^{k+1} &= X^k - \sigma_k\big(\sigma_k^{-1}(X^k - \widetilde{X}^{k+1})\big),
\\
\bm{y}^{k+1} &= \bm{y}^k - \sigma_k\big(\sigma_k^{-1}(\bm{y}^k-\widetilde{\bm{y}}^{k+1})\big), \\
\bm{z}^{k+1} &= \bm{z}^k - \sigma_k\big(\sigma_k^{-1}(\bm{z}^k-\widetilde{\bm{z}}^{k+1})\big),
\end{align*}
and hence
\begin{equation}\label{eqvicond3}
\big(W^{k+1}, \bm{u}^{k+1}, \bm{v}^{k+1}, X^{k+1}, \bm{y}^{k+1}, \bm{z}^{k+1}\big) = \big(W^{k}, \bm{u}^{k}, \bm{v}^{k}, X^{k}, \bm{y}^{k}, \bm{z}^k\big)
- \sigma_kM_k\bm{d}^{k+1}.
\end{equation}
In view of \eqref{eqvicond1}, \eqref{eqvicond2} and \eqref{eqvicond3}, one can see that the ciPALM in Algorithm \ref{algo:ciPALM} is indeed equivalent to the VHPE in Algorithm \ref{algo:VHPE} for solving the monotone inclusion problem
\begin{equation*}
0\in\mathcal{T}_{\ell}\big(W, \bm{u},\bm{v}, X, \bm{y}, \bm{z}\big)
\end{equation*}
with $\bm{x}^k:=\big(W^k, \bm{u}^{k}, \bm{v}^{k}, X^k, \bm{y}^{k}, \bm{z}^k\big)$, $\widetilde{\bm{x}}^k=\big(\widetilde{W}^{k}, \widetilde{\bm{u}}^{k}, \widetilde{\bm{v}}^{k}, \widetilde{X}^{k}, \widetilde{\bm{y}}^{k}, \widetilde{\bm{z}}^{k}\big)$, $M_k=\Lambda_k^{-1}$, $c_k=\sigma_k$ and $\varepsilon_k\equiv0$, for $k\geq 0$. Then, we can obtain the following convergence results of the ciPALM by applying the convergence results of the VHPE.

\begin{theorem}[\textbf{Global convergence of the ciPALM}]\label{Thm:conv}
Suppose that $\mathcal{T}_{\ell}^{-1}(0)\neq\emptyset$ (namely, there exists a saddle point), $\inf_{k\geq0}\{\sigma_k\}>0$, and the positive sequence $\{\tau_k\}$ satisfies that
\begin{equation*}
\tau_k\geq\tau_{\min}>0, ~~\tau_{k+1} \leq (1+\eta_k)\tau_k ~~ \mbox{with}~ \eta_k>0 ~\mbox{and} ~{\textstyle\sum^{\infty}_{k=0}}\,\eta_k < \infty.
\end{equation*}
Let $\big\{\big(W^k, \bm{u}^{k}, \bm{v}^{k}, X^k, \bm{y}^{k}, \bm{z}^k\big)\big\}$ be the sequence generated by the ciPALM in Algorithm \ref{algo:ciPALM}. Then, $\big\{\big(W^k, \bm{u}^{k}, \bm{v}^{k}, X^k, \bm{y}^{k}, \bm{z}^k\big)\big\}$ is bounded. Moreover, $\big\{\big(W^k, \bm{u}^k, \bm{v}^k\big)\big\}$ converges to an optimal solution of problem \eqref{eq:dmain} and $\big\{\big(X^k, \bm{y}^k, \bm{z}^k\big)\big\}$ converges to an optimal solution of problem \eqref{eq:pmain}.
\end{theorem}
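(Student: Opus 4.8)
The plan is to exploit the equivalence between the ciPALM and the VHPE that was already established through \eqref{eqvicond1}, \eqref{eqvicond2} and \eqref{eqvicond3}: the sequences $\big\{(W^k,\bm{u}^k,\bm{v}^k,X^k,\bm{y}^k,\bm{z}^k)\big\}$ and $\big\{(\widetilde{W}^{k+1},\widetilde{\bm{u}}^{k+1},\widetilde{\bm{v}}^{k+1},\widetilde{X}^{k+1},\widetilde{\bm{y}}^{k+1},\widetilde{\bm{z}}^{k+1})\big\}$ generated by Algorithm \ref{algo:ciPALM} coincide with those generated by the VHPE in Algorithm \ref{algo:VHPE} applied to the maximal monotone inclusion $0\in\mathcal{T}_{\ell}(W,\bm{u},\bm{v},X,\bm{y},\bm{z})$ under the identifications $c_k=\sigma_k$, $M_k=\Lambda_k^{-1}$, $\varepsilon_k\equiv 0$, $\bm{x}^k=(W^k,\bm{u}^k,\bm{v}^k,X^k,\bm{y}^k,\bm{z}^k)$ and $\widetilde{\bm{x}}^{k+1}=(\widetilde{W}^{k+1},\widetilde{\bm{u}}^{k+1},\widetilde{\bm{v}}^{k+1},\widetilde{X}^{k+1},\widetilde{\bm{y}}^{k+1},\widetilde{\bm{z}}^{k+1})$. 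Once this is in place, the conclusion will follow from Theorem \ref{thm-VHPEcon}, provided that Assumption \ref{assmp-cM} is verified and that $\Omega:=\mathcal{T}_{\ell}^{-1}(0)\neq\emptyset$; the latter is exactly the assumed existence of a saddle point, since $\mathcal{T}_{\ell}$ is maximal monotone.

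The first real step is to check Assumption \ref{assmp-cM}. Part (i) is immediate: $c_k=\sigma_k\geq\inf_{j\geq0}\sigma_j>0$. For part (ii), recall that $\Lambda_k$ is block-diagonal with blocks $\tau_k$ (on the $W,\bm{u},\bm{v}$ coordinates) and $1$ (on the $X,\bm{y},\bm{z}$ coordinates), so $M_k=\Lambda_k^{-1}$ is self-adjoint positive definite with the same block structure and blocks $\tau_k^{-1}$ and $1$. Hence the relation $\frac{1}{1+\eta_k}M_k\preceq M_{k+1}$ decouples into $\frac{1}{(1+\eta_k)\tau_k}\leq\frac{1}{\tau_{k+1}}$ on the first block — which is precisely the hypothesis $\tau_{k+1}\leq(1+\eta_k)\tau_k$ — and $\frac{1}{1+\eta_k}\leq1$ on the second, both of which hold. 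Finally, the eigenvalues of $M_k$ are $\tau_k^{-1}$ and $1$; since $\{\eta_k\}$ is summable, iterating $\tau_{k+1}\leq(1+\eta_k)\tau_k$ yields $\tau_{\min}\leq\tau_k\leq\tau_0\prod_{i=0}^{\infty}(1+\eta_i)=:\bar{\tau}<\infty$ for all $k$, so that $\min\{1,\bar{\tau}^{-1}\}\leq\lambda_{\min}(M_k)\leq\lambda_{\max}(M_k)\leq\max\{1,\tau_{\min}^{-1}\}$. Taking $\underline{\lambda}:=\min\{1,\bar{\tau}^{-1}\}$ and $\overline{\lambda}:=\max\{1,\tau_{\min}^{-1}\}$ completes the verification of part (ii). This uniform two-sided eigenvalue bound, which hinges on the telescoping product estimate for $\tau_k$, is the one place where a little care is needed and is the main (admittedly mild) obstacle.

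With Assumption \ref{assmp-cM} verified and $\Omega\neq\emptyset$, Theorem \ref{thm-VHPEcon} applies to the VHPE reformulation: part (i) gives that $\big\{(W^k,\bm{u}^k,\bm{v}^k,X^k,\bm{y}^k,\bm{z}^k)\big\}$ is bounded, and part (iv) gives a limit point $(W^{\infty},\bm{u}^{\infty},\bm{v}^{\infty},X^{\infty},\bm{y}^{\infty},\bm{z}^{\infty})$, to which the whole sequence converges, with $0\in\mathcal{T}_{\ell}(W^{\infty},\bm{u}^{\infty},\bm{v}^{\infty},X^{\infty},\bm{y}^{\infty},\bm{z}^{\infty})$. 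By the definition of $\mathcal{T}_{\ell}$, this inclusion is equivalent to $0\in\partial\ell(W^{\infty},\bm{u}^{\infty},\bm{v}^{\infty},X^{\infty},\bm{y}^{\infty},\bm{z}^{\infty})$, i.e.\ $(W^{\infty},\bm{u}^{\infty},\bm{v}^{\infty},X^{\infty},\bm{y}^{\infty},\bm{z}^{\infty})$ is a saddle point of $\ell$. As recalled before the statement of the theorem, this forces $(W^{\infty},\bm{u}^{\infty},\bm{v}^{\infty})$ to solve problem \eqref{eq:dmain} and $(X^{\infty},\bm{y}^{\infty},\bm{z}^{\infty})$ to solve problem \eqref{eq:pmain}, which is the desired conclusion.
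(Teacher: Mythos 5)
Your proposal is correct and follows essentially the same route as the paper: verify Assumption \ref{assmp-cM} for $c_k=\sigma_k$ and $M_k=\Lambda_k^{-1}$ using the telescoping bound $\tau_{\min}\le\tau_k\le\tau_0\prod_{i=0}^{\infty}(1+\eta_i)$, then invoke the ciPALM--VHPE equivalence and Theorem \ref{thm-VHPEcon} to get boundedness and convergence to a zero of $\mathcal{T}_{\ell}$, i.e.\ a saddle point of $\ell$. Your write-up is in fact slightly more explicit than the paper's about how the block structure of $\Lambda_k$ decouples the condition $\frac{1}{1+\eta_k}M_k\preceq M_{k+1}$, but the substance is identical.
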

\begin{proof}
Using the conditions on $\{\tau_k\}$, we see that
$0<\tau_{\min}\leq\tau_k\leq\Pi^{\infty}_{i=0}(1+\eta_i)\tau_0<\infty$ for all $k\geq0$. This together with $\tau_{k+1} \leq (1+\eta_k)\tau_k$ implies that $(1+\eta_k)^{-1}\Lambda_k^{-1}\preceq\Lambda_{k+1}^{-1}$ and $0<\min\big\{\Pi^{\infty}_{i=0}(1+\eta_i)^{-1}\tau_0^{-1},\,1\big\}
\leq\lambda_{\min}(\Lambda_k^{-1})\leq\lambda_{\max}(\Lambda_k^{-1})
\leq\max\big\{\tau_{\min}^{-1},\,1\big\}$ for all $k\geq0$. Since the ciPALM in Algorithm \ref{algo:ciPALM} is equivalent to the VHPE in Algorithm \ref{algo:VHPE} for solving $0\in\mathcal{T}_{\ell}\big(W, \bm{u}, \bm{v}, X, \bm{y}, \bm{z}\big)$ (see from \eqref{eqvicond1}, \eqref{eqvicond2} and \eqref{eqvicond3}), it then follows from Theorem \ref{thm-VHPEcon} that the sequence $\big\{\big(W^k, \bm{u}^{k}, \bm{v}^{k}, X^k, \bm{y}^{k}, \bm{z}^k\big)\big\}$ is bounded and converges to a point $\big(W^\infty, \bm{u}^{\infty},\bm{v}^\infty, X^\infty, \bm{y}^{\infty}, \bm{z}^{\infty}\big)$ such that $0\in\mathcal{T}_{\ell}\big(W^\infty, \bm{u}^{\infty},\bm{v}^\infty, X^\infty, \bm{y}^{\infty}, \bm{z}^{\infty}\big)$. Thus, we obtain the desired results and the proof is completed.
\end{proof}

Moreover, under an additional error-bound condition, we can also study the convergence rate of the ciPALM as follows.

\begin{theorem}[\textbf{Linear convergence of the ciPALM}]\label{Thm:convrate}
Suppose that $\mathcal{T}_{\ell}^{-1}(0)\neq\emptyset$ (namely, there exists a saddle point), $\inf_{k\geq0}\{\sigma_k\}>0$, and the positive sequence $\{\tau_k\}$ satisfies that
\begin{equation*}
\tau_k\geq\tau_{\min}>0, ~~\tau_{k+1} \leq (1+\eta_k)\tau_k ~~ \mbox{with}~ \eta_k>0 ~\mbox{and} ~{\textstyle\sum^{\infty}_{k=0}}\,\eta_k < \infty.
\end{equation*}
Let $\big\{\bm{x}^k:=\big(W^k, \bm{u}^{k}, \bm{v}^{k}, X^k, \bm{y}^{k}, \bm{z}^k\big)\big\}$ be the sequence generated by the ciPALM in Algorithm \ref{algo:ciPALM}. Suppose further that $\mathcal{T}_{\ell}$ satisfies Assumption \ref{assmp-errbdweak} associated with $r:=\sqrt{\max\big\{\tau_{\min}^{-1},\,1\big\}}\prod^{\infty}_{i=0} (1+\eta_i)\,\mathrm{dist}_{\Lambda_0}\big(\bm{x}^0,\mathcal{T}_{\ell}^{-1}(0)\big)$.
Then, for sufficiently small $\rho$ and sufficiently large $\sigma_k$, the sequence $\{\bm{x}^k\}$ converges to an element of $\mathcal{T}_{\ell}^{-1}(0)$ at a linear rate.
\end{theorem}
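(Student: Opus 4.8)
The plan is to obtain the linear convergence of the ciPALM directly from the corresponding convergence-rate result for the VHPE, namely Theorem~\ref{thm-VHPEconrate}, exactly as the global convergence in Theorem~\ref{Thm:conv} was deduced from Theorem~\ref{thm-VHPEcon}. First I would recall the equivalence established in \eqref{eqvicond1}, \eqref{eqvicond2} and \eqref{eqvicond3}: the ciPALM iterates $\bm{x}^k=\big(W^k,\bm{u}^k,\bm{v}^k,X^k,\bm{y}^k,\bm{z}^k\big)$ coincide with those of the VHPE in Algorithm~\ref{algo:VHPE} applied to the maximal monotone operator $\mathcal{T}_\ell$, with the identifications $M_k=\Lambda_k^{-1}$, $c_k=\sigma_k$, $\varepsilon_k\equiv0$, and $\widetilde{\bm{x}}^{k+1}=\big(\widetilde{W}^{k+1},\widetilde{\bm{u}}^{k+1},\widetilde{\bm{v}}^{k+1},\widetilde{X}^{k+1},\widetilde{\bm{y}}^{k+1},\widetilde{\bm{z}}^{k+1}\big)$. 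The relative error inequality \eqref{inexcond-ciALM} is precisely the VHPE criterion \eqref{VHPE-inexcond} (with $\varepsilon_{k+1}=0$) for this choice of data.

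Next I would verify Assumption~\ref{assmp-cM} for the present data, which is the same verification already carried out in the proof of Theorem~\ref{Thm:conv}: since $\inf_{k\ge0}\sigma_k>0$ we may take $c:=\inf_k\sigma_k$, so $c_k=\sigma_k\ge c>0$; and from $\tau_k\ge\tau_{\min}>0$ together with $\tau_{k+1}\le(1+\eta_k)\tau_k$ and $\sum_k\eta_k<\infty$ we get $0<\tau_{\min}\le\tau_k\le\prod_{i=0}^\infty(1+\eta_i)\,\tau_0<\infty$ for all $k$, hence $\frac{1}{1+\eta_k}\Lambda_k^{-1}\preceq\Lambda_{k+1}^{-1}$ and $0<\underline{\lambda}\le\lambda_{\min}(\Lambda_k^{-1})\le\lambda_{\max}(\Lambda_k^{-1})\le\overline{\lambda}$ with $\underline{\lambda}:=\min\{\prod_{i=0}^\infty(1+\eta_i)^{-1}\tau_0^{-1},\,1\}$ and $\overline{\lambda}:=\max\{\tau_{\min}^{-1},\,1\}$. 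In particular the constant $\overline{\lambda}$ is exactly the one appearing in the radius $r$ stated in the theorem, so the hypothesis that $\mathcal{T}_\ell$ satisfies Assumption~\ref{assmp-errbdweak} with $r=\sqrt{\overline{\lambda}}\prod_{i=0}^\infty(1+\eta_i)\,\mathrm{dist}_{M_0^{-1}}(\bm{x}^0,\Omega)$ (with $\Omega=\mathcal{T}_\ell^{-1}(0)$ and $M_0^{-1}=\Lambda_0$) matches the hypothesis required by Theorem~\ref{thm-VHPEconrate}. Since by assumption $\mathcal{T}_\ell^{-1}(0)\neq\emptyset$, all hypotheses of Theorem~\ref{thm-VHPEconrate} are met.

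I would then invoke Theorem~\ref{thm-VHPEconrate} to conclude that, for all $k\ge0$,
\[
\mathrm{dist}_{\Lambda_{k+1}}\big(\bm{x}^{k+1},\mathcal{T}_\ell^{-1}(0)\big)\;\le\;\mu_k\,\mathrm{dist}_{\Lambda_k}\big(\bm{x}^{k},\mathcal{T}_\ell^{-1}(0)\big),
\]
where $\mu_k$ is given by \eqref{defmuk} with $c_k=\sigma_k$ and the $\underline{\lambda}$ above, and $\mu_k<1$ for sufficiently small $\rho$ and sufficiently large $\sigma_k$; moreover $\limsup_{k\to\infty}\mu_k=\rho/(1-2\rho)<1$ when $\rho<\tfrac13$ and $\sigma_k\uparrow\infty$. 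Combining this contraction with the equivalence of the norms $\mathrm{dist}_{\Lambda_k}(\cdot,\cdot)$ and $\mathrm{dist}(\cdot,\cdot)$ that follows from the uniform spectral bounds $\underline{\lambda}\le\lambda_{\min}(\Lambda_k^{-1})\le\lambda_{\max}(\Lambda_k^{-1})\le\overline{\lambda}$ gives that $\mathrm{dist}\big(\bm{x}^k,\mathcal{T}_\ell^{-1}(0)\big)\to0$ R-linearly; together with the convergence of $\{\bm{x}^k\}$ to some point of $\mathcal{T}_\ell^{-1}(0)$ already guaranteed by Theorem~\ref{Thm:conv}, this yields the claimed linear convergence. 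There is essentially no genuine obstacle here, as the theorem is a corollary; the only point requiring a little care is the bookkeeping that the radius $r$ and the constants $\underline{\lambda},\overline{\lambda}$ used in the ciPALM setting are exactly those produced by plugging $M_k=\Lambda_k^{-1}$ and $c_k=\sigma_k$ into the VHPE framework, so that Theorem~\ref{thm-VHPEconrate} applies verbatim.
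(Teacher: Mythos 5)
Your proposal is correct and follows exactly the paper's route: the paper proves this theorem by directly invoking Theorem \ref{thm-VHPEconrate} via the ciPALM--VHPE equivalence established in \eqref{eqvicond1}--\eqref{eqvicond3}, with the verification of Assumption \ref{assmp-cM} and the identification $\overline{\lambda}=\max\{\tau_{\min}^{-1},1\}$, $M_0^{-1}=\Lambda_0$ being precisely the bookkeeping you carry out. Your write-up simply makes explicit the details that the paper leaves to the reader.
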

\begin{proof}
The desired results can be readily obtained from Theorem \ref{thm-VHPEconrate}.
\end{proof}

Note that, when $\lambda_1=0$ and $\mathcal{K}_r, \,\mathcal{K}_c\subseteq \mathbb{R}^n$ are chosen as the zero spaces or the nonnegative orthants, $\partial p$, $\partial p_r$, and $\partial p_c$ are polyhedral multifunctions, and hence $\mathcal{T}_{\ell}$ is \blue{a polyhedral multifunction}. It then follows from
\cite[Lemma 2 and Remark 1]{li2020asymptotically} that $\mathcal{T}_{\ell}$ satisfies Assumption \ref{assmp-errbdweak} \blue{when $\mathcal{T}_{\ell}^{-1}(0)\neq\emptyset$}.


\section{A semi-smooth Newton method for solving the subproblem}\label{sec:SSN}

As one can see, for the ciPALM to be truly implementable, it is important to design an efficient algorithm for solving the subproblem \eqref{ciPALM-subpro} to find a point $\big(\widetilde{W}^{k+1}, \widetilde{\bm{u}}^{k+1}, \widetilde{\bm{v}}^{k+1}, \widetilde{X}^{k+1}, \widetilde{\bm{y}}^{k+1}, \widetilde{\bm{z}}^{k+1}\big)$ satisfying the inexact condition \eqref{inexcond-ciALM}. In this section, we shall describe how the subproblem \eqref{ciPALM-subpro} can be solved efficiently. For simplicity, we drop the index $k$ and consider the following generic subproblem in the ciPALM with given $\big(\widehat{W}, \widehat{\bm{u}}, \widehat{\bm{v}}, \widehat{X}, \widehat{\bm{y}}, \widehat{\bm{z}}\big)$ and $\tau,\,\sigma>0$:
\begin{equation}\label{subprogen}
\min\limits_{W,\bm{u},\bm{v}}\, \Psi(W,\bm{u},\bm{v}) := \mathcal{L}_{\sigma}\big(W,\bm{u},\bm{v}, \widehat{X}, \widehat{\bm{y}}, \widehat{\bm{z}}\big)
+ \frac{\tau}{2\sigma}\left(\|W-\widehat{W}\|_F^2
+ \|\bm{u}-\widehat{\bm{u}}\|^2 + \|\bm{v}-\widehat{\bm{v}}\|^2\right).
\end{equation}
Since $\Psi$ is strongly convex and continuously differentiable, problem \eqref{subprogen} admits a unique solution $(W^*,\bm{u}^*,\bm{u}^*)$, which can be computed by solving the nonsmooth equation
\begin{equation}\label{subequa}
\nabla\Psi(W,\bm{u},\bm{v})=0, \quad \big(W,\,\bm{u},\,\bm{v}\big)\in  \mathbb{R}^{\widetilde m\times \widetilde n}\times \mathbb{R}^m\times \mathbb{R}^n,
\end{equation}
where
\begin{equation}\label{defPsi}
\begin{aligned}
&~\nabla \Psi(W,\bm{u},\bm{v}) \\
= &~ \begin{pmatrix}
A\mathtt{prox}_{\sigma p}\left(\widehat{X} + \sigma(\bm{u}\bm{1}_n^\top + \bm{1}_m\bm{v}^\top + A^\top W B^\top - C)\right)B - S + \frac{\tau}{\sigma}(W-\widehat{W})\\
\mathtt{prox}_{\sigma p}\left(\widehat{X} + \sigma(\bm{u}\bm{1}_n^\top + \bm{1}_m\bm{v}^\top + A^\top W B^\top - C)\right)\bm{1}_n + \mathtt{prox}_{\sigma p_r}(\widehat{\bm{y}} + \sigma \bm{u}) - \bm{\alpha} + \frac{\tau}{\sigma}(\bm{u} - \widehat{\bm{u}}) \\
\mathtt{prox}_{\sigma p}\left(\widehat{X} + \sigma(\bm{u}\bm{1}_n^\top + \bm{1}_m\bm{v}^\top + A^\top W B^\top - C)\right)^\top \bm{1}_m + \mathtt{prox}_{\sigma p_c}(\widehat{\bm{z}} + \sigma \bm{v}) - \bm{\beta} + \frac{\tau}{\sigma}(\bm{v} - \widehat{\bm{v}})
\end{pmatrix}.
\end{aligned}
\end{equation}
Then, under a proper semi-smoothness property on $\nabla\Psi(\cdot)$, we can apply an efficient semi-smooth Newton method ({\sc Ssn}) for solving the equation \eqref{subequa}. To this end, we first introduce the definition of ``semi-smoothness with respect to a multifunction", which is adopted from \cite{k1988newton,m1977semismooth,qs1993nonsmooth,s2002semismooth}.

\begin{definition}
Let $\mathcal{O}\subset\mathbb{R}^n$ be an open set, $\mathcal{E}:\mathcal{O}\rightrightarrows\mathbb{R}^{m \times n}$ be a nonempty and compact valued, upper-semicontinuous multifunction and $\mathcal{F}:\mathcal{O}\to\mathbb{R}^{m}$ be a locally Lipschitz continuous function.
$\mathcal{F}$ is said to be strongly semi-smooth at $\bm{x}\in\mathcal{O}$ with respect to $\mathcal{E}$ if $\mathcal{F}$ is directionally differentiable at $\bm{x}$ and for any $\mathcal{J}\in\mathcal{E}(\bm{x}+\Delta\bm{x})$ with $\Delta\bm{x}\to0$,
\begin{equation*}
\mathcal{F}(\bm{x}+\Delta\bm{x}) - \mathcal{F}(\bm{x}) - \mathcal{J}\Delta\bm{x} = O(\|\Delta\bm{x}\|^2).
\end{equation*}
Then, $\mathcal{F}$ is said to be a strongly semi-smooth function on $\mathcal{O}$ with respect to $\mathcal{E}$ if it is strongly semi-smooth everywhere in $\mathcal{O}$ with respect to $\mathcal{E}$.
\end{definition}

We next give the following proposition to identify the strong semi-smoothness of $\nabla\Psi(\cdot)$. For notational simplicity, we denote $\mathbb{X}$ as the space of all linear operators from $\mathbb{R}^{m\times n}$ to $\mathbb{R}^{m\times n}$.

\begin{proposition}\label{prop-ss}
Let $\mathcal{X}:\mathbb{R}^{m\times n}\rightrightarrows \mathbb{X}$, $\mathcal{Y}:\mathbb{R}^{m}\rightrightarrows \mathbb{R}^{m\times m}$ and $\mathcal{Z}:\mathbb{R}^{n}\rightrightarrows \mathbb{R}^{n\times n}$ be nonempty, compact valued, and upper-semicontinuous multifunctions such that for any $X\in\mathbb{R}^{m\times n}$, $\bm{y}\in \mathbb{R}^m$ and $\bm{z}\in \mathbb{R}^n$, $\mathcal{X}(X)\subseteq\mathbb{X}$, $\mathcal{Y}(\bm{y})\subseteq \mathbb{R}^{m\times m}$ and $\mathcal{Z}(\bm{z})\subseteq \mathbb{R}^{n\times n}$ are three sets of self-adjoint positive semidefinite linear operators, respectively. Suppose that $\mathtt{prox}_{\sigma p}(\cdot)$, $\mathtt{prox}_{\sigma p_r}(\cdot)$ and $\mathtt{prox}_{\sigma p_c}(\cdot)$ are strongly semi-smooth with respect to $\mathcal{X}$, $\mathcal{Y}$ and $\mathcal{Z}$, respectively. Then, $\nabla\Psi(\cdot)$ is strongly semi-smooth with respect to $\widehat{\partial}(\nabla\Psi)(\cdot)$, where for given $(W, \bm{u}, \bm{v}) \in \mathbb{R}^{\widetilde m\times \widetilde n} \times \mathbb{R}^m\times \mathbb{R}^n$,
\begin{equation}\label{defpartPsi}
\widehat{\partial}(\nabla \Psi)(W, \bm{u}, \bm{v}) \\
:= \left\{ H_{W, \bm{u},\bm{v}} ~\left\lvert~
\begin{aligned}
\mathcal{J}_{\mathcal{X}} &\in  \mathcal{X}\big(\widehat{X} + \sigma(\bm{u}\bm{1}_n^\top +
\bm{1}_m\bm{v}^\top + A^\top W B^\top - C)\big), \\[2pt]
\mathcal{J}_{\mathcal{Y}} &\in \mathcal{Y}\big(\widehat{\bm{y}} + \sigma \bm{u}\big), \;
\mathcal{J}_{\mathcal{Z}} \in \mathcal{Z}\big(\widehat{\bm{z}} + \sigma \bm{v}\big),
\end{aligned}
\right. \right\},
\end{equation}
and $H_{W, \bm{u}, \bm{v}}$ is a linear operator from $\mathbb{R}^{\widetilde m\times \widetilde n} \times \mathbb{R}^m\times\mathbb{R}^n$ to $\mathbb{R}^{\widetilde m\times \widetilde n} \times \mathbb{R}^m\times\mathbb{R}^n$, defined as
\begin{equation*}
H_{W, \bm{u}, \bm{v}}
\begin{pmatrix}
\Delta W \\ \Delta \bm{u} \\ \Delta\bm{v}
\end{pmatrix} :=
\begin{pmatrix}
\sigma A\left[\mathcal{J}_{\mathcal{X}}\left(\Delta\bm{u}\bm{1}_n^\top + \bm{1}_m(\Delta\bm{v})^\top + A^\top \Delta  W B^\top \right)\right]B + \frac{\tau}{\sigma}\Delta W \\[3pt]
\sigma \left[\mathcal{J}_{\mathcal{X}}\left(\Delta\bm{u}\bm{1}_n^\top + \bm{1}_m(\Delta\bm{v})^\top + A^\top \Delta  W B^\top \right)\right]\bm{1}_n + \sigma \mathcal{J}_{\mathcal{Y}}(\Delta \bm{u}) + \frac{\tau}{\sigma}\Delta\bm{u}  \\[3pt]
\sigma \left[\mathcal{J}_{\mathcal{X}}\left(\Delta\bm{u}\bm{1}_n^\top + \bm{1}_m(\Delta\bm{v})^\top + A^\top \Delta WB^\top \right)\right]^\top\bm{1}_m + \sigma \mathcal{J}_{\mathcal{Z}}(\Delta\bm{v})  + \frac{\tau}{\sigma}\Delta\bm{v}
\end{pmatrix},
\end{equation*}
for all $\big(\Delta W, \Delta\bm{u}, \Delta\bm{v}\big)\in\mathbb{R}^{\widetilde m\times \widetilde n} \times \mathbb{R}^m\times\mathbb{R}^n$. Moreover, for any $(W, \bm{u}, \bm{v})\in \mathbb{R}^{\widetilde m\times \widetilde n} \times \mathbb{R}^m\times\mathbb{R}^n$, every linear mapping in the set $\widehat{\partial}(\nabla \Psi)\big(W, \bm{u}, \bm{v}\big)$ is self-adjoint positive definite.
\end{proposition}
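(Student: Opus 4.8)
The plan is to prove Proposition \ref{prop-ss} in two parts: first the strong semi-smoothness of $\nabla\Psi$ with respect to $\widehat{\partial}(\nabla\Psi)$, and then the self-adjoint positive definiteness of every element of $\widehat{\partial}(\nabla\Psi)(W,\bm{u},\bm{v})$. For the first part, I would observe from the explicit formula \eqref{defPsi} that $\nabla\Psi$ is built from $\mathtt{prox}_{\sigma p}$, $\mathtt{prox}_{\sigma p_r}$, and $\mathtt{prox}_{\sigma p_c}$ composed with affine maps, plus additional affine terms. Specifically, writing $\mathcal{A}(W,\bm{u},\bm{v}):=\widehat{X}+\sigma(\bm{u}\bm{1}_n^\top+\bm{1}_m\bm{v}^\top+A^\top WB^\top - C)$, the first block of $\nabla\Psi$ is $A\,\mathtt{prox}_{\sigma p}(\mathcal{A}(W,\bm{u},\bm{v}))\,B - S + \frac{\tau}{\sigma}(W-\widehat W)$, and similarly for the other two blocks, with the affine inputs $\widehat{\bm{y}}+\sigma\bm{u}$ and $\widehat{\bm{z}}+\sigma\bm{v}$ for the $p_r$ and $p_c$ terms. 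The key facts I would invoke are: (i) affine maps are trivially (strongly) semi-smooth with respect to their constant derivative; (ii) the composition of a strongly semi-smooth function with an affine map is strongly semi-smooth, with the natural chain-rule surrogate Jacobians; (iii) sums and compositions with fixed linear operators ($X\mapsto AXB$, $X\mapsto X\bm{1}_n$, $X\mapsto X^\top\bm{1}_m$) of strongly semi-smooth functions are strongly semi-smooth. Chaining these, each of the three components of $\nabla\Psi$ is strongly semi-smooth with respect to the indicated surrogate set, and the surrogate Jacobian is exactly the operator $H_{W,\bm{u},\bm{v}}$ obtained by substituting $\mathcal{J}_{\mathcal{X}}\in\mathcal{X}(\mathcal{A}(W,\bm{u},\bm{v}))$, $\mathcal{J}_{\mathcal{Y}}\in\mathcal{Y}(\widehat{\bm{y}}+\sigma\bm{u})$, $\mathcal{J}_{\mathcal{Z}}\in\mathcal{Z}(\widehat{\bm{z}}+\sigma\bm{v})$ into the block formula, because $\tfrac{d}{d(W,\bm{u},\bm{v})}\mathcal{A}(W,\bm{u},\bm{v})$ applied to $(\Delta W,\Delta\bm{u},\Delta\bm{v})$ equals $\sigma(\Delta\bm{u}\bm{1}_n^\top+\bm{1}_m(\Delta\bm{v})^\top+A^\top\Delta W B^\top)$. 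I would also verify that $\widehat{\partial}(\nabla\Psi)$ is nonempty, compact-valued, and upper-semicontinuous: nonemptiness and compactness follow from those properties of $\mathcal{X},\mathcal{Y},\mathcal{Z}$ and continuity of the affine inputs, and upper-semicontinuity follows from composing an upper-semicontinuous multifunction with a continuous map together with the fact that the map $(\mathcal{J}_{\mathcal{X}},\mathcal{J}_{\mathcal{Y}},\mathcal{J}_{\mathcal{Z}})\mapsto H_{W,\bm{u},\bm{v}}$ is (jointly) continuous.

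For the second part, I would fix $(W,\bm{u},\bm{v})$ and an arbitrary $H_{W,\bm{u},\bm{v}}\in\widehat{\partial}(\nabla\Psi)(W,\bm{u},\bm{v})$ determined by self-adjoint positive semidefinite operators $\mathcal{J}_{\mathcal{X}},\mathcal{J}_{\mathcal{Y}},\mathcal{J}_{\mathcal{Z}}$, and compute the quadratic form $\langle H_{W,\bm{u},\bm{v}}(\Delta W,\Delta\bm{u},\Delta\bm{v}),\,(\Delta W,\Delta\bm{u},\Delta\bm{v})\rangle$ for an arbitrary nonzero direction. The crucial algebraic observation is that the ``cross terms'' reorganize: letting $\Theta:=\Delta\bm{u}\bm{1}_n^\top+\bm{1}_m(\Delta\bm{v})^\top+A^\top\Delta W B^\top\in\mathbb{R}^{m\times n}$, one checks using the adjoint identities $\langle AXB,Y\rangle=\langle X,A^\top Y B^\top\rangle$, $\langle X\bm{1}_n,\bm{u}\rangle=\langle X,\bm{u}\bm{1}_n^\top\rangle$, and $\langle X^\top\bm{1}_m,\bm{v}\rangle=\langle X,\bm{1}_m\bm{v}^\top\rangle$ that the sum of the three inner products of the $\mathcal{J}_{\mathcal{X}}$-contributions with $(\Delta W,\Delta\bm{u},\Delta\bm{v})$ collapses to $\sigma\langle\mathcal{J}_{\mathcal{X}}\Theta,\Theta\rangle$. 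Hence
\begin{equation*}
\big\langle H_{W,\bm{u},\bm{v}}(\Delta W,\Delta\bm{u},\Delta\bm{v}),\,(\Delta W,\Delta\bm{u},\Delta\bm{v})\big\rangle
= \sigma\langle\mathcal{J}_{\mathcal{X}}\Theta,\Theta\rangle + \sigma\langle\mathcal{J}_{\mathcal{Y}}\Delta\bm{u},\Delta\bm{u}\rangle + \sigma\langle\mathcal{J}_{\mathcal{Z}}\Delta\bm{v},\Delta\bm{v}\rangle + \frac{\tau}{\sigma}\big(\|\Delta W\|_F^2 + \|\Delta\bm{u}\|^2 + \|\Delta\bm{v}\|^2\big).
\end{equation*}
Since $\mathcal{J}_{\mathcal{X}},\mathcal{J}_{\mathcal{Y}},\mathcal{J}_{\mathcal{Z}}\succeq0$ and $\tau,\sigma>0$, the right-hand side is $\geq\frac{\tau}{\sigma}(\|\Delta W\|_F^2+\|\Delta\bm{u}\|^2+\|\Delta\bm{v}\|^2)>0$ whenever $(\Delta W,\Delta\bm{u},\Delta\bm{v})\neq0$, which proves positive definiteness. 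Self-adjointness of $H_{W,\bm{u},\bm{v}}$ follows from the same bilinear bookkeeping: the $\mathcal{J}_{\mathcal{X}}$-part is the composition $\mathcal{P}^*\circ(\sigma\mathcal{J}_{\mathcal{X}})\circ\mathcal{P}$ where $\mathcal{P}(\Delta W,\Delta\bm{u},\Delta\bm{v})=\Theta$ and $\mathcal{P}^*$ is its adjoint, hence self-adjoint since $\mathcal{J}_{\mathcal{X}}$ is; the $\mathcal{J}_{\mathcal{Y}}$, $\mathcal{J}_{\mathcal{Z}}$, and $\frac{\tau}{\sigma}I$ parts are obviously self-adjoint.

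I do not expect any serious obstacle here; the proposition is essentially a bookkeeping exercise. The only point requiring a little care is part one, namely stating precisely the calculus rules for ``strong semi-smoothness with respect to a multifunction'' under composition with affine maps and under summation, and confirming that the surrogate-Jacobian set produced by the chain rule is exactly the set $\widehat{\partial}(\nabla\Psi)$ as written in \eqref{defpartPsi} rather than something merely containing it — this amounts to tracking the derivative of the affine map $\mathcal{A}$ and verifying the formula for $H_{W,\bm{u},\bm{v}}$ matches term-by-term. These composition rules are standard (and can be cited from \cite{qs1993nonsmooth,s2002semismooth} or proved directly in one line from the definition), so the main body of the proof is the short self-adjoint-positive-definite computation above, after which one simply assembles the pieces.
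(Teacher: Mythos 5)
Your proposal is correct and follows essentially the same route as the paper's proof: strong semi-smoothness is obtained by composing the strongly semi-smooth proximal mappings with the affine inner maps and summing, and positive definiteness comes from the $\frac{\tau}{\sigma}\mathcal{I}$ term together with the positive semidefiniteness of $\mathcal{J}_{\mathcal{X}},\mathcal{J}_{\mathcal{Y}},\mathcal{J}_{\mathcal{Z}}$. Your explicit quadratic-form computation, in which the cross terms collapse to $\sigma\langle\mathcal{J}_{\mathcal{X}}\Theta,\Theta\rangle$ via the adjoint identities, is exactly the verification the paper leaves as ``clear,'' so nothing is missing.
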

\begin{proof}
First, by definitions of $\mathcal{X}$, $\mathcal{Y}$ and $\mathcal{Z}$, for any $\big(W, \bm{u}, \bm{v}\big)$, every linear operator in the set $\mathcal{X}\big(\widehat{X} + \sigma(\bm{u}\bm{1}_n^\top + \bm{1}_m\bm{v}^\top + A^\top W B^\top - C)\big)$, $\mathcal{Y}\big(\widehat{\bm{y}}+\sigma\bm{u}\big)$ or $\mathcal{Z}\big(\widehat{\bm{z}}+\sigma\bm{v}\big)$) is self-adjoint and positive semidefinite. Since $\tau,\;\sigma > 0$, it is clear that every matrix in the set $\widehat{\partial}(\nabla \Psi)(W, \bm{u}, \bm{v})$ is self-adjoint and positive definite. Moreover, since $\mathtt{prox}_{\sigma p}(\cdot)$ is strongly semi-smooth with respect to $\mathcal{X}$, we see that, for any $(W, \bm{u}, \bm{v})$ and $\mathcal{J}_{\mathcal{X}} \in \mathcal{X}\big(\widehat{X} + \sigma((\bm{u}+\Delta \bm{u}) \bm{1}_n^\top + \bm{1}_m(\bm{v}+\Delta\bm{v})^\top + A^\top (W + \Delta W)B^\top - C)\big)$ with $\Delta W\to 0$, $\Delta\bm{u}\to0$ and $\Delta\bm{v}\to 0$, it holds that
\begin{equation*}
\begin{aligned}
&~\mathtt{prox}_{\sigma p}\left(\widehat{X} + \sigma\big((\bm{u}+\Delta \bm{u}) \bm{1}_n^\top + \bm{1}_m(\bm{v}+\Delta\bm{v})^\top + A^\top (W + \Delta W)B^\top - C\big)\right) \\
&\quad - \mathtt{prox}_{\sigma p}\left(\widehat{X} + \sigma\big(\bm{u}\bm{1}_n^\top + \bm{1}_m\bm{v}^\top + A^\top WB^\top - C\big)\right) \\
&\quad - \mathcal{J}_{\mathcal{X}}\left(\sigma\big(\Delta\bm{u}\bm{1}_n^\top + \bm{1}_m(\Delta\bm{v})^\top + A^\top \Delta W B^\top\big)\right)  \\
=&~ O\left(\big\|\sigma\big(\Delta\bm{u}\bm{1}_n^\top + \bm{1}_m(\Delta\bm{v})^\top + A^\top\Delta  WB^\top\big)\big\|^2\right)
= O\left(\big\|(\Delta W, \Delta\bm{u}, \Delta\bm{v})\big\|^2\right).
\end{aligned}
\end{equation*}
Similarly, we can verify that, for any $\mathcal{J}_{\mathcal{Y}}\in \mathcal{Y}\big(\widehat{\bm{y}}+\sigma(\bm{u}+\Delta\bm{u})\big)$ and $\mathcal{J}_{\mathcal{Z}}\in\mathcal{Z}\big(\widehat{\bm{z}}+\sigma(\bm{v} + \Delta\bm{v})\big)$,
\begin{equation*}
\begin{aligned}
&\; \mathtt{prox}_{\sigma p_r}\big(\widehat{\bm{y}} + \sigma(\bm{u} + \Delta \bm{u})\big) - \mathtt{prox}_{\sigma p_r}\big(\widehat{\bm{y}} + \sigma\bm{u}\big) - \mathcal{J}_{\mathcal{Y}}\big(\sigma\Delta\bm{u}\big) = O\big(\norm{\Delta \bm{u}}^2\big), \\
&\; \mathtt{prox}_{\sigma p_c}\big(\widehat{\bm{z}} + \sigma(\bm{v} + \Delta \bm{v})\big) - \mathtt{prox}_{\sigma p_c}\big(\widehat{\bm{z}} + \sigma\bm{v}\big) - \mathcal{J}_{\mathcal{Z}}\big(\sigma\Delta\bm{v}\big) = O\big(\norm{\Delta \bm{v}}^2\big).
\end{aligned}
\end{equation*}
Using these facts, it is easy to verify that, for any $(W, \bm{u}, \bm{v})$ and $H\in\widehat{\partial} (\nabla \Psi)\big(W + \Delta W, \bm{u}+\Delta\bm{u}, \bm{v}+\Delta\bm{v}\big)$ with $\Delta W\to 0$, $\Delta\bm{u}\to0$ and $\Delta\bm{v}\to 0$, it holds that
\begin{equation*}
\nabla\Psi(W + \Delta W, \bm{u}+\Delta\bm{u}, \bm{v}+\Delta\bm{v}) - \nabla\Psi(W, \bm{u}, \bm{v}) - H(\Delta W,\Delta\bm{u}, \Delta \bm{v}) = O\big(\norm{(\Delta W,\Delta\bm{u}, \Delta\bm{v})}^2\big),
\end{equation*}
which implies that $\nabla\Psi(\cdot)$ is strongly semi-smooth with respect to $\widehat{\partial}(\nabla\Psi)(\cdot)$.
\end{proof}

From Proposition \ref{prop-ss}, we see that the strong semi-smoothness of $\nabla\Phi(\cdot)$ with respect to $\widehat{\partial}(\nabla\Psi)$ can be implied by the strong semi-smoothness of $\mathtt{prox}_{\sigma p}(\cdot)$, $\mathtt{prox}_{\sigma p_r}(\cdot)$ and $\mathtt{prox}_{\sigma p_c}(\cdot)$ with respect to $\mathcal{X}$, $\mathcal{Y}$ and $\mathcal{Z}$, respectively. For many popular regularizers with proper choices of $\mathcal{X}$, $\mathcal{Y}$ and/or $\mathcal{Z}$, it is well-known that the corresponding proximal mappings are strongly semi-smooth (see examples later). With these preparations, we are now ready to present a general framework of the semi-smooth Newton ({\sc Ssn}) method for solving the equation \eqref{subequa} in Algorithm~\ref{algo:SSN}, provided that $\nabla\Psi(\cdot)$ is strongly semi-smooth with respect to $\widehat{\partial}(\nabla\Psi)$. Note that the main computational task in {\sc Ssn} is to solve a sequence of linear systems as described in \textbf{Step 1}. \blue{In our numerical implementation, when the size of the coefficient matrix is moderate (no larger than $4000 \times 4000$ in our experiments), we directly perform the (sparse) Cholesky factorization (e.g., \texttt{chol} provided by {\sc Matlab}) with forward and back substitution to solve the linear system. However, when the problem size becomes larger, factorizing a coefficient matrix (even though it is sparse) is time-consuming. Thus, in this case, we apply the conjugate gradient method (e.g., \texttt{pcg} provided by {\sc Matlab}) instead to approximately solve the linear system.}


\begin{algorithm}[htb!]
\caption{A semi-smooth Newton ({\sc Ssn}) method for solving equation \eqref{subequa}}\label{algo:SSN}
 	
\textbf{Initialization:} Choose $\bar{\eta}\in(0,1)$, $\gamma\in(0,1]$, $\mu\in(0,1/2)$, $\delta\in(0,1)$, and an initial point $(W^{0}, \bm{u}^{0}, \bm{v}^{0})\in\mathbb{R}^{\widetilde m\times \widetilde n} \times \mathbb{R}^m\times \mathbb{R}^n$. Set $j=0$.
 	
\While{a termination criterion is not met,}{ 		
\textbf{Step 1.} Compute $\nabla\Psi\big(W^{j}, \bm{u}^{j}, \bm{v}^{j}\big)$ and select an element $\mathcal{H}_j\in\widehat{\partial}\big(\nabla\Psi)(W^{j} ,\bm{u}^{j}, \bm{v}^{j}\big)$. Solve the linear system
\begin{equation*}
\mathcal{H}_j\big(\Delta W; \Delta\bm{u}; \Delta\bm{v}\big)
= -\nabla\Psi\big(W^{j}, \bm{u}^{j}, \bm{v}^{j}\big),
\end{equation*}
nearly exactly by \blue{the (sparse) Cholesky factorization with forward and backward substitutions}, \textit{or} approximately by the preconditioned conjugate gradient method to find  $\big(\Delta W^j, \Delta\bm{u}^j, \Delta\bm{v}^j\big)$ such that
\begin{equation*}
\big\|\mathcal{H}_j\big(\Delta W^j, \Delta\bm{u}^j, \Delta\bm{v}^j\big)
+ \nabla\Psi\big(W^{j}, \bm{u}^{j}, \bm{v}^{j}\big)\big\|
\leq \min\big(\bar{\eta}, \,\|\nabla\Psi\big(W^{j}, \bm{u}^{j}, \bm{v}^{j}\big)\|^{1+\gamma}\big).
\end{equation*}		

\textbf{Step 2.} (\textbf{Line search}) Find a step size $\alpha_j=\delta^{i_j}$, where $i_j$ is the smallest nonnegative integer $i$ for which
\begin{equation*}
\begin{aligned}
&~\Psi\big( W^{j}+\delta^{i}\Delta W^j, \bm{u}^{j}+\delta^{i}\Delta\bm{u}^j, \bm{v}^{j}+\delta^{i}\Delta\bm{v}\big) \\
\leq &~ \Psi\big(W^{j}, \bm{u}^{j}, \bm{v}^{j}\big) + \mu \delta^{i}\big\langle\nabla\Psi(W^{j}, \bm{u}^{j}, \bm{v}^{j}), \,(\Delta W^j, \Delta\bm{u}^j, \Delta\bm{v}^j)\big\rangle.
\end{aligned}
\end{equation*}
 		
\textbf{Step 3.} Set
$
(W^{j+1}, \bm{u}^{j+1},\bm{v}^{j+1}) = \left(W^j + \alpha_j \Delta W^j, \,\bm{u}^{j} +\alpha_j \Delta \bm{u}^j, \,\bm{v}^{j} +\alpha_j \Delta \bm{v}^j\right).
$

\vspace{1mm}
\textbf{Step 4.} Set $j=j+1$, and go to \textbf{Step 1}. \vspace{1mm}
}
 	
\textbf{Output:} $(W^{j}, \bm{u}^{j}, \bm{v}^{j})$.
\end{algorithm}

In the following, to implement the {\sc Ssn} in Algorithm \ref{algo:SSN}, we characterize $\mathtt{prox}_{\sigma p}(\cdot)$, $\mathtt{prox}_{\sigma p_r}(\cdot)$ and $\mathtt{prox}_{\sigma p_c}(\cdot)$, and choose proper $\mathcal{X}$, $\mathcal{Y}$ and $\mathcal{Z}$ for $\mathcal{R}$ chosen as \eqref{eq:R}, and $\mathcal{T}$ chosen as \eqref{eq-cT}. First, recall that problem \eqref{eq-regOTpro} can be written in the form of  \eqref{eq:pmain} with
\begin{equation*}
p(X) := \lambda_1 \sum_{G\in \mathcal{G}}\omega_G\norm{\bm{x}_G} + \frac{\lambda_2}{2}\norm{X}_F^2 + \delta_{\mathbb{R}_+^{m\times n}}(X), \quad p_r(\bm{y}) := \delta_{\mathcal{K}_r}(\bm{y}),\quad p_c(\bm{z}) := \delta_{\mathcal{K}_c}(\bm{z}).
\end{equation*}
To avoid possible confusions, we repeat here that $\bm{x}_G$ is the vector in $\mathbb{R}^{|G|}$ extracted from the matrix $X\in \mathbb{R}^{m\times n}$ via the lexicographically ordered index set $G\in \mathcal{G}$.

We first consider the function $p(\cdot)$. As a consequence of the non-overlapping structure of $\mathcal{G}$, to evaluate $\mathtt{prox}_{\sigma p}(\cdot)$, it is sufficient to discuss the computation on each $G\in\mathcal{G}$. In particular, given any $G\in \mathcal{G}$, we define the function (without loss of generality, we assume that $\lambda_1 > 0$ and $\omega_G > 0$):
\begin{equation*}
p_G(\bm{x}_G):= \lambda_1 \omega_G \norm{\bm{x}_G} + \frac{\lambda_2}{2}\norm{\bm{x}_G}^2 + \delta_{\mathbb{R}^{|G|}_+}(\bm{x}_G), \quad \forall\; \bm{x}_G\in \mathbb{R}^{|G|}.
\end{equation*}
Then, we can verify that
\begin{align*}
\mathtt{prox}_{\sigma p_G}(\bm{x}_G)
=~& \argmin{\bm{z}_G\in \mathbb{R}^{|G|}}\left\{p_G(\bm{z}_G) + \frac{1}{2\sigma}\norm{\bm{z}_G - \bm{x}_G}^2  \right\} \\
=~& \argmin{\bm{z}_G\in \mathbb{R}^{|G|}}\left\{\lambda_1 \omega_G \norm{\bm{z}_G} + \frac{\lambda_2}{2}\norm{\bm{z}_G}^2 + \frac{1}{2\sigma}\norm{\bm{z}_G - \bm{x}_G}^2 ~:~ \bm{z}_G\geq 0 \right\} \\
=~& \argmin{\bm{z}_G\in \mathbb{R}^{|G|}}\left\{\norm{\bm{z}_G} + \frac{\sigma \lambda_2 + 1}{2\sigma\lambda_1\omega_G}\norm{\bm{z}_G - \frac{1}{\sigma\lambda_2+1}\bm{x}_G}^2 ~:~ \bm{z}_G\geq 0 \right\} \\
=~& \argmin{\bm{z}_G\in \mathbb{R}^{|G|}}\left\{\norm{\bm{z}_G} + \frac{\sigma \lambda_2 + 1}{2\sigma\lambda_1\omega_G}\norm{\bm{z}_G - \frac{1}{\sigma\lambda_2+1}\Pi_{\mathbb{R}^{|G|}_+}(\bm{x}_G)}^2 \right\} \\
=~& \mathtt{prox}_{\frac{\sigma\lambda_1\omega_G}{\sigma\lambda_2+1}\norm{\cdot}}\left(\frac{1}{\sigma\lambda_2+1}\Pi_{\mathbb{R}^{|G|}_+}(\bm{x}_G)\right),
\end{align*}
where the fourth equality follows from \cite[Proposition 1]{kim2012group}. Consequently, it holds that
\begin{equation*}
\left[\mathtt{prox}_{\sigma p}(X)\right]_G = \mathtt{prox}_{\sigma p_G}(\bm{x}_G) = \mathtt{prox}_{\frac{\sigma\lambda_1\omega_G}{\sigma\lambda_2+1}\norm{\cdot}}\left(\frac{1}{\sigma\lambda_2+1}\Pi_{\mathbb{R}^{|G|}_+}(\bm{x}_G)\right),\quad \forall \; G\in \mathcal{G}.
\end{equation*}

We next discuss how to derive a suitable multifunction $\mathcal{X}$ for $\mathtt{prox}_{\sigma p}(\cdot)$. To this end, we first recall some well-known results which are useful for our later exposition. Given any scalar $\zeta > 0$ and $\bm{x}_G\in \mathbb{R}^{|G|}$, one can show by direct computation that
\begin{equation}\label{prox-Fnorm}
\mathtt{prox}_{\zeta\norm{\cdot}}(\bm{x}_G) =
\left\{
\begin{array}{ll}
\max\left\{1 - \frac{\zeta}{\norm{\bm{x}_G}}, \,0\right\} \bm{x}_G, & \textrm{if}~ \bm{x}_G \neq 0, \\
0, & \textrm{otherwise}.
\end{array}\right.
\end{equation}
Moreover, we know from, e.g., \cite[Lemma 2.1]{zzst2020efficient}, that $\mathtt{prox}_{\zeta \norm{\cdot}}(\cdot)$ is strongly semi-smooth with respect to its Clarke generalized Jacobian $\partial \mathtt{prox}_{\zeta \norm{\cdot}}(\cdot)$ which takes the following form:
\begin{equation}\label{Jabl2nrom}
\partial \mathtt{prox}_{\zeta \norm{\cdot}}(\bm{x}_G) =
\left\{
\begin{array}{ll}
\left\{\left(1 - \frac{\zeta}{\norm{\bm{x}_G}}\right)I_{|G|} + \frac{\zeta}{\norm{\bm{x}_G}^3}\bm{x}_G\bm{x}_G^\top \right\}, & \textrm{if}~ \norm{\bm{x}_G} > \zeta, \vspace{3mm}\\
\left\{\frac{\chi}{\zeta^2}\bm{x}_G\bm{x}_G^\top \mid \chi \in [0,1] \right\}, & \textrm{if}~ \norm{\bm{x}_G} = \zeta, \vspace{3mm}\\
0, & \textrm{otherwise},
\end{array}\right.
\end{equation}
for any $\bm{x}_G \in \mathbb{R}^{|G|}$. Second, it is known from, e.g., \cite[Proposition 7.4.7]{fp2003finite}, that $\Pi_{\mathbb{R}^{|G|}_+}(\cdot)$ is strongly semi-smooth with respect to its Clarke generalized Jacobian $\partial\Pi_{\mathbb{R}^{|G|}_+}(\cdot)$, which is given as follows: for any given $\bm{x}_G\in \mathbb{R}^{|G|}$ for $G\in \mathcal{G}$,
\begin{equation}\label{JabPi}
\partial \Pi_{\mathbb{R}^{|G|}_+}(\bm{x}_G) =
\left\{
\Diag(\bm{\theta}_G)\,:\,\bm{\theta}_G\in \mathbb{R}^{|G|},\,[\bm{\theta}_G]_i \in
\left\{
\begin{array}{ll}
\left\{1\right\}, & \textrm{if}~ [\bm{x}_G]_i > 0, \vspace{1mm} \\
\left[0,1\right], & \textrm{if}~ [\bm{x}_G]_i = 0, \vspace{1mm} \\
\left\{0\right\}, & \textrm{otherwise},
\end{array}\right.\, 1\leq i\leq |G|
\right\}.
\end{equation}
With the above preparations, we can give the following results showing that, for each $G\in \mathcal{G}$, one can derive a surrogate generalized Jacobian $\mathcal{X}_G(\cdot)$ of a composite map of $\mathtt{prox}_{\zeta\norm{\cdot}}(\cdot)$ and $\Pi_{\mathbb{R}_+^{|G|}}(\cdot)$ so that this composite map is strongly semi-smooth with respect to $\mathcal{X}_G$.
\begin{proposition}\label{prop:J}
For each $G\in \mathcal{G}$ and any given $\bm{x}_G\in \mathbb{R}^{|G|}$, define
a multifunction $\mathcal{X}_G: \mathbb{R}^{|G|} \rightrightarrows \mathbb{R}^{|G|\times |G|}$ as follows:
\begin{equation*}
    \mathcal{X}_G(\bm{x}_G) := \left\{\frac{1}{\sigma\lambda_2+1}\mathcal{J}_1 \mathcal{J}_2 ~:~ \mathcal{J}_1 \in \partial \mathtt{prox}_{\frac{\sigma\lambda_1\omega_G}{\sigma\lambda_2+1}\norm{\cdot}}\left(\frac{1}{\sigma\lambda_2+1}\Pi_{\mathbb{R}^{|G|}_+}(\bm{x}_G)\right),\; \mathcal{J}_2\in \partial \Pi_{\mathbb{R}^{|G|}_+}(\bm{x}_G)\right\}.
\end{equation*}
Then, the following statements hold.
\begin{itemize}
\item[(i)] $\mathcal{X}_G$ is a nonempty, compact-valued, and upper-semicontinuous multifunction.
\item[(ii)] For  any $\mathcal{J}_G\in \mathcal{X}_G(\bm{x}_G)$, $\mathcal{J}_G$ is symmetric and positive semidefinite.
\item[(iii)] For any $\mathcal{J}_G\in \mathcal{X}_G(\bm{x}_G + \Delta \bm{x}_G)$ with $\Delta \bm{x}_G \to 0$, it holds that
    \begin{equation*}
    \mathtt{prox}_{\sigma p_G}(\bm{x}_G + \Delta \bm{x}_G) - \mathtt{prox}_{\sigma p_G}(\bm{x}_G) - \mathcal{J}_G(\Delta \bm{x}_G) = O\big(\norm{\Delta \bm{x}_G}^2\big).
    \end{equation*}
\end{itemize}
\end{proposition}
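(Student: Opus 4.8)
The plan is to regard $\mathtt{prox}_{\sigma p_G}$ as a composition and to recognize $\mathcal{X}_G$ as the associated chain-rule surrogate Jacobian. Put $\zeta_G := \frac{\sigma\lambda_1\omega_G}{\sigma\lambda_2+1} > 0$ and $\bm{w}(\bm{x}_G) := \frac{1}{\sigma\lambda_2+1}\Pi_{\mathbb{R}^{|G|}_+}(\bm{x}_G)$, so that the computation preceding the proposition reads $\mathtt{prox}_{\sigma p_G}(\bm{x}_G) = \mathtt{prox}_{\zeta_G\norm{\cdot}}\big(\bm{w}(\bm{x}_G)\big)$, and $\mathcal{X}_G(\bm{x}_G) = \big\{\tfrac{1}{\sigma\lambda_2+1}\mathcal{J}_1\mathcal{J}_2 : \mathcal{J}_1\in\partial\mathtt{prox}_{\zeta_G\norm{\cdot}}(\bm{w}(\bm{x}_G)),\ \mathcal{J}_2\in\partial\Pi_{\mathbb{R}^{|G|}_+}(\bm{x}_G)\big\}$. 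With this identification in hand, (i) will follow from the standard stability of Clarke Jacobians under composition with continuous maps, (iii) is essentially the chain rule for strong semi-smoothness, and (ii) is where the real work lies.

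For (i), I would use that the Clarke generalized Jacobian of a locally Lipschitz map is everywhere nonempty, convex and compact, and upper-semicontinuous as a multifunction. Since $\bm{x}_G\mapsto\bm{w}(\bm{x}_G)$ is (globally $\tfrac{1}{\sigma\lambda_2+1}$-Lipschitz, hence) continuous, the composite multifunction $\bm{x}_G\mapsto\partial\mathtt{prox}_{\zeta_G\norm{\cdot}}(\bm{w}(\bm{x}_G))$ stays nonempty- and compact-valued and upper-semicontinuous, and likewise $\bm{x}_G\mapsto\partial\Pi_{\mathbb{R}^{|G|}_+}(\bm{x}_G)$ by \eqref{JabPi}. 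Because $(\mathcal{J}_1,\mathcal{J}_2)\mapsto\tfrac{1}{\sigma\lambda_2+1}\mathcal{J}_1\mathcal{J}_2$ is continuous (bilinear), $\mathcal{X}_G(\bm{x}_G)$ is the image of a compact product set under a continuous map, hence nonempty and compact; upper-semicontinuity then follows by a routine sequential argument, extracting from $\mathcal{X}_G(\bm{x}_G^\nu)\ni\mathcal{J}^\nu=\tfrac{1}{\sigma\lambda_2+1}\mathcal{J}_1^\nu\mathcal{J}_2^\nu\to\mathcal{J}$ (with $\bm{x}_G^\nu\to\bm{x}_G$) convergent subsequences of the locally bounded factors $\mathcal{J}_1^\nu,\mathcal{J}_2^\nu$ and passing to the limit via upper-semicontinuity of $\partial\mathtt{prox}_{\zeta_G\norm{\cdot}}$, $\partial\Pi_{\mathbb{R}^{|G|}_+}$ and continuity of $\bm{w}(\cdot)$.

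For (ii), the subtle point is that a product of two symmetric positive semidefinite matrices need not be symmetric, so one cannot argue factor by factor; instead I would exploit the compatibility between the two Jacobians. Fix $\bm{x}_G$ and set $\bm{w}:=\bm{w}(\bm{x}_G)$. The key observation is that $\mathcal{J}_2\bm{w}=\bm{w}$ for every $\mathcal{J}_2=\Diag(\bm{\theta}_G)\in\partial\Pi_{\mathbb{R}^{|G|}_+}(\bm{x}_G)$ as in \eqref{JabPi}: componentwise, if $[\bm{x}_G]_i>0$ then $[\bm{\theta}_G]_i=1$, whereas if $[\bm{x}_G]_i\le 0$ then $w_i=0$, so $[\bm{\theta}_G]_i w_i=w_i$ in every case. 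Splitting on $\norm{\bm{w}}$ versus $\zeta_G$ according to \eqref{Jabl2nrom} then gives: if $\norm{\bm{w}}>\zeta_G$, using $\Diag(\bm{\theta}_G)^\top=\Diag(\bm{\theta}_G)$ and $\mathcal{J}_2\bm{w}=\bm{w}$, $\mathcal{J}_1\mathcal{J}_2=\big(1-\tfrac{\zeta_G}{\norm{\bm{w}}}\big)\Diag(\bm{\theta}_G)+\tfrac{\zeta_G}{\norm{\bm{w}}^3}\bm{w}\bm{w}^\top$, which is symmetric and, since $1-\tfrac{\zeta_G}{\norm{\bm{w}}}>0$ and $\bm{\theta}_G\ge 0$, positive semidefinite; if $\norm{\bm{w}}=\zeta_G$, then $\mathcal{J}_1=\tfrac{\chi}{\zeta_G^2}\bm{w}\bm{w}^\top$ with $\chi\in[0,1]$, so $\mathcal{J}_1\mathcal{J}_2=\tfrac{\chi}{\zeta_G^2}\bm{w}\bm{w}^\top\succeq 0$; and if $\norm{\bm{w}}<\zeta_G$, then $\mathcal{J}_1=0$ and $\mathcal{J}_1\mathcal{J}_2=0$. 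In every case $\tfrac{1}{\sigma\lambda_2+1}\mathcal{J}_1\mathcal{J}_2$ is symmetric positive semidefinite.

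For (iii), I would carry out the strong semi-smoothness chain rule explicitly. Given $\Delta\bm{x}_G\to 0$, set $\Delta\bm{w}:=\bm{w}(\bm{x}_G+\Delta\bm{x}_G)-\bm{w}(\bm{x}_G)$; nonexpansiveness of $\Pi_{\mathbb{R}^{|G|}_+}$ gives $\norm{\Delta\bm{w}}\le\tfrac{1}{\sigma\lambda_2+1}\norm{\Delta\bm{x}_G}$, and strong semi-smoothness of $\Pi_{\mathbb{R}^{|G|}_+}$ with respect to $\partial\Pi_{\mathbb{R}^{|G|}_+}$ yields, for the factor $\mathcal{J}_2\in\partial\Pi_{\mathbb{R}^{|G|}_+}(\bm{x}_G+\Delta\bm{x}_G)$ of a prescribed $\mathcal{J}_G\in\mathcal{X}_G(\bm{x}_G+\Delta\bm{x}_G)$, the expansion $\Delta\bm{w}=\tfrac{1}{\sigma\lambda_2+1}\mathcal{J}_2\Delta\bm{x}_G+O(\norm{\Delta\bm{x}_G}^2)$. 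Then strong semi-smoothness of $\mathtt{prox}_{\zeta_G\norm{\cdot}}$ with respect to $\partial\mathtt{prox}_{\zeta_G\norm{\cdot}}$, applied at $\bm{w}(\bm{x}_G)$ with increment $\Delta\bm{w}$ and with the factor $\mathcal{J}_1\in\partial\mathtt{prox}_{\zeta_G\norm{\cdot}}(\bm{w}(\bm{x}_G+\Delta\bm{x}_G))$ of $\mathcal{J}_G$, together with $\norm{\mathcal{J}_1}\le 1$ (again nonexpansiveness), gives $\mathtt{prox}_{\sigma p_G}(\bm{x}_G+\Delta\bm{x}_G)-\mathtt{prox}_{\sigma p_G}(\bm{x}_G)=\mathcal{J}_1\Delta\bm{w}+O(\norm{\Delta\bm{w}}^2)=\tfrac{1}{\sigma\lambda_2+1}\mathcal{J}_1\mathcal{J}_2\Delta\bm{x}_G+O(\norm{\Delta\bm{x}_G}^2)=\mathcal{J}_G\Delta\bm{x}_G+O(\norm{\Delta\bm{x}_G}^2)$, as required. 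The main obstacle is part (ii): the factorwise argument fails, so the proof must hinge on the identity $\mathcal{J}_2\bm{w}(\bm{x}_G)=\bm{w}(\bm{x}_G)$ together with the explicit ``scalar multiple of the identity plus rank-one'' form of $\partial\mathtt{prox}_{\zeta_G\norm{\cdot}}$ recorded in \eqref{Jabl2nrom}; parts (i) and (iii) are routine once $\mathcal{X}_G$ has been identified as the composition surrogate Jacobian.
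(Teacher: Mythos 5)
Your proof is correct and follows essentially the same route as the paper, which simply cites the composition theorem for strongly semismooth maps (\cite[Theorem 7.5.17]{fp2003finite}) for statements (i) and (iii) and declares (ii) straightforward. Your explicit verification of (ii) --- in particular the identity $\mathcal{J}_2\,\bm{w}(\bm{x}_G)=\bm{w}(\bm{x}_G)$ combined with the explicit form \eqref{Jabl2nrom}, which is exactly what makes $\mathcal{J}_1\mathcal{J}_2$ symmetric positive semidefinite even though a product of two PSD matrices need not be symmetric in general --- correctly supplies the detail the paper omits.
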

\begin{proof}
Since statements (i) and (iii) follow from \cite[Theorem 7.5.17]{fp2003finite} and statement (ii) can be verified straightforwardly, we omit the detail here.
\end{proof}

Using Proposition \ref{prop:J}, we now can define a multifunction $\mathcal{X}$ for $\mathtt{prox}_{\sigma p}(\cdot)$ so that $\mathtt{prox}_{\sigma p}(\cdot)$  is strongly semi-smooth with respect to $\mathcal{X}$.
\begin{proposition}\label{prop:U}
For any given $X\in \mathbb{R}^{m\times n}$, define a multifunction $\mathcal{X}: \mathbb{R}^{m\times n}\rightrightarrows \mathbb{X}$ as follows:
\begin{equation*}
\mathcal{X}(X) := \left\{\mathcal{J}_{\{\mathcal{J}_G\;:\; G\in \mathcal{G}\}}\;:\; \mathcal{J}_G\in \mathcal{X}_G(\bm{x}_G),\; G\in \mathcal{G} \right\},
\end{equation*}
where $\mathcal{J}_{\{J_G\;:\; G\in \mathcal{G}\}}\in \mathcal{X}(X)$ is defined as
\begin{equation*}
\left[\mathcal{J}_{\{\mathcal{J}_G\;:\; G\in \mathcal{G}\}}(Z)\right]_G := \mathcal{J}_G(\bm{z}_G),\quad \forall\,G\in \mathcal{G},\; Z\in \mathbb{R}^{m\times n}.
\end{equation*}
Then, the following statements hold for the multifunction $\mathcal{X}$.
\begin{itemize}
\item[(i)] $\mathcal{X}$ is nonempty, compact-valued, and upper-semicontinuous multifunction.
\item[(ii)] For any $\mathcal{J}\in \mathcal{X}(X)$, $\mathcal{J}$ is self-adjoint and positive semidefinite.
\item[(iii)] For any $\mathcal{J}\in \mathcal{X}(X+\Delta X)$ with $\Delta X \to 0$,
    \begin{equation*}
    \mathtt{prox}_{\sigma p}(X+\Delta X) - \mathtt{prox}_{\sigma p}(X) - \mathcal{J}(\Delta X) = O\left(\norm{\Delta X}_F^2\right).
    \end{equation*}
\end{itemize}
\end{proposition}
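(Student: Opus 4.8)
The plan is to reduce each of (i)--(iii) to its group-wise analogue in Proposition~\ref{prop:J}, using the fact that since $\mathcal{G}$ is a \emph{finite} partition of $\{1,\dots,m\}\times\{1,\dots,n\}$, the map $X\mapsto(\bm{x}_G)_{G\in\mathcal{G}}$ identifies $\mathbb{R}^{m\times n}$ with the orthogonal direct sum $\bigoplus_{G\in\mathcal{G}}\mathbb{R}^{|G|}$, and by the separability of $p$ together with the computation preceding the proposition one has $[\mathtt{prox}_{\sigma p}(X)]_G=\mathtt{prox}_{\sigma p_G}(\bm{x}_G)$ for every $G$. Under this identification every $\mathcal{J}\in\mathcal{X}(X)$ is block-diagonal, its $G$-th block being an arbitrary element of $\mathcal{X}_G(\bm{x}_G)$, and the assembly $(\mathcal{J}_G)_{G\in\mathcal{G}}\mapsto\mathcal{J}_{\{\mathcal{J}_G\}}$ is linear and continuous.

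For (i), I would argue that nonemptiness of $\mathcal{X}(X)$ is immediate from the nonemptiness of each $\mathcal{X}_G(\bm{x}_G)$ in Proposition~\ref{prop:J}(i); compact-valuedness follows because $\mathcal{X}(X)$ is the image of the finite product $\prod_{G\in\mathcal{G}}\mathcal{X}_G(\bm{x}_G)$ of compact sets under the continuous assembly map; and upper-semicontinuity follows by combining three standard facts --- that $X\mapsto(\bm{x}_G)_{G\in\mathcal{G}}$ is continuous, that a finite Cartesian product of nonempty-compact-valued upper-semicontinuous multifunctions is again upper-semicontinuous, and that pre- and post-composing an u.s.c. multifunction with a continuous single-valued map preserves upper-semicontinuity.

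For (ii), I would observe that with respect to the orthogonal decomposition above a block-diagonal operator is self-adjoint (resp. positive semidefinite) if and only if each diagonal block is; since each block $\mathcal{J}_G\in\mathcal{X}_G(\bm{x}_G)$ is symmetric positive semidefinite by Proposition~\ref{prop:J}(ii), so is $\mathcal{J}$. For (iii), given $\mathcal{J}=\mathcal{J}_{\{\mathcal{J}_G\}}\in\mathcal{X}(X+\Delta X)$ with $\mathcal{J}_G\in\mathcal{X}_G(\bm{x}_G+\Delta\bm{x}_G)$, I would restrict the residual to the index block $G$ and use $[\mathtt{prox}_{\sigma p}(X+\Delta X)]_G=\mathtt{prox}_{\sigma p_G}(\bm{x}_G+\Delta\bm{x}_G)$ and $[\mathcal{J}(\Delta X)]_G=\mathcal{J}_G(\Delta\bm{x}_G)$ to get
\begin{equation*}
\big[\mathtt{prox}_{\sigma p}(X+\Delta X)-\mathtt{prox}_{\sigma p}(X)-\mathcal{J}(\Delta X)\big]_G
= \mathtt{prox}_{\sigma p_G}(\bm{x}_G+\Delta\bm{x}_G)-\mathtt{prox}_{\sigma p_G}(\bm{x}_G)-\mathcal{J}_G(\Delta\bm{x}_G),
\end{equation*}
which is $O(\|\Delta\bm{x}_G\|^2)$ by Proposition~\ref{prop:J}(iii). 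Squaring, summing over the finitely many $G\in\mathcal{G}$, and using $\sum_{G}\|\Delta\bm{x}_G\|^2=\|\Delta X\|_F^2$ (hence $\sum_G\|\Delta\bm{x}_G\|^4\le\|\Delta X\|_F^4$) then yields the claimed $O(\|\Delta X\|_F^2)$ bound.

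The whole argument is essentially bookkeeping over the partition; the one point I would treat carefully is the upper-semicontinuity in (i) --- rather than verifying the defining (sequential) property directly for the assembled multifunction, I would quote the product/composition stability of u.s.c. multifunctions --- and I would keep in mind throughout that the finiteness of $\mathcal{G}$ is exactly what makes the compactness and the hidden $O(\cdot)$ constants uniform.
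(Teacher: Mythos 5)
Your proposal is correct, and it supplies exactly the argument the paper leaves implicit: Proposition \ref{prop:U} is stated without proof, being regarded as an immediate consequence of Proposition \ref{prop:J} via the orthogonal block decomposition $X\mapsto(\bm{x}_G)_{G\in\mathcal{G}}$ and the identity $[\mathtt{prox}_{\sigma p}(X)]_G=\mathtt{prox}_{\sigma p_G}(\bm{x}_G)$ established just before it. Your careful handling of the finitely many groups (uniformity of the compactness and of the hidden $O(\cdot)$ constants, and the product/composition stability of upper semicontinuity) is precisely the bookkeeping needed to make that reduction rigorous.
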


For the function $p_r(\cdot)$, it is clear that
\begin{equation*}
\mathtt{prox}_{\sigma p_r}(\bm{y}) = 
\left\{
\begin{array}{ll}
0, &  \mathcal{K}_r = \{0\}^m, \vspace{1mm} \\
\Pi_{\mathbb{R}_+^m}(\bm{y}), &  \mathcal{K}_r = \mathbb{R}_+^m,
\end{array}
\right.,\quad \forall \bm{y}\in \mathbb{R}^m.
\end{equation*}
One can also verify that
\begin{equation*}
\partial \mathtt{prox}_{\sigma p_r}(\bm{y}) = \left\{
\begin{array}{ll}
\{\bm{0}\}, &  \mathcal{K}_r = \{0\}^m, \vspace{1mm} \\
\partial \Pi_{\mathbb{R}_+^m}(\bm{y}), &  \mathcal{K}_r = \mathbb{R}_+^m,
\end{array}
\right.,\quad \forall \bm{y}\in \mathbb{R}^m,
\end{equation*}
where, for any $\bm{y}\in\mathbb{R}^m$, $\partial\Pi_{\mathbb{R}_+^m}(\bm{y})$ is given by
\begin{equation*}
\partial\Pi_{\mathbb{R}_+^m}(\bm{y}) = \left\{
\Diag(\bm{\theta}) \,\Big|\,  ~\bm{\theta}_i\in\left\{\begin{array}{ll}
\{1\}, & \mathrm{if} ~\bm{y}_i>0, \\[2pt]
\mbox{$[0,1]$}, & \mathrm{if} ~\bm{y}_i=0, \\[2pt]
\{0\}, & \mathrm{otherwise},
\end{array}\right.
1\leq i\leq m \right\} \subseteq \mathbb{S}^m_{+}.
\end{equation*}
Since $\Pi_{\mathbb{R}_+^m}(\cdot)$ is strongly semi-smooth with respect to its Clarke generalized Jacobian $\partial\Pi_{\mathbb{R}_+^m}(\cdot)$, we can directly choose the multifunction $\mathcal{Y}$ as $\partial \mathtt{prox}_{\sigma p_r}$.

The case for the function $p_c(\cdot)$ can be argued similarly as above. With the above discussions and our choices of $\mathcal{X}$, $\mathcal{Y}$ and $\mathcal{Z}$, we can see that $\widehat{\partial}(\nabla\Psi)(\cdot)$ in \eqref{defpartPsi} is well-defined. Hence, the {\sc Ssn} in Algorithm \ref{algo:SSN} is also well-defined since one can show that any element $\mathcal{H}_j \in \widehat{\partial}(\nabla\Psi)( W^j, \bm{u}^j, \bm{v}^j)$, for $j\geq 0$, is self-adjoint positive definite and the line-search scheme (see \textbf{Step 2}) is also well-defined (which is ensured by our inexact conditions when solving the linear system in \textbf{Step 1}). Indeed, we have the following theorem stating the convergence properties for the {\sc Ssn} in Algorithm \ref{algo:SSN}.

\begin{theorem}\label{thm-rate-ssn}
Suppose that $\mathcal{X}$ is chosen as in Proposition \ref{prop:U}, $\mathcal{Y} = \partial\mathtt{prox}_{\sigma p_r}$, and $\mathcal{Z}=\partial \mathtt{prox}_{\sigma p_c}$. Let $\big\{\big(W^{j}, \bm{u}^{j}, \bm{v}^{j}\big)\big\}$ be the sequence generated by the {\sc Ssn} in Algorithm \ref{algo:SSN}. Then, $\big\{\big(W^{j},\bm{u}^{j}, \bm{v}^{j}\big)\big\}$ is well-defined and converges to the unique solution $(W^*, \bm{u}^*, \bm{v}^*)$ of equation \eqref{subequa}. Moreover, for sufficiently large $j$, we have
\begin{equation*}
\big\|\big( W^{j+1}-W^*,\,\bm{u}^{j+1}-\bm{u}^*,\,\bm{v}^{j+1}-\bm{v}^*\big)\big\| = O\left(\big\|(W^{j}-W^*, \,\bm{u}^{j}-\bm{u}^*, \,\bm{v}^{j}-\bm{v}^*)\big\|^{1+\gamma}\right),
\end{equation*}
where $\gamma\in(0,1]$ is the parameter pre-specified in Algorithm \ref{algo:SSN}.
\end{theorem}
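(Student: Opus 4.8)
The plan is to recognize Algorithm \ref{algo:SSN} as a globalized inexact semismooth Newton method and to verify the three ingredients that its standard convergence theory requires: (a) the generalized Jacobian $\widehat{\partial}(\nabla\Psi)$ consists of self-adjoint, uniformly positive definite, uniformly bounded operators; (b) $\nabla\Psi$ is strongly semismooth with respect to $\widehat{\partial}(\nabla\Psi)$; and (c) $\nabla\Psi$ has a unique zero with coercive potential $\Psi$. Ingredient (b) is exactly Proposition \ref{prop-ss} once $\mathcal{X}$ is taken as in Proposition \ref{prop:U} (built from Proposition \ref{prop:J}) and $\mathcal{Y}=\partial\mathtt{prox}_{\sigma p_r}$, $\mathcal{Z}=\partial\mathtt{prox}_{\sigma p_c}$. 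For (a) I would note that every $\mathcal{H}_j\in\widehat{\partial}(\nabla\Psi)(W^j,\bm{u}^j,\bm{v}^j)$ contains the term $\frac{\tau}{\sigma}\mathcal{I}$ and, since the Clarke Jacobians of the proximal mappings have operator norm at most one, is bounded above by a constant depending only on $\sigma,\tau,A,B$; hence $\mathcal{H}_j$ is self-adjoint with $\frac{\tau}{\sigma}\mathcal{I}\preceq\mathcal{H}_j\preceq \bar L\,\mathcal{I}$. This guarantees that the Newton system in \textbf{Step 1} is uniquely solvable, that the inexactness requirement can always be met, and --- because the residual is bounded by $\min(\bar\eta,\|\nabla\Psi(\cdot)\|^{1+\gamma})$ and $\mathcal{H}_j$ is uniformly positive definite --- that $(\Delta W^j,\Delta\bm{u}^j,\Delta\bm{v}^j)$ is a descent direction for $\Psi$ (for $\bar\eta$ chosen small enough, or whenever $\|\nabla\Psi\|$ is moderate). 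Consequently the Armijo line search in \textbf{Step 2} terminates after finitely many backtrackings, so Algorithm \ref{algo:SSN} is well-defined and produces an infinite sequence.

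For global convergence I would use (c): since $\Psi$ is strongly convex and $C^1$, it admits a unique minimizer $(W^*,\bm{u}^*,\bm{v}^*)$, which is the unique solution of \eqref{subequa}, and all its sublevel sets are bounded. The line search makes $\{\Psi(W^j,\bm{u}^j,\bm{v}^j)\}$ nonincreasing, so the iterates remain in a bounded sublevel set and hence have accumulation points; a standard Armijo-type argument for descent methods --- using the uniform bounds $\frac{\tau}{\sigma}\mathcal{I}\preceq\mathcal{H}_j\preceq\bar L\,\mathcal{I}$ to control the angle between $\nabla\Psi$ and the search direction --- shows that $\nabla\Psi$ vanishes at every accumulation point. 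By uniqueness the only accumulation point is $(W^*,\bm{u}^*,\bm{v}^*)$, so the whole sequence converges to it.

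For the local rate I would work on a small neighborhood of $(W^*,\bm{u}^*,\bm{v}^*)$, on which $\mathcal{H}_j$ and $\mathcal{H}_j^{-1}$ are uniformly bounded, and combine this with the strong semismoothness of $\nabla\Psi$ from Proposition \ref{prop-ss} and the order-$(1+\gamma)$ residual tolerance in \textbf{Step 1} to obtain, for all large $j$, the one-step estimate
\[
\big\|(W^j,\bm{u}^j,\bm{v}^j)+(\Delta W^j,\Delta\bm{u}^j,\Delta\bm{v}^j)-(W^*,\bm{u}^*,\bm{v}^*)\big\| = O\big(\big\|(W^j,\bm{u}^j,\bm{v}^j)-(W^*,\bm{u}^*,\bm{v}^*)\big\|^{1+\gamma}\big).
\]
The main obstacle, and the only part requiring real care, is to show that the unit step size $\alpha_j=1$ is eventually accepted by the line search in \textbf{Step 2}: this follows from the above superlinear estimate together with the Lipschitz continuity of $\nabla\Psi$ (proximal maps being nonexpansive) and the strong convexity of $\Psi$, which together imply that at the full Newton point $\Psi$ decreases by at least the fraction $\mu<1/2$ of the first-order predicted decrease --- the classical transition-to-fast-convergence argument for semismooth Newton methods. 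Once $\alpha_j\equiv 1$ for all large $j$, the iteration reduces to the inexact semismooth Newton iteration and the claimed convergence of order $1+\gamma$ follows, in the same manner as the analyses in \cite{li2020asymptotically} and the references therein.
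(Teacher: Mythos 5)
Your proposal is correct and follows essentially the same route as the paper, which omits the proof entirely and defers to the standard globalized inexact semismooth Newton analysis of \cite[Theorem 3.6]{lst2018efficiently}: you have correctly identified the three ingredients that analysis needs (uniform positive definiteness and boundedness of $\widehat{\partial}(\nabla\Psi)$ via the $\frac{\tau}{\sigma}\mathcal{I}$ term and the nonexpansiveness of the proximal maps, strong semismoothness from Proposition \ref{prop-ss} with the stated choices of $\mathcal{X},\mathcal{Y},\mathcal{Z}$, and strong convexity of $\Psi$ for uniqueness and bounded level sets), as well as the one genuinely delicate step, namely the eventual acceptance of the unit step size that triggers the order-$(1+\gamma)$ local rate.
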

\begin{proof}
The proof follows the same way as in
\cite[Theorem 3.6]{lst2018efficiently} and thus is omitted here.
\end{proof}

\blue{
From Theorem \ref{Thm:convrate}, we see that under a proper error-bound condition, our ciPALM in Algorithm \ref{algo:ciPALM} exhibits a linear convergence rate and the linear rate can be arbitrarily fast by selecting suitable hyperparameters (i.e., $\sigma_k$ and $\rho$). Moreover, from Theorem \ref{thm-rate-ssn}, the quadratically convergent semismooth Newton method enables one to solve the subproblem efficiently at each iteration. Thus, the proposed double-looped algorithmic framework is shown to be highly efficient in both outer and inner loops. This may partially explain why the proposed algorithm has promising practical performances, as shown in the next numerical section.
}

\section{Numerical experiments}\label{sec:num}

In this section, we conduct numerical experiments to evaluate the performance of our ciPALM in Algorithm \ref{algo:ciPALM} for solving some classes of unregularized and regularized OT problems that can be covered by \eqref{eq-regOTpro} or \eqref{eq:pmain}. All experiments are run in {\sc Matlab} R2023a on a PC with Intel processor i7-12700K@3.60GHz (with 12 cores and 20 threads) and 64GB of RAM, equipped with a Windows OS. The implementation details are given as follows.

\textbf{Termination conditions.} We denote $\mathtt{tol}$ as the stopping tolerance, $\mathtt{maxiter}$ as the maximum number of iterations, and $\mathtt{maxtime}$ as the maximum running time. We shall terminate our ciPALM when it returns a point $(W^{k}, \bm{u}^{k}, \bm{v}^{k}, X^{k}, \bm{y}^{k}, \bm{z}^k)$ satisfying one of the following conditions:
\begin{itemize}
\item The relative optimality residual $\eta^k := \max \big\{\eta_{X}^k, \,\eta_{\bm{y}}^k, \,\eta_{\bm{z}}^k, \,\eta_{{\rm feas}}^k, \,\eta_{{\rm gap}}^k\big\} < \mathtt{tol}$, where
	\begin{equation*}
	\begin{aligned}
	\eta_{ X}^k := &~
	\frac{\norm{ X^k - \prox_{p}\big(X^k + \bm{u}^k\bm{1}_n^\top + \bm{1}_m(\bm{v}^k)^\top + A^\top W^k B^\top - C\big)}_F}{1+\norm{C}_F }, \\
    \eta_{\bm{y}}^k := &~ \frac{\norm{\bm{y}^k - \prox_{p_r}\big(\bm{y}^k + \bm{u}^k\big)}}{1 + \norm{\bm{y}^k} + \norm{\bm{u}^k}} ,\quad
    \eta_{\bm{z}}^k := \frac{\norm{\bm{z}^k - \prox_{p_c}\big(\bm{z}^k + \bm{v}^k\big)}}{1 + \norm{\bm{z}^k} + \norm{\bm{v}^k}}, \\
    \eta_{{\rm feas}}^k := &~ \frac{\sqrt{\norm{X^k\bm{1}_n + \bm{y}^k - \bm{\alpha}}^2 + \norm{(X^k)^\top \bm{1}_m + \bm{z}^k - \bm{\beta}}^2 + \norm{AX^kB - S}_F^2 }}{1+\norm{\bm{\alpha}} + \norm{\bm{\beta}} + \norm{S}_F },\\
    \eta_{{\rm gap}}^k := &~ \frac{\abs{\mathtt{pobj}-\mathtt{dobj}} }{1+\abs{\mathtt{pobj}}+\abs{\mathtt{dobj}}},
	\end{aligned}
	\end{equation*}
    where $\mathtt{pobj} := \inner{C,X^k} + \lambda_1 \sum_{G\in\mathcal{G}} \omega_G \norm{\bm{x}_G^k} + \frac{\lambda_2}{2}\norm{X^k}^2$ and $\mathtt{dobj} := \inner{S,W^k} + \inner{\bm{\alpha},\bm{u}^k} + \inner{\bm{\beta},\bm{v}^k}-p^*\big(\bm{u}^k\bm{1}_n^\top + \bm{1}_m\bm{v}^{k\top} + A^\top W^k B^\top - C\big)$. 

\item The number of iterations $k>\mathtt{maxiter}$;

\item The total running time exceeds $\mathtt{maxtime}$.
\end{itemize}
In our experiments, we set $\mathtt{tol}=10^{-6}$, $\mathtt{maxiter} = 10^3$, and  $\mathtt{maxtime}$ to be 2 hours.

\textbf{Baseline solvers.} We next introduce our baseline solvers under two different scenarios: $\lambda_1=0$ and $\lambda_1>0$. For $\lambda_1=0$, problem \eqref{eq-regOTpro} is essentially a linear programming (LP) problem or a convex quadratic programming (QP) problem that can be solved efficiently and accurately by the well-developed commercial solver Gurobi. Moreover, the LP formed from \eqref{eq-regOTpro} can also be solved efficiently by the semismooth Newton based inexact proximal augmented Lagrangian ({\sc Snipal}) method developed in \cite{li2020asymptotically}. Thus, in this case, we shall compare our ciPALM with {\sc Snipal} and Gurobi\footnote{We use Gurobi (version 10.0.1 with an academic license) by only choosing the barrier method and disabling the cross-over strategy so that Gurobi has the best overall performance based on our experiments.}. For $\lambda_1>0$, the presence of the group regularizer in the objective function makes problem \eqref{eq-regOTpro} neither an LP or a convex QP. \blue{Consequently, {\sc Snipal} is not longer applicable. On the other hand, we observe that by adding slack variables, problem \eqref{eq-regOTpro} can be reformulated as a second-order cone programming (SOCP) problem which can be efficiently solved by commercial solvers such as Mosek; see Appendix \ref{appendix-socp} for the explicit SOCP reformulation. Moreover, the SOCP reformulation can be further {converted to} a quadratically constrained quadratic programming (QCQP) problem which can {then} be solved by Gurobi. However, our numerical experiments show that solving the QCQP reformulation via Gurobi is significantly slower than solving the SOCP reformulation directly via Mosek. Hence, we only compare our ciPALM with Mosek}\footnote{We only use the barrier method implemented in Mosek (version 10.0.46 with an academic license). Note that for LPs, Gurobi and Mosek share comparable performance when they are able to solve the tested problems successfully. However, based on our numerical experience, Mosek turns out to be less stable for solving large-scale LPs. Hence, for simplicity and ease of comparison, we only present the numerical results of Gurobi for LPs; see also Section \ref{section-MOT}.}. For both Gurobi and Mosek, we set the corresponding termination tolerances as $10^{-6}$, which matches the termination tolerance for our ciPALM. Finally, for a particular test problem, Gurobi or Mosek can often provide a reasonably accurate solution. We then use the primal solution $(X_b, \bm{y}_b, \bm{z}_b)$ obtained by Gurobi or Mosek as a benchmark to evaluate the quality of the primal solution $(X^k, \bm{y}^k, \bm{z}^k)$ obtained by our ciPALM. Specifically, we compute the normalized objective function value with respect to $(X_b, \bm{y}_b, \bm{z}_b)$, which is defined as $\texttt{nobj}:= \frac{\abs{\inner{C,\,X^k} +p(X^k) - \inner{C,\,X_{b}}-p(X_{b}) } }{1+\abs{\inner{C,\,X_{b} } + p(X_{b})}}$. Moreover, in order to measure the primal constraint violation at a given point $(X, \bm{y}, \bm{z})$, we also compute
\begin{equation*}
    \texttt{feas}:= \max\left\{\begin{array}{l}
    \frac{\sqrt{\norm{X\bm{1}_n + \bm{y} - \bm{\alpha}}^2 + \norm{X^\top \bm{1}_m + \bm{z} - \bm{\beta}}^2 + \norm{AXB - S}_F^2 }}{1+\norm{\bm{\alpha}} + \norm{\bm{\beta}} + \norm{S}_F },
    \frac{\big\|\Pi_{\mathbb{R}^{m\times n}_-}(X)\big\|_F}{1 + \norm{X}_F}, \frac{\norm{\Pi_{\mathcal{K}_r^\circ}(\bm{y})}}{1+\norm{\bm{y}}}, \frac{\norm{ \Pi_{\mathcal{K}_c^\circ}(\bm{z})}}{1+\norm{\bm{z}}}
    \end{array}
    \right\},
\end{equation*}
where $\mathcal{K}_r^\circ$ and $\mathcal{K}_c^\circ$ denote the polar cones of  $\mathcal{K}_r$ and $\mathcal{K}_c$, respectively.

\textbf{Initial points.} Our numerical experience \blue{(see, e.g., \cite{li2020asymptotically,liang2022qppal,sun2020sdpnal+})} suggests that it is beneficial to start with a reasonably good initial point so that our ciPALM, as well as the {\sc Snipal}, can converge faster. To this end, we proposed to apply a certain alternative direction method of multipliers (ADMM) type method for solving the dual problem \eqref{eq:dmain-ADMM} to perform the warmstart strategy. It is worth noting that, depending on how we update the dual variables, we can apply the classic ADMM (denoted by dADMM, see, e.g. \cite{boyd2011distributed,gabay1976dual}) method or a symmetric Gauss-Seidel based ADMM (denoted by dSGSADMM, see, e.g. \cite{chen2021equivalence,chen2017efficient}). We refer readers to Appendix \ref{appendix-admm} for detailed descriptions of the dADMM and dSGSADMM. As observed from our numerical experiments, the dSGSADMM is often more efficient than the dADMM, and hence, it is used to warm start our ciPALM and the {\sc Snipal}. Specifically, we terminate the dSGSADMM as long as it produces a point with the relative KKT residual less than $\texttt{toladmm}:=10^{-3}$ or it reaches the maximal number of iterations $\texttt{maxiteradmm}:=500$. Here, we
should mention that as first-order methods, both dADMM and dSGSADMM are usually too slow to provide a solution with
the residual $\eta^k$ less than $\texttt{tol} := 10^{-6}$. In this paper, to save space, we will not include the numerical results of applying them alone for solving problem \eqref{eq-regOTpro}. We would also like to mention that the computational time for warmstarting is included in the total computational time for fair comparisons.

\textbf{Hyperparameters.} Our ciPALM and the {\sc Snipal} also require proper choices of $\{\tau_k\}$ and $\{\sigma_k\}$ to achieve good performances. 
In our experiments, for both algorithms, we simply set $\tau_0=5$, $\tau_{k+1}=(1+(k+1)^{-1.1})\tau_k$, and $\sigma_k=\min\big\{10^4, \,\max\big\{10^{-4}, \,1.5^{k}\big\}\big\}$ for all $k\geq0$. Note that such choices of $\{\tau_k\}$ and $\{\sigma_k\}$ satisfy the required conditions in Theorems \ref{Thm:conv} and \ref{Thm:convrate}. Moreover, we would like to emphasize that more delicate updating rules for $\tau_k$ and $\sigma_k$ are possible and may lead to better numerical performances. In this paper, since we aim to investigate the influence of different inexact conditions on the subproblems, we then use the above simple updating rules and focus on different choices of $\rho$ in \eqref{inexcond-ciALM} for our ciPALM, and two summable sequences $\{\varepsilon_k\}$ and $\{\delta_k\}$ in \eqref{inexcond-SNIPAL} for the {\sc Snipal}, for the ease of comparison. In addition, for the {\sc Ssn} in Algorithm \ref{algo:SSN}, we set $\mu=10^{-4}$, $\delta=0.5$, $\bar{\eta}=10^{-3}$ and $\gamma=0.2$.

\subsection{The classical optimal transport problem}\label{section-classic-ot}

In this part of experiments, we investigate how the choices of $\rho\in [0,1)$, and $\{\varepsilon_k\}$ and $\{\delta_k\}$ would affect the performance of the ciPLAM and {\sc Snipal}, respectively. For simplicity, we consider solving the classical optimal transport problem \eqref{eq-DOT} and follow \cite[Section 4.1]{clty2023efficient} to randomly generate OT instances. Specifically, we first generate two discrete probability distributions denoted by $D_1 := \left\{ (a_i, \,\bm{p}_i)\in \mathbb{R}_+\times \mathbb{R}^3\;:\; i = 1,\dots,m \right\}$ and $D_2 := \left\{ (b_j, \,\bm{q}_j)\in \mathbb{R}_+\times \mathbb{R}^3\;:\; j = 1,\dots,n\right\}$. Here, $\bm{a}:=(a_1, \dots\!, a_{m})^{\top}$ and $\bm{b}:=(b_1, \dots\!, b_{n})^{\top}$ are probabilities/weights generated from the standard uniform distribution on the open interval $(0,1)$,  and further normalized such that $\sum^{m}_{i=1}a_i=\sum^{n}_{j=1}b_j=1$. Moreover, $\{\bm{p}_i\}$ and $\{\bm{q}_j\}$ are the support points whose entries are drawn from a Gaussian mixture distribution. With these support points, the cost matrix $C$ is generated by $C_{ij} = \|\bm{p}_i-\bm{q}_j\|^2$ for $1\leq i\leq m$ and $1\leq j\leq n$.

In our experiments, we choose $m=n\in\{1000,\,2000\}$. For the ciPALM, we solve the OT problem with $\rho\in\big\{8\times 10^{-1}, \,4\times 10^{-1}, \,1\times 10^{-1}, \,8\times 10^{-2}, \,4\times 10^{-2}, \,1\times 10^{-2}, \,8\times 10^{-3}, \,4\times 10^{-3}, \,1\times 10^{-3}, \,8\times 10^{-4}\big\}$ (there are 10 choices). For the {\sc Snipal}, we consider $\varepsilon_k = \varepsilon_0/(k+1)^p,\; \delta_k = \delta_0/(k+1)^q$ with $\varepsilon_0=\delta_0\in\big\{1, \,10^{-3}\big\}$ and $p,q\in\big\{1.1, \,2.1, \,3.1\big\}$ (hence, there are 18 combinations in total). In order to evaluate the performance, we record the computational time (\texttt{time}), the number of outer iterations (\#), and total number of linear systems solved (\texttt{lin}\#) of both algorithms.

The computational results are presented in Tables \ref{tab:ot1000} and \ref{tab:ot2000}. From the results, one can see that the performance of the
both algorithms would depend on the choices of error tolerance parameters. With proper choices of tolerance parameters, our ciPALM and the {\sc Snipal} can be comparable to each other. This is indeed reasonable because both ciPALM and {\sc Snipal} essentially use the similar PALM+{\sc Ssn} framework but with different stopping criteria for solving the subproblems. Since our ciPALM only involves a single tolerance parameter $\rho\in[0,1)$, it could be more friendly to the parameter tunings. This supports the main motivation of this work to employ a relative-type stopping criterion.

\blue{We also conduct the numerical comparisons between our ciPALM (with $\rho=0.01$) and  Gurobi for solving the classical OT problem on images in the “ClassicImages” class from the DOTmark collection \cite{schrieber2016dotmark}, which serves as a benchmark dataset for discrete OT problems. We mention that the images in the “ClassicImages” class are sourced from real-world scenarios and consist of ten different images, each with different resolutions of $32\times 32$, $64\times64$, $128\times128$ and $512 \times 512$. Thus, for each resolution, we can pair any two different images and compute the OT problem, resulting in 45 OT problems. However, due to the limited available memory (64GB in our machine), Gurobi {runs out of memory for} instances with the resolution of $128\times128$ and beyond. As a consequence, we resize the images to $96\times96$ (using the {\sc Matlab} command \texttt{imresize}) and present average results (over 45 instances) for resolutions of $32\times 32$, $64\times64$, and $96\times96$ resolutions in Table \ref{resDOTmark}, denoted by {\tt ClassicImages32}, {\tt ClassicImages64}, and {\tt ClassicImages96}, respectively. From the results, we see that the ciPALM always returns similar objective function values (compared to Gurobi) with satisfactory feasibility accuracy in significantly less CPU time.}

\begin{table}[htb!]
    \caption{Computational results of the ciPALM (\textbf{left}) and the {\sc Snipal} \cite{li2020asymptotically} (\textbf{right}) on the classical OT problem with $m = n = 1000$ under different choices of tolerance parameters.}\label{tab:ot1000}
    \begin{minipage}{0.4\textwidth}
    \centering
        \begin{tabular}{|c|c|c|c|}
        \hline
        \multicolumn{4}{|c|}{ciPALM}  \\
        \hline
        $\rho$ & \# & \texttt{lin}\# & \texttt{time} \\ \hline
            8e-1  &  23 & 132 & 6.891  \\
            4e-1  &  23 & 129 & 6.698  \\
            1e-1  &  23 & 132 & 6.605  \\
            8e-2  &  23 & 133 & 6.653  \\
            4e-2  &  23 & 136 & 6.742  \\
            1e-2  &  23 & 140 & 6.739  \\
            8e-3  &  23 & 140 & 6.738  \\
            4e-3  &  23 & 142 & 6.807  \\
            1e-3  &  23 & 143 & 6.774  \\
            8e-4  &  23 & 144 & 6.772  \\
        \hline
        \end{tabular}
    \end{minipage}
    \begin{minipage}{0.55\textwidth}
    \centering
        \begin{tabular}{|c|c|c|c|c|c|c|c|}
        \hline
        \multicolumn{8}{|c|}{{\sc Snipal}}  \\
        \hline
        $p$ & $q$ & \multicolumn{3}{c|}{$\varepsilon_0=\delta_0=1$} &
        \multicolumn{3}{|c|}{$\varepsilon_0=\delta_0=10^{-3}$} \\
        \hline
              & & \# & \texttt{lin}\# & \texttt{time} & \# & \texttt{lin}\# & \texttt{time} \\ \hline
             1.1 & 1.1 &  23 & 132 & 6.648 & 23 & 144 & 7.011 \\
             1.1 & 2.1 &  23 & 137 & 6.748 & 23 & 145 & 7.072 \\
             1.1 & 3.1 &  23 & 138 & 6.696 & 23 & 145 & 7.120 \\
             2.1 & 1.1 &  23 & 132 & 6.554 & 23 & 144 & 7.020 \\
             2.1 & 2.1 &  23 & 137 & 6.749 & 23 & 145 & 7.020 \\
             2.1 & 3.1 &  23 & 138 & 6.615 & 23 & 145 & 7.015 \\
             3.1 & 1.1 &  23 & 132 & 6.370 & 23 & 144 & 7.002 \\
             3.1 & 2.1 &  23 & 137 & 6.507 & 23 & 145 & 7.001 \\
             3.1 & 3.1 &  23 & 138 & 6.727 & 23 & 145 & 7.024 \\
             \hline
        \end{tabular}
    \end{minipage}
\end{table}

\begin{table}[htb!]
    \caption{Computational results of the ciPALM (\textbf{left}) and the {\sc Snipal} \cite{li2020asymptotically} (\textbf{right}) on the classical OT problem with $m = n = 2000$ under different choices of tolerance parameters.}\label{tab:ot2000}
    \begin{minipage}{0.4\textwidth}
    \centering
        \begin{tabular}{|c|c|c|c|}
        \hline
        \multicolumn{4}{|c|}{ciPALM}  \\
        \hline
        $\rho$ & \# & \texttt{lin}\# & \texttt{time} \\ \hline
            8e-1  &  24 & 242 & 51.378  \\
            4e-1  &  24 & 232 & 46.798  \\
            1e-1  &  24 & 231 & 45.351  \\
            8e-2  &  24 & 231 & 45.378  \\
            4e-2  &  24 & 234 & 45.021  \\
            1e-2  &  24 & 237 & 45.240  \\
            8e-3  &  24 & 237 & 45.265  \\
            4e-3  &  24 & 242 & 45.878  \\
            1e-3  &  24 & 244 & 46.031  \\
            8e-4  &  24 & 244 & 46.036  \\
        \hline
        \end{tabular}
    \end{minipage}
    \begin{minipage}{0.55\textwidth}
    \centering
        \begin{tabular}{|c|c|c|c|c|c|c|c|}
        \hline
        \multicolumn{8}{|c|}{{\sc Snipal}}  \\
        \hline
        $p$ & $q$ & \multicolumn{3}{c|}{$\varepsilon_0=\delta_0=1$} &
        \multicolumn{3}{|c|}{$\varepsilon_0=\delta_0=10^{-3}$} \\
        \hline
              & & \# & \texttt{lin}\# & \texttt{time} & \# & \texttt{lin}\# & \texttt{time} \\ \hline
             1.1 & 1.1 &  24 & 229 & 45.368 & 24 & 245 & 46.091 \\
             1.1 & 2.1 &  24 & 234 & 45.577 & 24 & 246 & 46.118 \\
             1.1 & 3.1 &  24 & 236 & 45.543 & 24 & 246 & 46.193 \\
             2.1 & 1.1 &  24 & 229 & 45.065 & 24 & 245 & 46.021 \\
             2.1 & 2.1 &  24 & 234 & 45.506 & 24 & 246 & 46.062 \\
             2.1 & 3.1 &  24 & 236 & 45.621 & 24 & 246 & 46.209 \\
             3.1 & 1.1 &  24 & 229 & 45.098 & 24 & 245 & 46.007 \\
             3.1 & 2.1 &  24 & 234 & 45.486 & 24 & 246 & 46.145 \\
             3.1 & 3.1 &  24 & 236 & 45.649 & 24 & 246 & 45.927 \\
             \hline
        \end{tabular}
    \end{minipage}
\end{table}

\begin{table}[ht]
\caption{\blue{Comparisons between  ciPALM and  Gurobi for the classical optimal transport problem on images in the ``ClassicImages" class from the DOTmark collection. In the table, ``\texttt{nobj}" denotes the normalized objective function value, ``\texttt{feas}" denotes the primal feasibility accuracy, ``\texttt{iter}" denotes the number of iterations, where the total number of linear systems solved in ciPALM is given in the bracket, and ``\texttt{time}" denotes the computational time in seconds.}}\label{resDOTmark}
\centering \tabcolsep 12pt
\scalebox{1}
{\renewcommand\arraystretch{1}
\begin{tabular}[!]{llllll}
\hline
{\tt image} & {\tt method} & \texttt{nobj} & \texttt{feas} & \texttt{iter}
& \texttt{time} \\
\hline \vspace{-4mm}\\
{\tt ClassicImages32} & Gurobi & 0        & 3.08e-13 & 14       & 3.48    \\
                      & ciPALM & 2.77e-07 & 3.75e-07 & 19 (101) & 2.82   \\[5pt]
{\tt ClassicImages64} & Gurobi & 0        & 1.97e-13 & 15       & 80.91   \\
                      & ciPALM & 3.00e-07 & 1.40e-07 & 20 (112) & 47.81   \\[5pt]
{\tt ClassicImages96} & Gurobi & 0        & 1.42e-13 & 16       & 437.96    \\
                      & ciPALM & 3.08e-07 & 3.35e-08 & 21 (133) & 229.85   \\
\hline
\end{tabular}
}
\end{table}

\subsection{The martingale optimal transport problem}\label{section-MOT}

In this section, we evaluate the performance of our ciPALM in Algorithm \ref{algo:ciPALM} for solving the martingale optimal transport problem, i.e., problem \eqref{eq-regOTpro} with $\lambda_1 =\lambda_2=0$ under the constraint set \texttt{[T3]}. 
In our experiments, we follow \cite[Example 6.3]{alfonsi2019sampling} and \cite[Section 10]{hobson2012robust} in which two distributions $\bm{\mu}=\sum_{i=1}^{m}\frac{1}{m}\delta_{\bm{p}_i}$ and $\bm{\nu}'=\sum_{j=1}^{n'} \frac{1}{n'} \delta_{\bm{q}'_j}$  are sampled from 1-dimensional lognormal distribution $\mathtt{Lognormal}(0,0.1^2) $ and $\mathtt{Lognormal}(0,0.15^2)  $, respectively. Suggested by \cite{alfonsi2019sampling}, we consider $\bm{\nu}\coloneqq \bm{\mu} \vee \bm{\nu}'=\sum_{j=1}^{n} \beta_j \delta_{\bm{q}_j} $ calculated by \cite[Algorithm 1]{alfonsi2019sampling}, which satisfies $\bm{\mu} \leq_{cv} \bm{\nu}$\footnote{We say that $\bm{\mu} \leq_{cv} \bm{\nu}$ if for any convex function $\phi$, $\mathbb{E}_{x\sim \bm{\mu}}[\phi(x)] \leq \mathbb{E}_{y\sim \bm{\nu}}[\phi(y)]$, provided that both expectations exist.
Then, $\leq_{cv}$ defines a convex order, and the supremum $\bm{\mu}\vee\bm{\nu}$ of $\bm{\mu}$ and $\bm{\nu}$ can be defined so that $\bm{\mu}\vee\bm{\nu}$ is greater that $\bm{\mu}$ in this convex order. For more theoretical details and efficient scheme of computing $\bm{\mu}\vee\bm{\nu}$, we refer readers to \cite{alfonsi2019sampling}.
} so that the feasible set \texttt{[T3]} associated with $\bm{\mu}$ and $\bm{\nu}$ is nonempty. The cost matrix is obtained by setting $C_{ij}\coloneqq \abs{\bm{p}_i-\bm{q}_j}^{2.1}$ for any $1\leq i\leq m$ and $1\leq j \leq n$. We also set $\rho=0.01$ for our ciPALM to obtain overall competitive performances based on our numerical observations.

We then generate a set of synthetic problems with $m = n\in \{1000,\,2000,\dots,10000\}$. For each $m$, we generate 10 instances with different random seeds, and present the average numerical performances of our ciPALM and Gurobi in Figure~\ref{fig-mot}. Here, we mention that the termination tolerance for Gurobi is set to $10^{-6}$, which is same as the termination tolerance for our ciPALM. It can be observed that the primal feasibility accuracy and the normalized objective function value (using Gurobi as a bechmark) of our ciPALM are always at around or lower than the level of $10^{-6}$, suggesting that our ciPALM is able to solve the testing problems to a reasonable accuracy. Moreover, for large-scale problems, Gurobi can be rather time-consuming and memory-consuming. As an example, for the case where $m=n=10000$, a large-scale LP containing $10^8$ nonnegative variables and 30000 equality constraints was solved, and in this case, one can observe that Gurobi is around 5 times slower than our ciPALM. In addition, we have observed 
that Gurobi 
cannot solve the problems with $m = n \geq 11000$ in our PC due to the out-of-memory issue, while our ciPALM can handle much larger problems up to $m=n= 17000$.

\begin{figure}[htb!]
\begin{minipage}[!]{0.48\linewidth}
\centering
\hfill
\renewcommand{\arraystretch}{1.15}
\tabcolsep 5pt
\scalebox{0.9}{
\begin{tabular}[!]{|c|cc|cc|cc|}
\hline
{\tt problem} & \multicolumn{2}{c|}{\texttt{nobj}}
& \multicolumn{2}{c|}{\texttt{feas}} & \multicolumn{2}{c|}{\texttt{iter}} \\
\hline
$m=n$ & \texttt{g} & \texttt{c} & \texttt{g} & \texttt{c} & \texttt{g} & \texttt{c} \\
\hline
1000 & 0 & 3.8e-7 & 4.1e-12 & 8.6e-8 & 14 & 21 (83)  \\
2000 & 0 & 4.2e-7 & 4.5e-10 & 6.2e-8 & 15 & 20 (69)  \\
3000 & 0 & 3.7e-7 & 1.9e-11 & 7.9e-8 & 16 & 18 (68)  \\
4000 & 0 & 5.6e-7 & 1.5e-10 & 3.0e-8 & 16 & 20 (85)  \\
5000 & 0 & 7.9e-7 & 1.8e-11 & 5.9e-8 & 15 & 20 (93)  \\
6000 & 0 & 5.4e-7 & 5.7e-11 & 4.7e-8 & 15 & 19 (94)  \\
7000 & 0 & 6.2e-7 & 3.4e-12 & 4.4e-8 & 16 & 19 (93)  \\
8000 & 0 & 5.5e-7 & 1.7e-11 & 4.7e-8 & 18 & 19 (94)  \\
9000 & 0 & 7.7e-7 & 1.0e-10 & 4.5e-8 & 16 & 19 (96)  \\
10000& 0 & 1.0e-6 & 3.3e-12 & 4.2e-8 & 17 & 20 (97)  \\
\hline
\end{tabular} }
\end{minipage}
\quad
\begin{minipage}[!]{0.52\linewidth}
\centering
\includegraphics[width=1\textwidth]{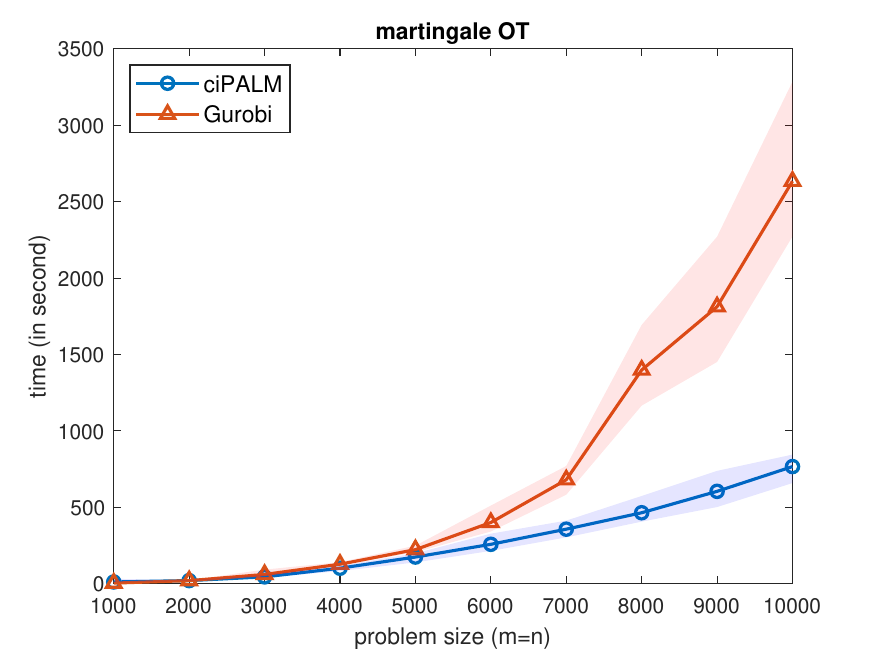}
\end{minipage}
\hfill\vspace{2mm}
\caption{Comparisons between the ciPALM (denoted by ``\texttt{c}") and the Gurobi (denoted by ``\texttt{g}") for the martingale optimal transport problem with $m = n\in \{1000,\,2000,\dots,10000\}$. \textbf{Left}: ``\texttt{nobj}" denotes the normalized objective function value, ``\texttt{feas}" denotes the primal feasibility accuracy, \blue{and ``\texttt{iter}" denotes the number of iterations, where the total number of linear systems solved in ciPALM is given in the bracket.}
\textbf{Right}: the average value and max-min range of the computational time. Note that Gurobi requires more memory than available resources in our experiments when $m=n\geq 11000$. }\label{fig-mot}
\end{figure}

\subsection{Group-quadratic regularized optimal transport problem}\label{sec:grp-quad-reg}

In this section, we evaluate the performance of our ciPALM in Algorithm \ref{algo:ciPALM} for solving the group-quadratic regularized optimal transport problem, i.e., problem \eqref{eq-regOTpro} with $\lambda_1>0$ and $\lambda_2>0$ subject to the constraint set \texttt{[T1]}. Here, we set $\rho=0.01$ for our ciPALM as Section \ref{section-MOT} to obtain overall competitive performances based on our numerical observations.



We follow \cite[Section 5.1]{courty2014domain} to generate two distributions $\bm{\mu}=\sum_{i=1}^{m} \frac{1}{m} \delta_{\bm{p}_i}$ and $\bm{\nu}=\sum_{j=1}^{n} \frac{1}{n} \delta_{\bm{q}_j}$ in $\mathbb{R}^2$ as follows. First, we choose $1\leq m_1 < m$. Then, $\bm{p}_i$ is sampled from the normal distribution $\mathtt{Normal}((-1;2), \,0.25I_2)$ if $1\leq i\leq m_1$, and is sampled from the normal distribution $\mathtt{Normal}((1;2), \,0.25I_2)$ otherwise. The associated binary label vector $\bm{\ell}^P \in \{0, 1\}^m$ is defined by $\bm{\ell}^P_i=0$ if  $1\leq i\leq m_1$, and $\bm{\ell}^P_i=1$ otherwise. In addition, $\bm{q}_j$ is sampled from the mixture Gaussian distribution defined as $\frac{1}{2}\mathtt{Normal}((-2;2), 0.5I_2) + \frac{1}{2} \mathtt{Normal}((2;3), 0.5I_2)$. Second, the group structure $\mathcal{G}$ on the variable $X\in\mathbb{R}^{m\times n}$ is defined as a partition of the indexes set $\{(1,1),(1,2),\dots,(m,n)\} $ so that $(i,j)$ and $(i',j')$ are assigned to the same group if $j=j'$ and $\bm{\ell}^{P}_i = \bm{\ell}^{P}_{i'}$. Last, the cost matrix $C\in \mathbb{R}^{m\times n}$ is obtained by setting $C_{ij}:= \norm{\bm{p}_i - \bm{q}_j}^2$ for any $1\leq i\leq m$ and $1\leq j \leq n$.

An illustration of the data set and the corresponding numerical solutions when $m=n=200,\;m_1=100$ are displayed in Figure~\ref{FigGQOTdata}, where $\{\bm{p}_i \}_{i=1}^{m_1}$, $\{\bm{p}_i \}_{i=m_1+1}^{200}$, and $\{\bm{q}_j \}_{j=1}^{n} $ are marked by red-dot, blue-dot and black-cross, respectively. In domain adaptation application, the goal is to obtain labels for the target domain (i.e.,$\{\bm{q}_j \}_{j=1}^{n}$) with the information from a labeled source (i.e., two clusters $\{\bm{p}_i \}_{i=1}^{m_1}$ and $\{\bm{p}_i \}_{i=m_1+1}^{m}$). Given a valid transport plan $X$, one may follow \cite[Section 4.3]{courty2014domain} to generate a set of labeled data points, denoted by $\left\{\tilde{\bm{p}}_i\right\}_{i = 1}^m$, on the target domain, where $\tilde{\bm{p}}_i:=\frac{\sum_{j=1}^{n} X_{i,j} \bm{q}_j }{\sum_{j=1}^{n} X_{i,j} }$ which is assigned with the same label as $\bm{p}_i$, for all $i = 1,\dots, m$. Then, one can train a machine learning model (such as a supervised learning model) by using the generated labeled dataset on the target domain to predict the labels for the dataset $\{\bm{q}_j \}_{j=1}^{n}$. Therefore, a transport plan $X$ that is able to leverage the label information of the source domain will be more appealing.

In Figure~\ref{FigGQOTdata}, we present $X$ and $\left\{\tilde{\bm{p}}_i\right\}_{i = 1}^m$ obtained from solving the classical unregularized OT problem (i.e., $\lambda_1=\lambda_2=0$), and the group-quadratic regularized problem with $\lambda_1=\lambda_2=1 $ in the middle and right sub-figures, respectively. In both figures, a red/blue arrow shows the transportation between $\bm{p}_i$ and $\tilde{\bm{p}}_i$, for all $i = 1,\dots, m$. We observe that when $\lambda_1=\lambda_2=0$, the set $\left\{\tilde{\bm{p}}_i\right\}_{i = 1}^m$ is in fact a permutation of $\left\{\bm{q}_j \right\}_{j=1}^{n}$.
However, it is clear that the solution $X$ in this case only depends on the cost matrix $C$ but does not depend on $\bm{\ell}^P$. Consequently, $\left\{\tilde{\bm{p}}_i\right\}_{i = 1}^m$ may not incorporate the label information from the source domain. Indeed, it can be seen from some red and blue dots located inside the highlighted box that the nearby points of $\bm{p}_i$ in the source domain are mapped to the nearby points of $\tilde{\bm{p}}_i$ in the target domain, regardless their labels. On the other hand, when $\lambda_1=\lambda_2=1$, one can see that these mismatching behaviors are alleviated, in the sense that points with different labels are now mapped along distinguished directions.
This phenomenon has also been observed in \cite[Figure 4]{courty2014domain} which employs a group-entropic regularizer. Note that while a group-entropic regularizer will lead to a fully dense transportation plan, a group-quadratic regularizer promotes appealing group sparsity, as indicated in Figure \ref{FigGQOTdata}.

We next generate a set of synthetic problems
with $m = n\in \{500,\,1000,\dots,\,3500\}$. For each $m$, we generate 10 instances with different random seeds, and present the average numerical performance of our ciPALM and Mosek in Figure~\ref{fig-gqot}, where the termination tolerance for the Mosek is set to $10^{-6}$ as our ciPALM. One can observe a similar behavior as in the previous subsection on the martingale OT problems. Specifically, our ciPALM always returns solutions with comparable quality as Mosek. Moreover, our ciPALM is able to solve all instances within 300 seconds, which is usually 5 to 8 times faster than Mosek. On the other hand, by using the SOCP reformulation, we observe that Mosek requires much more computational resource including the memory usage than that used by the ciPALM. In fact, Mosek is not able to solve problems with $m = m\geq 4000$ due to the out-of-memory issue while our ciPALM can handle much larger problems. This indicates the advantages of our ciPALM for solving large-scale problems that often appear in practical applications such as domain adaption \cite{courty2014domain,courty2016optimal,redko2019optimal} and activity recognition \cite{lu2021cross}.

\begin{figure}[htb!]
\centering
\includegraphics[width=0.32\textwidth]{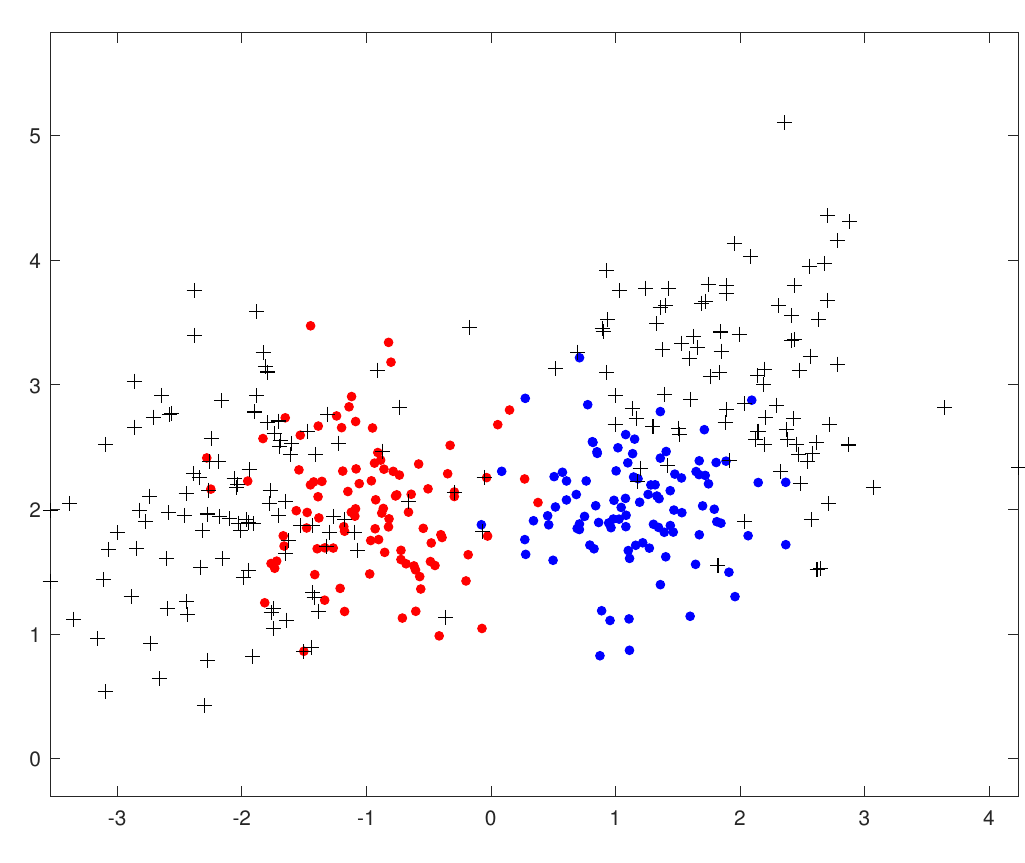}~
\includegraphics[width=0.32\textwidth]{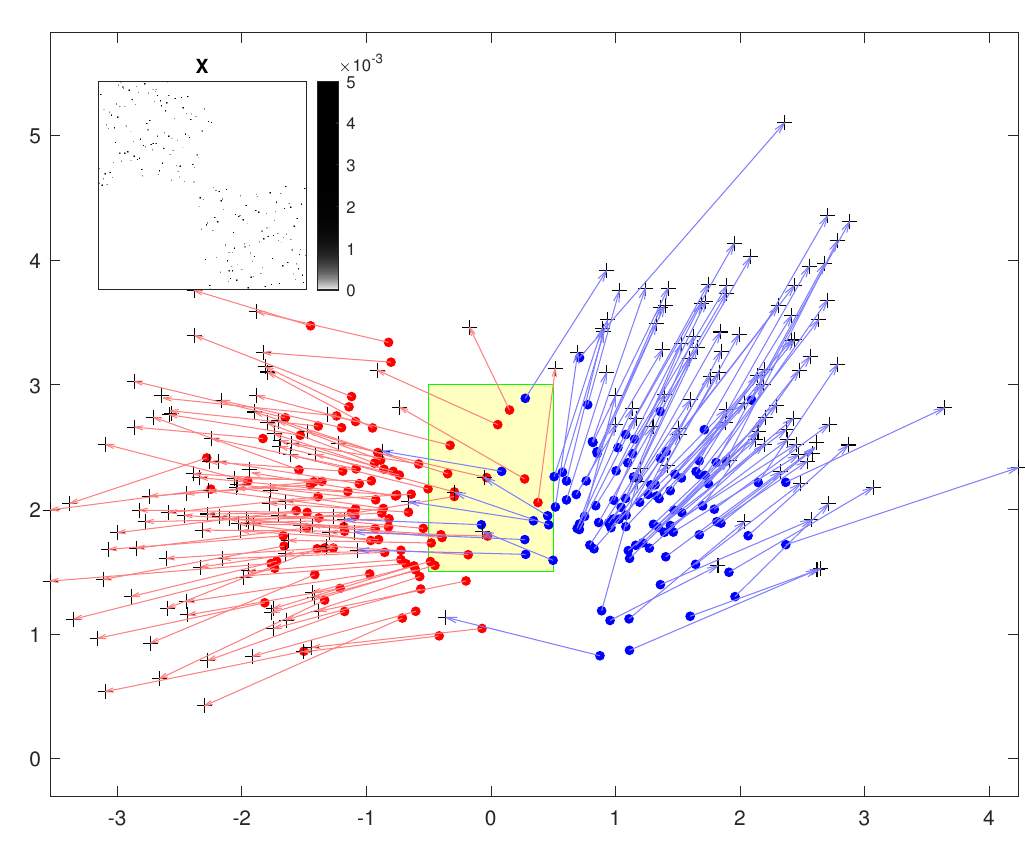}~
\includegraphics[width=0.32\textwidth]{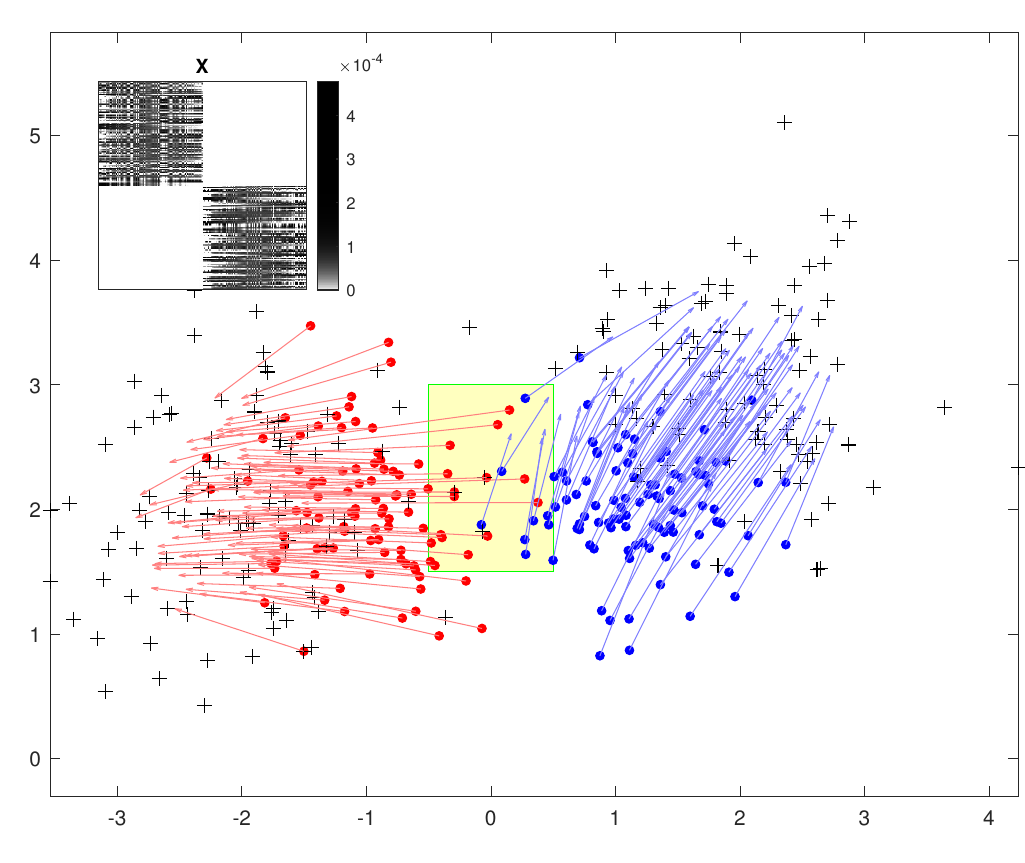}
\caption{An illustration of the data set (\textbf{left}) and numerical solutions (\textbf{middle} and \textbf{right}) when \(\lambda_1=\lambda_2=0\) and  \(\lambda_1=\lambda_2=1\). }\label{FigGQOTdata}
\end{figure}

\begin{figure}[htb!]
\begin{minipage}[!]{0.48\linewidth}
\centering
\hfill
\renewcommand{\arraystretch}{1.3}
\tabcolsep 5pt
\scalebox{0.95}{ 
\begin{tabular}[!]{|c|cc|cc|cc|}
\hline
{\tt problem} & \multicolumn{2}{c|}{\texttt{nobj}} 
& \multicolumn{2}{c|}{\texttt{feas}} & \multicolumn{2}{c|}{\texttt{iter}} \\
\hline
$m=n$ & \texttt{m} & \texttt{c} & \texttt{m} & \texttt{c} 
& \texttt{m} & \texttt{c}  \\ \hline
\multicolumn{7}{|c|}{$\lambda_1=\lambda_2=1$} \\
\hline
500  & 0 & 1.4e-5 & 3.0e-8 & 4.2e-7 & 14 & 10 (36) \\
1000 & 0 & 1.3e-5 & 1.4e-8 & 3.8e-7 & 16 & 14 (48) \\
1500 & 0 & 1.9e-5 & 1.3e-8 & 2.7e-7 & 16 & 12 (40) \\
2000 & 0 & 1.9e-5 & 1.0e-8 & 2.9e-7 & 17 & 12 (39) \\
2500 & 0 & 3.0e-5 & 1.3e-8 & 2.4e-7 & 17 & 12 (39) \\
3000 & 0 & 3.9e-5 & 1.4e-8 & 3.4e-7 & 18 & 14 (50) \\
3500 & 0 & 3.6e-5 & 1.1e-8 & 2.3e-7 & 17 & 14 (52) \\
\hline
\multicolumn{7}{|c|}{$\lambda_1=\lambda_2=0.1$} \\
\hline
500  & 0 & 4.4e-5 & 1.4e-7 & 4.4e-7 & 14 & 14 (42) \\
1000 & 0 & 7.6e-5 & 1.2e-7 & 3.5e-7 & 18 & 14 (42) \\
1500 & 0 & 6.5e-5 & 6.5e-8 & 2.6e-7 & 19 & 13 (41) \\
2000 & 0 & 1.0e-4 & 8.0e-8 & 3.1e-7 & 19 & 13 (38) \\
2500 & 0 & 1.1e-4 & 6.7e-8 & 2.9e-7 & 22 & 13 (40) \\
3000 & 0 & 1.0e-4 & 5.1e-8 & 3.1e-7 & 23 & 14 (42) \\
3500 & 0 & 1.1e-4 & 4.6e-8 & 2.0e-7 & 23 & 14 (44) \\
\hline
\end{tabular} }
\end{minipage}
\quad
\begin{minipage}[!]{0.52\linewidth}
\centering
\includegraphics[width=0.9\textwidth]{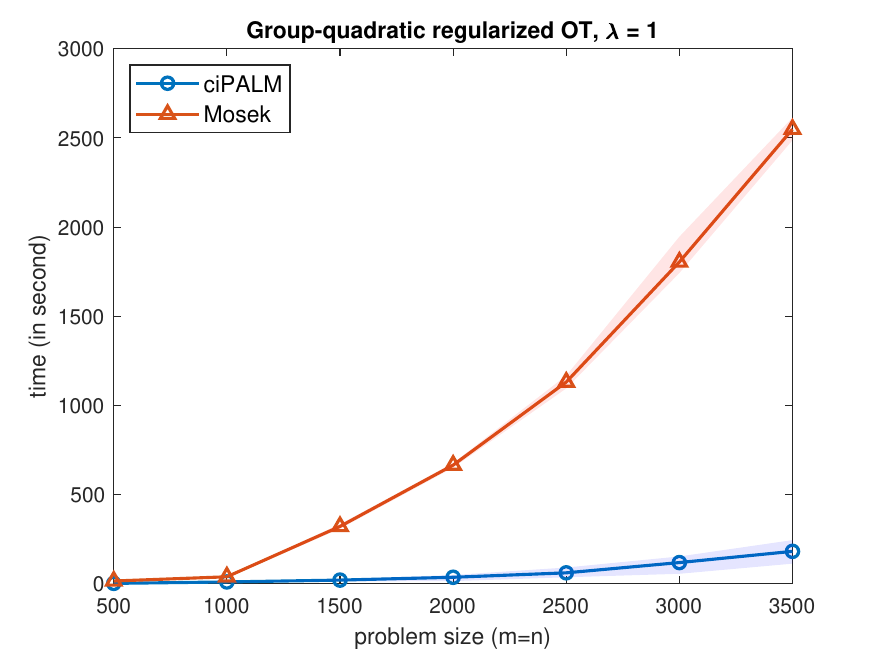}
\includegraphics[width=0.9\textwidth]{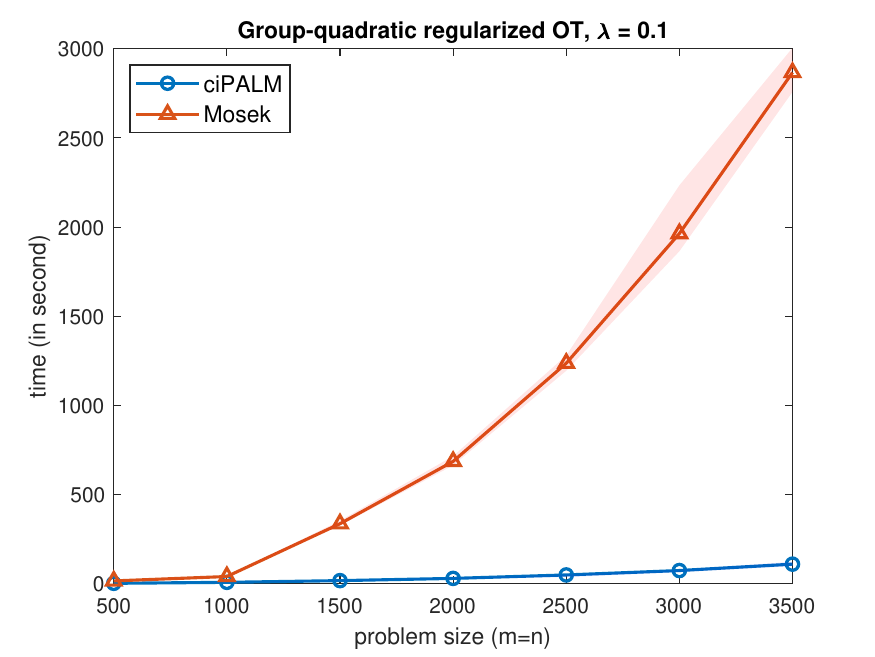}
\end{minipage}
\hfill\vspace{2mm}
\caption{Comparisons between the ciPALM (denoted by ``\texttt{c}") and the Mosek (denoted by ``\texttt{m}") for the group quadratic regularized optimal transport problem with $m = n\in \{500,\,1000,\dots,\,3500\}$. \textbf{Left}: ``\texttt{nobj}" denotes the normalized objective function value (use Mosek as a benchmark), ``\texttt{feas}" denotes the primal feasibility accuracy, \blue{and ``\texttt{iter}" denotes the number of iterations, where the total number of linear systems solved in ciPALM is given in the bracket.} \textbf{Top-right \& bottom-right}: the average value and max-min range of computational time for $\lambda=1$ and $\lambda=0.1$. Note that Mosek requires more memory than the available resources in our experiments when $m=n\geq 4000$.}\label{fig-gqot}
\end{figure}

\blue{
\begin{remark}
Based on the numerical experiments above, we have observed that interior-point-based methods, such as Gurobi and Mosek, exhibit slower performance and consume much higher memory compared to our ciPALM. This is possibly due to the difference in the efficiency in constructing and solving the involved linear systems between an interior-point-based method and our ciPALM. For instance, for the classical OT problem, suppose that the linear constraint is written as $A\mathrm{vec}(X) = \bm{b}$ where $A\in \mathbb{R}^{(m+n)\times (mn)}$ and $\bm{b}\in\mathbb{R}^{m+n}$. Then, in each iteration of an interior-point-based method, one needs to construct a coefficient matrix of the form  $A\mathrm{Diag}(\bm{d})A^{\top}$ with $\bm{d}\in\mathbb{R}_{++}^{mn}$, where all entries of $\bm{d}$ are positive. Such a coefficient matrix is typically dense, and more significantly, could be highly ill-conditioned. Thus, when $m$ and $n$ are large, the commonly employed approaches such as the Cholesky factorization and the conjugate gradient method would become inefficient or require substantial computational resources for solving the linear system. In contrast, the coefficient matrix of the linear system arising from our ciPALM is in the form of $A\mathrm{Diag}(\bm{\widehat{d}})A^{\top} + \tau I$ with $\bm{\widehat{d}}\in\mathbb{R}^{mn}_+$ and $\tau > 0$ (this can be seen from the construction of $\widehat{\partial}(\nabla\Psi)(\cdot)$ in Proposition \ref{prop-ss}). Here, $\bm{\widehat{d}}$ can have zero entries, and in fact, could be quite sparse in practical computation. Thus, by fully leveraging this sparsity structure (referred to as the ``second-order sparsity'' of the underlying problem), the cost of constructing the coefficient matrix or performing the matrix-vector multiplication can be significantly reduced. Moreover, the presence of $\tau I$ with proper choices of $\tau$ makes the coefficient matrix more well-conditioned. This further facilitates the computation of solving the linear system. More discussions on how to efficiently solve such kind of linear systems arising from the semismooth Newton method can be found in \cite[Section 4]{li2020asymptotically}. In addition, we would also like to mention that, although our ciPALM takes advantage of many efficient built-in functions (e.g., matrix multiplication and addition) in {\sc Matlab} that can be executed on multiple computational threads, we believe that there is still ample room for improving our ciPALM with a dedicated parallel implementation on a suitable computing platform other than {\sc Matlab}. But we will leave this topic as future research.

\end{remark}
}

\section{Conclusions}\label{sec:conc}

In this paper, we considered a class of group-quadratic regularized OT problems whose solutions are promoted to have special structures. To solve this class of problems, we proposed a corrected inexact proximal augmented Lagrangian method (ciPALM) whose subproblems are solved by the semi-smooth Newton method. The proposed method can be shown to admit appealing convergence properties under mild conditions. Moreover, different from the recent semismooth Newton based inexact proximal augmented Lagrangian ({\sc Snipal}) method, wherein a summable tolerance parameter sequence should be specified for practical implementations, our ciPALM employed a relative error criterion for the approximate minimization of the subproblem, wherein only a single tolerance parameter 
is needed and thus can be more friendly to tune from the computational and implementation perspectives. Numerical results illustrated the efficiency of the proposed method for solving large-scale problems.

There remain some problems that
open our future investigations. First, when $\lambda_1 > 0$, whether or not the operator $\mathcal{T}_\ell$ satisfies the error condition in Assumption \ref{assmp-errbdweak} needs more advanced tools and further studies. Second, we observed from our numerical experiments that, if the relative error condition in \eqref{inexcond-ciALM} is used for terminating the ALM subproblem but the corrected step in \eqref{extrastep-ciALM} is dropped in the proximal ALM framework, the algorithm can still converge empirically and perform very well. However, for the time being, the corrected step is still needed for the convergence analysis. This brings a gap between the theoretical analysis and the practical performance. Hence, more advanced tools are needed to close this gap and to get a better understanding of the inexact proximal ALM framework and its variants. Last but not least, the values of the regularization parameters $\lambda_1$ and $\lambda_2$ would affect the sparsity of the optimal solution for the group-quadratic regularized OT problem. To further improve the efficiency of the proposed framework, the ideas of dimension reduction and adaptive sieving studied in \cite{yuan2022dimension,yuan2023adaptive} may be employed as a future research topic. \blue{In addition, it is also interesting to extend our algorithm to some other important variants of the OT problem such as the multi-marginal OT problem; see, for example, \cite{clty2023efficient,gs1998optimal,lhcj2022on,pass2015multi}. But it would require additional effort to identify and leverage the underlying structures to achieve higher efficiency. We will leave it as another possible future research {project.}  }

\section*{Acknowledgments}

\blue{We thank the editor and referees for their valuable suggestions and comments, which have helped to improve the quality of this paper.}

\appendix

\section{Missing proofs in Section \ref{sec:VHPE}}\label{sec-appendix-vhpecon}

\begin{proof}[Proof of Theorem \ref{thm-VHPEcon}]
\textbf{Statement (i)}. For any $\bm{x}^*\in\Omega$, one can see that
\begin{equation*}
\begin{aligned}
&\quad \|\bm{x}^{k+1}-\bm{x}^*\|^2_{M_k^{-1}} - \|\bm{x}^{k}-\bm{x}^*\|^2_{M_k^{-1}} \\
&= \|\bm{x}^{k+1}-\widetilde{\bm{x}}^{k+1}+\widetilde{\bm{x}}^{k+1}-\bm{x}^*\|^2_{M_k^{-1}}
- \|\bm{x}^{k}-\widetilde{\bm{x}}^{k+1}+\widetilde{\bm{x}}^{k+1}-\bm{x}^*\|^2_{M_k^{-1}} \\
&= \|\widetilde{\bm{x}}^{k+1}-\bm{x}^{k+1}\|^2_{M_k^{-1}}
- 2\langle M_k^{-1}(\bm{x}^{k+1}-\bm{x}^{k}),\,\bm{x}^*-\widetilde{\bm{x}}^{k+1}\rangle
- \|\widetilde{\bm{x}}^{k+1}-\bm{x}^{k}\|^2_{M_k^{-1}} \\
&= \|c_kM_k\bm{d}^{k+1} + \widetilde{\bm{x}}^{k+1} - \bm{x}^k\|^2_{M_k^{-1}}
- 2 c_k\langle -\bm{d}^{k+1},\,\bm{x}^*-\widetilde{\bm{x}}^{k+1}\rangle
- \|\bm{x}^{k}-\widetilde{\bm{x}}^{k+1}\|^2_{M_k^{-1}}  \\
&\leq \|c_kM_k\bm{d}^{k+1} + \widetilde{\bm{x}}^{k+1} - \bm{x}^k\|^2_{M_k^{-1}}
+ 2 c_k\varepsilon_{k+1}
- \|\widetilde{\bm{x}}^{k+1}-\bm{x}^{k}\|^2_{M_k^{-1}}  \\
&\leq -(1-\rho^2)\|\widetilde{\bm{x}}^{k+1}-\bm{x}^{k}\|^2_{M_k^{-1}},
\end{aligned}
\end{equation*}
where the third equality follows from $\bm{x}^{k+1}=\bm{x}^k - c_kM_k\bm{d}^{k+1}$, the first inequality follows from $\langle -\bm{d}^{k+1},\,\bm{x}^*-\widetilde{\bm{x}}^{k+1}\rangle\geq-\varepsilon_{k+1}$ since  $\bm{d}^{k+1} \in \mathcal{T}^{\varepsilon_{k+1}}(\widetilde{\bm{x}}^{k+1})$ and $0\in\mathcal{T}(\bm{x}^*)$, and the last inequality follows from condition \eqref{VHPE-inexcond}. Since $\frac{1}{1+\eta_k}M_k\preceq M_{k+1}$, we know that $M_{k+1}^{-1}\preceq(1+\eta_k)M_k^{-1}$. This together with the above inequality implies that, for any $\bm{x}^*\in\Omega$,
\begin{equation}\label{recurineq}
\begin{aligned}
\|\bm{x}^{k+1}-\bm{x}^*\|^2_{M_{k+1}^{-1}}
&\leq (1+\eta_k)\|\bm{x}^{k+1}-\bm{x}^*\|^2_{M_{k}^{-1}}  \\
&\leq (1+\eta_k)\|\bm{x}^{k}-\bm{x}^*\|^2_{M_k^{-1}}
- (1+\eta_k)(1-\rho^2)\|\widetilde{\bm{x}}^{k+1}-\bm{x}^{k}\|^2_{M_k^{-1}} \\
&\leq (1+\eta_k)\|\bm{x}^{k}-\bm{x}^*\|^2_{M_k^{-1}}.
\end{aligned}
\end{equation}
Since $\{\eta_k\}$ is a nonnegative summable sequence, it then follows from the \cite[Lemma 2 in Section 2.2]{p1987introduction} that $\big\{\|\bm{x}^{k}-\bm{x}^*\|^2_{M_k^{-1}}\big\}$ is convergent, and hence there exits some $\mu\geq0$ such that
\begin{equation}\label{limzz}
\lim\limits_{k\to\infty} \|\bm{x}^{k}-\bm{x}^*\|_{M_k^{-1}} = \mu.
\end{equation}
Thus, $\{\bm{x}^k\}$ is bounded since $\lambda_{\max}(M_k)\leq\overline{\lambda}$ for all $k\geq0$.

\textbf{Statement (ii)}. Let $\Pi_{\Omega}(\bm{x})$ denote the projection of $\bm{x}$ onto $\Omega$. It is clear that $0\in\mathcal{T}(\Pi_{\Omega}(\bm{x}^k))$. Then, we get from \eqref{recurineq} (by setting $\bm{x}^*=\Pi_{\Omega}(\bm{x}^k)$) that
\[
    \begin{aligned}
    \mathrm{dist}_{M_{k+1}^{-1}}(\bm{x}^{k+1},\Omega)
    &\leq \|\bm{x}^{k+1}-\Pi_{\Omega}(\bm{x}^k)\|_{M_{k+1}^{-1}} \\
    &\leq (1+\eta_k)\|\bm{x}^{k}-\Pi_{\Omega}(\bm{x}^k)\|^2_{M_k^{-1}}  \\
    &= (1+\eta_k)\mathrm{dist}_{M_{k}^{-1}}(\bm{x}^{k},\Omega).
    \end{aligned}
\]

\textbf{Statement (iii)}. From \eqref{recurineq} and $\eta_k\geq0$, we have
\[
    0\leq(1-\rho^2)\|\widetilde{\bm{x}}^{k+1}-\bm{x}^{k}\|^2_{M_k^{-1}}
    \leq (1+\eta_k)\|\bm{x}^{k}-\bm{x}^*\|^2_{M_k^{-1}} - \|\bm{x}^{k+1}-\bm{x}^*\|^2_{M_{k+1}^{-1}}.
\]
This, together with the convergence of $\big\{\|\bm{x}^{k}-\bm{x}^*\|^2_{M_k^{-1}}\big\}$, $\eta_k\to0$, $0\leq\rho<1$, and $\lambda_{\max}(M_k)\leq\overline{\lambda}$, implies that $\lim_{k\to\infty}\|\widetilde{\bm{x}}^{k+1}-\bm{x}^{k}\|=0$. Moreover, since $c_k\geq c>0$ and $\lambda_{\min}(M_k)\geq\underline{\lambda}>0$ for all $k\geq0$, we then get from \eqref{VHPE-inexcond} that $\lim_{k\to\infty}\|c_kM_k\bm{d}^{k+1} + \widetilde{\bm{x}}^{k+1} - \bm{x}^k\|=0$ and $\lim_{k\to\infty}\varepsilon_{k+1}=0$.
Note also that
$
    \begin{aligned}
    c\underline{\lambda}\|\bm{d}^{k+1}\| \leq \|c_kM_k\bm{d}^{k+1}\| \leq \|c_kM_k\bm{d}^{k+1} + \widetilde{\bm{x}}^{k+1} - \bm{x}^k\| + \|\widetilde{\bm{x}}^{k+1} - \bm{x}^k\|.
    \end{aligned}
$
Thus, we have $\lim_{k\to\infty}\|\bm{d}^{k+1}\|=0$.

\textbf{Statement (iv)}. Since $\{\bm{x}^k\}$ is bounded, it then has at least one cluster point. Suppose that $\bm{x}^{\infty}$ is a cluster point and $\{\bm{x}^{k_i}\}$ is a convergent subsequence such that $\lim_{i\to\infty}\bm{x}^{k_i}=\bm{x}^{\infty}$. Since $\lim_{k\to\infty}\|\widetilde{\bm{x}}^{k+1}-\bm{x}^{k}\|=0$, we also have $\lim_{i\to\infty}\widetilde{\bm{x}}^{k_i+1}=\bm{x}^{\infty}$. Recall from condition \eqref{VHPE-inexcond} that $\bm{d}^{k+1} \in \mathcal{T}^{\varepsilon_{k+1}}(\widetilde{\bm{x}}^{k+1})$. Then, for any $\bm{x}\in\mathbb{R}^{\ell}$ and $\bm{u}\in\mathcal{T}(\bm{x})$, we have $\langle \bm{u}-\bm{d}^{k_i+1}, \,\bm{x}-\widetilde{\bm{x}}^{k_i+1}\rangle \geq -\varepsilon_{k_i+1}$. Hence,
\[
    \langle \bm{u}-0, \,\bm{x}-\widetilde{\bm{x}}^{k_i+1}\rangle
    \geq \langle \bm{d}^{k_i+1}, \,\bm{x}-\widetilde{\bm{x}}^{k_i+1}\rangle
    -\varepsilon_{k_i+1}.
\]
Since $\lim_{i\to\infty}\widetilde{\bm{x}}^{k_i+1}=\bm{x}^{\infty}$, $\lim_{k\to\infty}\|\bm{d}^{k+1}\|=0$, and $\lim_{k\to\infty}\varepsilon_{k+1}=0$, by passing to the limit when $i\to\infty$, we obtain that
\[
    \langle \bm{u}-0, \,\bm{x}-\bm{x}^{\infty}\rangle \geq 0, \quad \forall\,\bm{u},\bm{x} ~\mbox{satisfying}~\bm{u}\in\mathcal{T}(\bm{x}).
\]
From the maximal monotonicity of $\mathcal{T}$, we know that $0\in\mathcal{T}(\bm{x}^{\infty})$. Now, by replacing $\bm{x}^*$ in \eqref{limzz} by $\bm{x}^{\infty}$, we can readily obtain that
$
    \lim\limits_{k\to\infty} \|\bm{x}^{k}-\bm{x}^{\infty}\|_{M_k^{-1}} = 0.
$
This thus implies that $\{\bm{x}^k\}$ converges to $\bm{x}^{\infty}$ since $\lambda_{\max}(M_k)\leq\overline{\lambda}$, and completes the proof.
\end{proof}


Henceforth, for all $k\geq0$, we let $\mathcal{P}_k:=(\mathcal{I}+c_kM_k\mathcal{T})^{-1}$ and $\mathcal{Q}_k:=\mathcal{I}-\mathcal{P}_k$. Since $\mathcal{I}+c_kM_k\mathcal{T}$ is a strongly monotone operator, it follows from
\cite[Proposition 12.54]{rw1998variational} that $\mathcal{P}_k$ is single-valued. Thus, $\mathcal{P}_{k}(\bm{x}^k)$ is the unique solution of the subproblem \eqref{VHPE-subpro}. One can also show that
\begin{equation*}
0\in\mathcal{T}(\bm{x}) ~\Longleftrightarrow~
\mathcal{P}_k(\bm{x})=\bm{x} ~\Longleftrightarrow~
\mathcal{Q}_k(\bm{x})=0.
\end{equation*}
Moreover, we summarize some properties of $\mathcal{P}_k$ and $\mathcal{Q}_k$ in the following proposition, whose proofs are similar to those of
\cite[Proposition 1]{r1976monotone}.

\begin{proposition}\label{prop-PQ}
For all $k\geq0$, it holds that
\begin{itemize}
\item[{\rm (a)}] $\bm{x}=\mathcal{P}_k(\bm{x})+\mathcal{Q}_k(\bm{x})$ and $c_k^{-1}M_k^{-1}\mathcal{Q}_k(\bm{x})\in\mathcal{T}(\mathcal{P}_k(\bm{x}))$ for all $\bm{x}\in\mathbb{R}^{\ell}$;
\item[{\rm (b)}] $\langle \mathcal{P}_k(\bm{x})-\mathcal{P}_k(\bm{x}'),
    \,\mathcal{Q}_k(\bm{x})-\mathcal{Q}_k(\bm{x}')\rangle_{M_k^{-1}}\geq0$ for all $\bm{x},\,\bm{x}'\in\mathbb{R}^{\ell}$;
\item[{\rm (c)}] $\|\mathcal{P}_k(\bm{x})-\mathcal{P}_k(\bm{x}')\|_{M_k^{-1}}^2
    +\|\mathcal{Q}_k(\bm{x})-\mathcal{Q}_k(\bm{x}')\|_{M_k^{-1}}^2
    \leq\|\bm{x}-\bm{x}'\|_{M_k^{-1}}^2$ for all $\bm{x},\,\bm{x}'\in\mathbb{R}^{\ell}$.
\end{itemize}
\end{proposition}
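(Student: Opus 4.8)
The plan is to proceed exactly as in the classical analysis of resolvents of maximal monotone operators (cf. \cite[Proposition 1]{r1976monotone}), the only adjustment being that all bilinear manipulations are carried out in the $M_k^{-1}$-weighted inner product $\inner{\bm{a},\bm{b}}_{M_k^{-1}}:=\inner{M_k^{-1}\bm{a},\,\bm{b}}$, which is a genuine inner product since $M_k$ (hence $M_k^{-1}$) is symmetric positive definite. Throughout we fix $k\geq0$ and use that $\mathcal{P}_k=(\mathcal{I}+c_kM_k\mathcal{T})^{-1}$ is single-valued with full domain $\mathbb{R}^{\ell}$, as already recorded before the proposition (the operator $\mathcal{I}+c_kM_k\mathcal{T}$ being strongly monotone and surjective).

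For statement (a), the identity $\bm{x}=\mathcal{P}_k(\bm{x})+\mathcal{Q}_k(\bm{x})$ is immediate from $\mathcal{Q}_k=\mathcal{I}-\mathcal{P}_k$. For the inclusion, set $\bm{y}:=\mathcal{P}_k(\bm{x})$; by definition of the resolvent, $\bm{x}\in\bm{y}+c_kM_k\mathcal{T}(\bm{y})$, i.e.\ $\bm{x}-\bm{y}\in c_kM_k\mathcal{T}(\bm{y})$. Applying $c_k^{-1}M_k^{-1}$ and using $\bm{x}-\bm{y}=\mathcal{Q}_k(\bm{x})$ yields $c_k^{-1}M_k^{-1}\mathcal{Q}_k(\bm{x})\in\mathcal{T}(\mathcal{P}_k(\bm{x}))$. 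For statement (b), invoke (a) at both $\bm{x}$ and $\bm{x}'$ to get $c_k^{-1}M_k^{-1}\mathcal{Q}_k(\bm{x})\in\mathcal{T}(\mathcal{P}_k(\bm{x}))$ and $c_k^{-1}M_k^{-1}\mathcal{Q}_k(\bm{x}')\in\mathcal{T}(\mathcal{P}_k(\bm{x}'))$; then monotonicity of $\mathcal{T}$ gives $\inner{c_k^{-1}M_k^{-1}\big(\mathcal{Q}_k(\bm{x})-\mathcal{Q}_k(\bm{x}')\big),\,\mathcal{P}_k(\bm{x})-\mathcal{P}_k(\bm{x}')}\geq0$, and multiplying by $c_k>0$ (and using symmetry of $M_k^{-1}$) produces precisely $\inner{\mathcal{P}_k(\bm{x})-\mathcal{P}_k(\bm{x}'),\,\mathcal{Q}_k(\bm{x})-\mathcal{Q}_k(\bm{x}')}_{M_k^{-1}}\geq0$.

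For statement (c), use (a) to write $\bm{x}-\bm{x}'=\big(\mathcal{P}_k(\bm{x})-\mathcal{P}_k(\bm{x}')\big)+\big(\mathcal{Q}_k(\bm{x})-\mathcal{Q}_k(\bm{x}')\big)$, expand $\norm{\bm{x}-\bm{x}'}_{M_k^{-1}}^2$ as the sum of the two squared weighted norms plus twice the cross term, and drop the cross term by (b). I do not expect any genuine obstacle here: the argument is a short, self-contained computation. The only points requiring minor care are that the weighted pairing $\inner{\cdot,\cdot}_{M_k^{-1}}$ is well defined (positive definiteness of $M_k$) and that symmetry of $M_k^{-1}$ is used when passing between $\inner{M_k^{-1}\bm{a},\bm{b}}$ and $\inner{\bm{a},M_k^{-1}\bm{b}}$; neither is problematic given the standing assumptions on $\{M_k\}$.
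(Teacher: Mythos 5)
Your proof is correct and is exactly the classical resolvent argument of Rockafellar that the paper itself invokes (it omits the proof, stating only that it is "similar to \cite[Proposition 1]{r1976monotone}"), carried out in the $M_k^{-1}$-weighted inner product as intended. No gaps; the three steps (resolvent identity, monotonicity of $\mathcal{T}$ transferred through $M_k^{-1}$, and expansion of the weighted square with the cross term dropped) are precisely what the reference supplies.
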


We are now ready to give the proof of Theorem \ref{thm-VHPEconrate}.

\begin{proof}[Proof of Theorem \ref{thm-VHPEconrate}.]
By applying \eqref{ineqzomega} consecutively, we have that, for all $k\geq0$,
\[
    \mathrm{dist}_{M_{k}^{-1}}(\bm{x}^{k},\Omega)
    \leq {\textstyle\prod^{k-1}_{i=0}} (1+\eta_i)\,\mathrm{dist}_{M_0^{-1}}(\bm{x}^0,\Omega)
    \leq {\textstyle\prod^{\infty}_{i=0}} (1+\eta_i)\,\mathrm{dist}_{M_0^{-1}}(\bm{x}^0,\Omega).
\]
Moreover, for all $k\geq0$,
\[
    \begin{aligned}
    \mathrm{dist}_{M_{k}^{-1}}(\mathcal{P}_{k}(\bm{x}^k),\Omega)
    &\leq \|\mathcal{P}_{k}(\bm{x}^k)-\Pi_{\Omega}(\bm{x}^k)\|_{M_{k}^{-1}}
    = \|\mathcal{P}_{k}(\bm{x}^k)-\mathcal{P}_k(\Pi_{\Omega}(\bm{x}^k))\|_{M_{k}^{-1}} \\
    &\leq \|\bm{x}^k-\Pi_{\Omega}(\bm{x}^k)\|_{M_{k}^{-1}}
    = \mathrm{dist}_{M_{k}^{-1}}(\bm{x}^{k},\Omega),
    \end{aligned}
\]
where the first equality follows from $\mathcal{P}_k(\Pi_{\Omega}(\bm{x}^k))=\Pi_{\Omega}(\bm{x}^k)$ since $\Pi_{\Omega}(\bm{x}^k)\in\mathcal{T}^{-1}(0)$. Then, from the above two inequalities, $\lambda_{\max}(M_k)\leq\overline{\lambda}$, and $\prod^{\infty}_{i=0}(1+\eta_i)<\infty$ (since $\{\eta_k\}$ is a nonnegative summable sequence), it holds that for all $k\geq0$
\[
    \mathrm{dist}(\mathcal{P}_{k}(\bm{x}^k),\Omega)
    \leq \sqrt{\overline{\lambda}}\,\mathrm{dist}_{M_{k}^{-1}}(\mathcal{P}_{k}(\bm{x}^k),\Omega)
    \leq \sqrt{\overline{\lambda}}\,{\textstyle\prod^{\infty}_{i=0}} (1+\eta_i)\mathrm{dist}_{M_0^{-1}}(\bm{x}^0,\Omega)
    <\infty.
\]
Note from Proposition \ref{prop-PQ}(a) that $c_k^{-1}M_k^{-1}\mathcal{Q}_k(\bm{x}^k)\in\mathcal{T}(\mathcal{P}_k(\bm{x}^k))$. Thus, we apply Assumption \ref{assmp-errbdweak} with $r:=\sqrt{\overline{\lambda}}\prod^{\infty}_{i=0} (1+\eta_i)\mathrm{dist}_{M_0^{-1}}(\bm{x}^0,\Omega)$ and know that, there exists
a $\kappa>0$ such that
\[
    \mathrm{dist}(\mathcal{P}_{k}(\bm{x}^k),\Omega)
    \leq \kappa\,\mathrm{dist}\big(0,\mathcal{T}(\mathcal{P}_k(\bm{x}^k))\big)
    \leq \kappa\|c_k^{-1}M_k^{-1}\mathcal{Q}_k(\bm{x}^k)\|, \quad \forall\,k\geq0.
\]
This together with $\lambda_{\min}(M_k)\geq\underline{\lambda}>0$ further implies that, for all $k\geq0$,
\begin{equation}\label{ineqdQ1}
\begin{aligned}
\mathrm{dist}_{M_{k}^{-1}}(\mathcal{P}_{k}(\bm{x}^k),\Omega)
&\leq \frac{1}{\sqrt{\underline{\lambda}}}\,\mathrm{dist}(\mathcal{P}_{k}(\bm{x}^k),\Omega)
\leq \frac{\kappa}{c_k\sqrt{\underline{\lambda}}}\,\|M_k^{-1}\mathcal{Q}_k(\bm{x}^k)\|  \\
&\leq \frac{\kappa}{c_k\underline{\lambda}}\,\|M_k^{-1}\mathcal{Q}_k(\bm{x}^k)\|_{M_k}
= \frac{\kappa}{c_k\underline{\lambda}}\,\|\mathcal{Q}_k(\bm{x}^k)\|_{M_k^{-1}}.
\end{aligned}
\end{equation}
Moreover, note that $\mathcal{Q}_k(\Pi_{\Omega}(\bm{x}^k))=0$. Then,
\begin{equation}\label{ineqdQ2}
\begin{aligned}
\|\mathcal{Q}_k(\bm{x}^k)\|_{M_k^{-1}}^2
&= \|\mathcal{Q}_k(\bm{x}^k)-\mathcal{Q}_k(\Pi_{\Omega}(\bm{x}^k))\|_{M_k^{-1}}^2 \\
&\leq \|\bm{x}^k-\Pi_{\Omega}(\bm{x}^k)\|_{M_k^{-1}}^2 - \|\mathcal{P}_k(\bm{x}^k) - \mathcal{P}_k(\Pi_{\Omega}(\bm{x}^k))\|_{M_k^{-1}}^2 \\
&\leq \|\bm{x}^k-\Pi_{\Omega}(\bm{x}^k)\|_{M_k^{-1}}^2 - \|\mathcal{P}_k(\bm{x}^k) - \Pi_{\Omega}(\bm{x}^k)\|_{M_k^{-1}}^2  \\
&\leq \mathrm{dist}_{M_{k}^{-1}}^2(\bm{x}^{k},\Omega) - \mathrm{dist}_{M_{k}^{-1}}^2(\mathcal{P}_k(\bm{x}^k),\Omega),
\end{aligned}
\end{equation}
where the first inequality follows from Proposition \ref{prop-PQ}(c). Combining \eqref{ineqdQ1} and \eqref{ineqdQ2} yields
\begin{equation}\label{ineqdist}
\mathrm{dist}_{M_{k}^{-1}}(\mathcal{P}_{k}(\bm{x}^k),\Omega)
\leq \frac{\kappa}{\sqrt{\kappa^2+\underline{\lambda}^2c_k^2}}
\,\mathrm{dist}_{M_{k}^{-1}}(\bm{x}^{k},\Omega).
\end{equation}

We next show that
\begin{equation}\label{ineqzpkz}
\begin{aligned}
\|\bm{x}^{k+1}-\mathcal{P}_k(\bm{x}^k)\|_{M_{k}^{-1}}
\leq \rho(1-\rho)^{-1}\|\bm{x}^{k+1}-\bm{x}^k\|_{M_k^{-1}}.
\end{aligned}
\end{equation}
First, one can see from the definition of $\mathcal{P}_k$ that $\bm{x}^k\in c_kM_k\mathcal{T}(\mathcal{P}_k(\bm{x}^k))+\mathcal{P}_k(\bm{x}^k)$ for all $k\geq0$, that is, for all $k\geq0$, there exits a $\bm{w}^{k+1}\in\mathcal{T}(\mathcal{P}_k(\bm{x}^k))$ such that $c_kM_k\bm{w}^{k+1}+\mathcal{P}_k(\bm{x}^k)-\bm{x}^k=0$. Then, we see that
\[
    \begin{aligned}
    &\quad \|c_kM_k\bm{d}^{k+1} + \widetilde{\bm{x}}^{k+1} - \bm{x}^k\|^2_{M_k^{-1}} \\
    &= \|c_kM_k\bm{d}^{k+1} + \widetilde{\bm{x}}^{k+1} - \bm{x}^k - \big(c_kM_k\bm{w}^{k+1}+\mathcal{P}_k(\bm{x}^k)-\bm{x}^k\big)\|^2_{M_k^{-1}} \\
    &= \|c_kM_k\bm{d}^{k+1}-c_kM_k\bm{w}^{k+1} + \widetilde{\bm{x}}^{k+1}-\mathcal{P}_k(\bm{x}^k)\|^2_{M_k^{-1}} \\
    &= \|c_kM_k\bm{d}^{k+1}-c_kM_k\bm{w}^{k+1}\|^2_{M_k^{-1}}
    + \|\widetilde{\bm{x}}^{k+1}-\mathcal{P}_k(\bm{x}^k)\|^2_{M_k^{-1}}
    + 2c_k\langle\bm{d}^{k+1}-\bm{w}^{k+1},\,\widetilde{\bm{x}}^{k+1}
    -\mathcal{P}_k(\bm{x}^k)\rangle.
    \end{aligned}
\]
Recall that $\bm{d}^{k+1} \in \mathcal{T}^{\varepsilon_{k+1}}(\widetilde{\bm{x}}^{k+1})$ and hence $\langle\bm{d}^{k+1}-\bm{w}^{k+1},\,\widetilde{\bm{x}}^{k+1}
-\mathcal{P}_k(\bm{x}^k)\rangle\geq-\varepsilon_{k+1}$. Substituting it in the above relation yields
\begin{equation}\label{ineqvw1}
\begin{aligned}
&\quad \|c_kM_k\bm{d}^{k+1}-c_kM_k\bm{w}^{k+1}\|^2_{M_k^{-1}}
+ \|\widetilde{\bm{x}}^{k+1}-\mathcal{P}_k(\bm{x}^k)\|^2_{M_k^{-1}} \\
&\leq \|c_kM_k\bm{d}^{k+1} + \widetilde{\bm{x}}^{k+1} - \bm{x}^k\|^2_{M_k^{-1}}
+ 2c_k\varepsilon_{k+1}
\leq \rho^2\|\widetilde{\bm{x}}^{k+1} - \bm{x}^k\|^2_{M_k^{-1}},
\end{aligned}
\end{equation}
where the last inequality follows from \eqref{VHPE-inexcond}. Moreover, using \eqref{VHPE-inexcond} again, we see that
\[
    \rho\|\widetilde{\bm{x}}^{k+1} - \bm{x}^k\|_{M_k^{-1}}
    \geq \|c_kM_k\bm{d}^{k+1} + \widetilde{\bm{x}}^{k+1} - \bm{x}^k\|_{M_k^{-1}}
    \geq \|\widetilde{\bm{x}}^{k+1} - \bm{x}^k\|_{M_k^{-1}} - \|c_kM_k\bm{d}^{k+1}\|_{M_k^{-1}},
\]
which implies that
\begin{equation}\label{ineqvw2}
\|\widetilde{\bm{x}}^{k+1} - \bm{x}^k\|_{M_k^{-1}}
\leq (1-\rho)^{-1}\|c_kM_k\bm{d}^{k+1}\|_{M_k^{-1}}
= (1-\rho)^{-1}\|\bm{x}^{k+1}-\bm{x}^k\|_{M_k^{-1}}.
\end{equation}
Thus, combining \eqref{ineqvw1} and \eqref{ineqvw2}, one can deduce that
\[
    \|c_kM_k\bm{d}^{k+1}-c_kM_k\bm{w}^{k+1}\|_{M_k^{-1}}
    \leq \rho(1-\rho)^{-1}\|\bm{x}^{k+1}-\bm{x}^k\|_{M_k^{-1}}.
\]
Using this inequality, we further obtain that, for all $k\geq0$,
\[
    \begin{aligned}
    &\|\bm{x}^{k+1}-\mathcal{P}_k(\bm{x}^k)\|_{M_{k}^{-1}}
    = \|\bm{x}^{k}-c_kM_k\bm{d}^{k+1}-\mathcal{P}_k(\bm{x}^k)\|_{M_{k}^{-1}} \\
    &= \|c_kM_k\bm{d}^{k+1}-c_kM_k\bm{w}^{k+1}\|_{M_{k}^{-1}}
    \leq \rho(1-\rho)^{-1}\|\bm{x}^{k+1}-\bm{x}^k\|_{M_k^{-1}},
    \end{aligned}
\]
which proves \eqref{ineqzpkz}.

Now, we see that
\[
    \begin{aligned}
    & \quad  \|\bm{x}^{k+1}-\Pi_{\Omega}(\mathcal{P}_k(\bm{x}^k))\|_{M_k^{-1}}
    ~~\leq ~~\|\bm{x}^{k+1}-\mathcal{P}_k(\bm{x}^k)\|_{M_k^{-1}} + \|\mathcal{P}_k(\bm{x}^k)-\Pi_{\Omega}(\mathcal{P}_k(\bm{x}^k))\|_{M_k^{-1}} \\
    &\stackrel{\eqref{ineqzpkz}}{\leq} \rho(1-\rho)^{-1}\|\bm{x}^{k+1}-\bm{x}^k\|_{M_k^{-1}} + \|\mathcal{P}_k(\bm{x}^k)-\Pi_{\Omega}(\mathcal{P}_k(\bm{x}^k))\|_{M_k^{-1}} \\
    &~~\leq ~~ \rho(1-\rho)^{-1}\|\bm{x}^{k+1}-\Pi_{\Omega}(\mathcal{P}_k(\bm{x}^k))\|_{M_k^{-1}}
    + \rho(1-\rho)^{-1}\|\bm{x}^k-\Pi_{\Omega}(\mathcal{P}_k(\bm{x}^k))\|_{M_k^{-1}} \\
    &\qquad\quad  + \|\mathcal{P}_k(\bm{x}^k)-\Pi_{\Omega}(\mathcal{P}_k(\bm{x}^k))\|_{M_k^{-1}}.
    \end{aligned}
\]
Thus, by rearranging terms in the above relation, we have that
\[
    \begin{aligned}
    &\quad (1-\rho(1-\rho)^{-1})\|\bm{x}^{k+1}-\Pi_{\Omega}(\mathcal{P}_k(\bm{x}^k))\|_{M_k^{-1}} \\
    &\leq \rho(1-\rho)^{-1}\|\bm{x}^k-\Pi_{\Omega}(\mathcal{P}_k(\bm{x}^k))\|_{M_k^{-1}}
    + \|\mathcal{P}_k(\bm{x}^k)-\Pi_{\Omega}(\mathcal{P}_k(\bm{x}^k))\|_{M_k^{-1}} \\
    &\leq \rho(1-\rho)^{-1}\|\bm{x}^k-\mathcal{P}_k(\bm{x}^k)\|_{M_k^{-1}}
    + \rho(1-\rho)^{-1}\|\mathcal{P}_k(\bm{x}^k)-\Pi_{\Omega}(\mathcal{P}_k(\bm{x}^k))\|_{M_k^{-1}} \\
    &\qquad + \|\mathcal{P}_k(\bm{x}^k)-\Pi_{\Omega}(\mathcal{P}_k(\bm{x}^k))\|_{M_k^{-1}} \\
    &= \rho(1-\rho)^{-1}\|\mathcal{Q}_k(\bm{x}^k)\|_{M_k^{-1}}
    + (1+\rho(1-\rho)^{-1})\|\mathcal{P}_k(\bm{x}^k)
    -\Pi_{\Omega}(\mathcal{P}_k(\bm{x}^k))\|_{M_k^{-1}}  \\
    &\leq \rho(1-\rho)^{-1}\mathrm{dist}_{M_{k}^{-1}}(\bm{x}^{k},\Omega)
    + (1+\rho(1-\rho)^{-1})\mathrm{dist}_{M_{k}^{-1}}(\mathcal{P}_{k}(\bm{x}^k),\Omega) \\
    &\leq \left(\rho(1-\rho)^{-1}
    +\frac{(1+\rho(1-\rho)^{-1})\kappa}{\sqrt{\kappa^2+\underline{\lambda}^2c_k^2}}\right)
    \mathrm{dist}_{M_{k}^{-1}}(\bm{x}^{k},\Omega),
    \end{aligned}
\]
where the second last inequality follows from \eqref{ineqdQ2} and the last inequality follows from \eqref{ineqdist}. Now, using this inequality, it holds that, for all $k\geq0$,
\[
    \begin{aligned}
    &\mathrm{dist}_{M_{k+1}^{-1}}(\bm{x}^{k+1},\Omega)
    \leq (1+\eta_k)\mathrm{dist}_{M_{k}^{-1}}(\bm{x}^{k+1},\Omega)
    \leq (1+\eta_k)\|\bm{x}^{k+1}-\Pi_{\Omega}(\mathcal{P}_k(\bm{x}^k))\|_{M_k^{-1}} \\
    &\leq \frac{1+\eta_k}{1-\rho(1-\rho)^{-1}}\left(\rho(1-\rho)^{-1}
    +\frac{(1+\rho(1-\rho)^{-1})\kappa}{\sqrt{\kappa^2+\underline{\lambda}^2c_k^2}}\right)
    \mathrm{dist}_{M_{k}^{-1}}(\bm{x}^{k},\Omega).
    \end{aligned}
\]
It is easy to see that, by taking $\rho$ sufficiently small and $c_k$ sufficiently large, we can make the scalar on the right-hand side of the above relation arbitrarily small and hence less than one. Then, we obtain the desired results and complete the proof.
\end{proof}

\section{Dual-based ADMM-type methods}\label{appendix-admm}

In this section, we present how to apply the popular alternating direction method of multipliers (ADMM, see, e.g. \cite{boyd2011distributed,gabay1976dual}) to the following dual problem of problem \eqref{eq:pmain}:
\begin{equation}\label{eq:dmain-ADMM}
\begin{aligned}
&\min_{W\in \mathbb{R}^{\widetilde m\times\widetilde n}, \Xi\in\mathbb{R}^{m\times n}, \bm{u},\,\bm{\zeta}\in\mathbb{R}^m, \bm{v},\,\bm{\xi}\in\mathbb{R}^n} -\langle S, \,W\rangle - \inner{\bm{\alpha},\bm{u}} - \inner{\bm{\beta},\bm{v}}
+ p^*(-\Xi) + p_r^*(-\bm{\zeta}) + p_c^*(-\bm{\xi})  \\
&\hspace{2.3cm}\mathrm{s.t.} \hspace{2.5cm} \bm{u}\bm{1}_n^\top + \bm{1}_m\bm{v}^\top + A^\top W B^\top + \Xi = C, \quad
 \bm{u} + \bm{\zeta} = 0, \quad
 \bm{v} + \bm{\xi} = 0.
\end{aligned}
\end{equation}
Specifically, given a positive scalar $\sigma>0$, the augmented Lagrangian function associated with \eqref{eq:dmain-ADMM} is given by
\begin{equation*}
\begin{aligned}
&\mathcal{L}_{\sigma}\big(W,\bm{u},\bm{v},\Xi,\bm{\zeta},\bm{\xi},X,\bm{y},\bm{z}\big)
:= -\langle S, \,W\rangle - \inner{\bm{\alpha},\bm{u}}
- \inner{\bm{\beta},\bm{v}} + p^*(-\Xi) + p_r^*(-\bm{\zeta}) + p_c^*(-\bm{\xi}) \\
&\qquad + \big\langle X, \,\bm{u}\bm{1}_n^\top + \bm{1}_m\bm{v}^\top + A^\top W B^\top + \Xi - C\big\rangle
+ \inner{\bm{y}, \bm{u} + \bm{\zeta}}
+ \inner{\bm{z}, \bm{v} + \bm{\xi}}  \\
&\qquad + \frac{\sigma}{2}\big\|\bm{u}\bm{1}_n^\top + \bm{1}_m\bm{v}^\top + A^\top W B^\top + \Xi - C\big\|_F^2
+ \frac{\sigma}{2}\norm{\bm{u} + \bm{\zeta}}^2
+ \frac{\sigma}{2}\norm{\bm{v} + \bm{\xi}}^2.
\end{aligned}
\end{equation*}
Then, the ADMM for solving the dual problem \eqref{eq:dmain-ADMM} can be described in Algorithm~\ref{algo:dADMM}.

\begin{algorithm}[htb!]
\caption{ADMM for solving the dual problem \eqref{eq:dmain-ADMM} (dADMM)}\label{algo:dADMM}
\textbf{Input:} the penalty parameter $\sigma>0$, and the initializations $W^0\in \mathbb{R}^{\widetilde m\times\widetilde n}$, $\Xi^0\in\mathbb{R}^{m\times n}$, $\bm{u}^0,\,\bm{\zeta}^0,\,\bm{y}^0\in\mathbb{R}^m$, $\bm{v}^0,\,\bm{\xi}^0,\,\bm{z}^0\in\mathbb{R}^n$. Set $k = 0$.
	
\While{a termination criterion is not met,}{ 		
\vspace{0.5mm}
\begin{itemize}[leftmargin=.2cm]
\item[] \textbf{Step 1.} Compute
\begin{equation*}
\big(W^{k+1}, \bm{u}^{k+1}, \bm{v}^{k+1}\big) = \arg\min\limits_{W, \bm{u}, \bm{v}}~\mathcal{L}_{\sigma}\big(W,\bm{u},\bm{v},\Xi^k,\bm{\zeta}^k,\bm{\xi}^k,X^k,\bm{y}^k,\bm{z}^k\big).
\end{equation*}

\item[] \textbf{Step 2.} Compute
\begin{equation*}
\big(\Xi^{k+1},\bm{\zeta}^{k+1},\bm{\xi}^{k+1}\big) = \arg\min\limits_{\Xi,\bm{\zeta},\bm{\xi}}~\mathcal{L}_{\sigma}
\big(W^{k+1},\bm{u}^{k+1},\bm{v}^{k+1},\Xi,\bm{\zeta},\bm{\xi},X^k,\bm{y}^k,\bm{z}^k\big).
\end{equation*}

\item[] \textbf{Step 3.} Set
\begin{equation*}
\begin{aligned}
X^{k+1} &= X^k + \gamma\sigma\big(\bm{u}^{k+1}\bm{1}_n^\top + \bm{1}_m(\bm{v}^{k+1})^\top + A^\top W^{k+1} B^\top + \Xi^{k+1} - C\big), \\
\bm{y}^{k+1} &= \bm{y}^{k}+\gamma\sigma\big(\bm{u}^{k+1}+\bm{\zeta}^{k+1}\big), \quad
\bm{z}^{k+1} = \bm{z}^{k} + \gamma\sigma\big(\bm{v}^{k+1} + \bm{\xi}^{k+1}\big),
\end{aligned}
\end{equation*}	
where $\gamma\in\left(0, \frac{1+\sqrt{5}}{2}\right)$ is the dual step-size that is typically set to $1.618$. \vspace{-3mm}
\end{itemize}
}
\textbf{Output:} $\big(W^k,\bm{u}^k,\bm{v}^k,\Xi^k,\bm{\zeta}^k,\bm{\xi}^k,X^k,\bm{y}^k,\bm{z}^k\big)$.
\end{algorithm}

Note that, in \textbf{Step 1} of Algorithm \ref{algo:dADMM}, a linear system of size $(\tilde{m}\tilde{m} + m + n)\times (\tilde{m}\tilde{m} + m + n)$ has to be solved in order to update the dual variables $(W, \bm{u}, \bm{v})$. Thus, when the problem size is large, the computation of this step would be very expensive. To bypass such an issue, we also consider applying a symmetric Gauss-Seidel based ADMM (SGSADMM, see, e.g. \cite{chen2021equivalence,chen2017efficient}),
which is described in Algorithm \ref{algo:dSGSADMM}. Moreover, as discussed in \cite{chen2021equivalence}, a larger step size $\gamma$ is also allowed in SGSADMM, which often leads to better numerical performance.

\begin{algorithm}[htb!]
\caption{SGSADMM for solving the dual problem \eqref{eq:dmain-ADMM} (dSGSADMM)}\label{algo:dSGSADMM}
\textbf{Input:} the penalty parameter $\sigma>0$, and the initializations $W^0\in \mathbb{R}^{\widetilde m\times\widetilde n}$, $\Xi^0\in\mathbb{R}^{m\times n}$, $\bm{u}^0,\,\bm{\zeta}^0,\,\bm{y}^0\in\mathbb{R}^m$, $\bm{v}^0,\,\bm{\xi}^0,\,\bm{z}^0\in\mathbb{R}^n$. Set $k = 0$.
	
\While{a termination criterion is not met,}{ 		
\vspace{0.5mm}
\begin{itemize}[leftmargin=.2cm]
\item[] \textbf{Step 1.} Compute
\[
\begin{aligned}
    \widetilde{W}^{k+1} = &~\arg\min\limits_{W}~\mathcal{L}_{\sigma}\big(W,\bm{u}^k,\bm{v}^k,\Xi^k,\bm{\zeta}^k,\bm{\xi}^k,X^k,\bm{y}^k,\bm{z}^k\big), \\
    \tilde{\bm{u}}^{k+1} = &~\arg\min\limits_{\bm{u}}~\mathcal{L}_{\sigma}\big(\widetilde{W}^{k+1},\bm{u},\bm{v}^k,\Xi^k,\bm{\zeta}^k,\bm{\xi}^k,X^k,\bm{y}^k,\bm{z}^k\big), \\
    \tilde{\bm{v}}^{k+1} = &~\arg\min\limits_{\bm{v}}~\mathcal{L}_{\sigma}\big(\widetilde{W}^{k+1}, \tilde{\bm{u}}^{k+1},\bm{v},\Xi^k,\bm{\zeta}^k,\bm{\xi}^k,X^k,\bm{y}^k,\bm{z}^k\big).
\end{aligned}
\]

\item[] \textbf{Step 2.} Compute
\begin{equation*}
\big(\Xi^{k+1},\bm{\zeta}^{k+1},\bm{\xi}^{k+1}\big) = \arg\min\limits_{\Xi,\bm{\zeta},\bm{\xi}}~\mathcal{L}_{\sigma}
\big(\widetilde{W}^{k+1},\tilde{\bm{u}}^{k+1},\tilde{\bm{v}}^{k+1},\Xi,\bm{\zeta},\bm{\xi},X^k,\bm{y}^k,\bm{z}^k\big).
\end{equation*}

\item[] \textbf{Step 3.}
\[
\begin{aligned}
    \bm{v}^{k+1} = &~\arg\min\limits_{\bm{v}}~\mathcal{L}_{\sigma}\big(\widetilde{W}^{k+1}, \tilde{\bm{u}}^{k+1},\bm{v},\Xi^{k+1},\bm{\zeta}^{k+1},\bm{\xi}^{k+1},X^k,\bm{y}^k,\bm{z}^k\big), \\
    \bm{u}^{k+1} = &~\arg\min\limits_{\bm{u}}~\mathcal{L}_{\sigma}\big(\widetilde{W}^{k+1},\bm{u},\bm{v}^{k+1},\Xi^{k+1},\bm{\zeta}^{k+1},\bm{\xi}^{k+1},X^k,\bm{y}^k,\bm{z}^k\big), \\
    W^{k+1} = &~~\arg\min\limits_{W}~\mathcal{L}_{\sigma}\big(W,\bm{u}^{k+1},\bm{v}^{k+1},\Xi^{k+1},\bm{\zeta}^{k+1},\bm{\xi}^{k+1},X^k,\bm{y}^k,\bm{z}^k\big).
\end{aligned}
\]

\item[] \textbf{Step 4.} Set
\begin{equation*}
\begin{aligned}
X^{k+1} &= X^k + \gamma\sigma\big(\bm{u}^{k+1}\bm{1}_n^\top + \bm{1}_m(\bm{v}^{k+1})^\top + A^\top W^{k+1} B^\top + \Xi^{k+1} - C\big), \\
\bm{y}^{k+1} &= \bm{y}^{k}+\gamma\sigma\big(\bm{u}^{k+1}+\bm{\zeta}^{k+1}\big), \quad
\bm{z}^{k+1} = \bm{z}^{k} + \gamma\sigma\big(\bm{v}^{k+1} + \bm{\xi}^{k+1}\big),
\end{aligned}
\end{equation*}	
where $\gamma\in(0, 2)$ is the dual step-size that is typically set to 1.95. \vspace{-3mm}
\end{itemize}
}
\textbf{Output:} $\big(W^k,\bm{u}^k,\bm{v}^k,\Xi^k,\bm{\zeta}^k,\bm{\xi}^k,X^k,\bm{y}^k,\bm{z}^k\big)$.
\end{algorithm}

\section{Second-order cone programming reformulation}\label{appendix-socp}

In this section, we present an explicit second-order cone programming (SOCP) reformulation of problem \eqref{eq-regOTpro}. To this end, we first characterize the constraint set $\mathcal{T}$ as
\begin{equation*}
\begin{aligned}
\mathcal{T} = &~ \left\{ X\in \mathbb{R}^{m\times n}~:~ AXB = S, \; \bm{\alpha} - X\bm{1}_n \in \mathcal{K}_r,\; \bm{\beta} - X^\top \bm{1}_m \in \mathcal{K}_c,\; X\geq 0\right\} \\
= &~ \left\{X\in \mathbb{R}^{m\times n}~:~\bm{b}_l\leq \mathcal{A}(X) \leq \bm{b}_u,\; X\geq 0\right\},
\end{aligned}
\end{equation*}
where $\mathcal{A}: \mathbb{R}^{m\times n}\to \mathbb{R}^{\tilde{m}\tilde{n} + m + n}$ is a linear mapping and $\bm{b}_l,\,\bm{b}_u\in \mathbb{R}^{\tilde{m}\tilde{n} + m + n}$ are two vectors that can be constructed easily from the problem data. Then, we introduce some slack variables $r,s\in \mathbb{R}$ and $\bm{t}\in \mathbb{R}^{|\mathcal{G}|}$ which are used to majorize the objective function. Specifically, we shall replace the term $\frac{1}{2}\lambda_2\norm{X}_F^2$ with $\lambda_2s$ together with the constraints $\norm{X}_F^2 \leq 2rs,\; r = 1$, and the term $\lambda_1\sum_{G\in \mathcal{G}}\omega_G\norm{\bm{x}_G}$ with $\lambda_1\inner{\bm{\omega},\bm{t}}$ together with the constraints $\norm{\bm{x}_G} \leq t_G$ for all $G\in \mathcal{G}$, where $\bm{\omega}\in \mathbb{R}^{|\mathcal{G}|}$ is the vector storing all weights of the partition $\mathcal{G}$. Let $d>0$ be any positive integer, we denote the second-order cone in $\mathbb{R}^{d+1}$ as $\mathcal{Q}^{d+1}:= \left\{(x_0, \bm{x}_t)\in \mathbb{R}^{d+1}\,:\, x_0\geq\norm{\bm{x}_t}\right\}$ and the rotated second-order cone in $\mathbb{R}^{d+2}$ as
\begin{equation*}
\mathcal{Q}^{d+2}_r
:= \left\{(x_1, x_2, \bm{z}) \in \mathbb{R}^{d+2}\,:\, 2x_1x_2 \geq \norm{\bm{z}}^2,\; x_1\geq 0,\;x_2\geq 0\right\}.
\end{equation*}
Using the above notation, we see that  \eqref{eq-regOTpro} can be reformulated as the following SOCP problem:
\begin{equation*}
\begin{aligned}
\min_{X\in\mathbb{R}^{m\times n}, \,r \in
\mathbb{R}, \,s\in \mathbb{R}, \,\bm{t}\in \mathbb{R}^{|\mathcal{G}|}}
& ~~ \inner{C, X} + \lambda_1\inner{\bm{\omega},\bm{t}} + \lambda_2s \\
\mathrm{s.t.} \hspace{1.55cm}
&~~~ \bm{b}_l\leq
\mathcal{A}(X) \leq \bm{b}_u,\; r=1,\;  X\geq 0, \;r \geq0,\; s\geq 0,\; \bm{t} \geq 0,\\
&~~~ (r,s,\mathrm{vec}(X))\in
\mathcal{Q}_r^{mn+2},\; (t_G,\bm{x}_G) \in \mathcal{Q}^{|G|+1},\; \forall G\in \mathcal{G}.
\end{aligned}
\end{equation*}

\bibliographystyle{plain}
\bibliography{Ref_regOT}

\end{document}